\newtheorem{thm}{Theorem}[section]
\newtheorem{lem}[thm]{Lemma}
\newtheorem{cor}[thm]{Corollary}
\theoremstyle{definition}
\theoremstyle{remark}
\numberwithin{equation}{section}
\newcommand{\R}{\mathbf{R}}  
\newcommand{\N}{\mathbf{N}}
\newcommand{\C}{\mathbf{C}}
\newcommand{\Z}{\mathbf{Z}}
\newcommand{\Mod}[1]{\ (\textup{mod}\ #1)}
\providecommand{\id}{\operatorname{1}}
\begin{document}


\title[Non-vanishing of automorphic $L$-functions]{Non-vanishing of automorphic $L$-functions of prime power level}


\author{Olga  Balkanova}
\address{Institute for Applied Mathematics of Russian Academy of Sciences,  Khabarovsk, Russia}
\email{olgabalkanova@gmail.com}

\author{Dmitry  Frolenkov}
\address{Steklov Mathematical Institute of Russian Academy of Sciences, Moscow, Russia}
\email{frolenkov@mi.ras.ru}
\begin{abstract}
 Iwaniec and Sarnak  showed that at the minimum $25\%$ of $L$-values  associated to holomorphic newforms of fixed even integral weight and level $N \rightarrow \infty$ do not vanish at the critical point when $N$ is square-free and $\phi(N)\sim N$. In this paper we extend the given result to the case of prime power level $N=p^{\nu}$, $\nu\geq 2$.
\end{abstract}

\keywords{primitive forms, non-vanishing, prime power level}
\subjclass[2010]{Primary: 11F12}

\maketitle

\tableofcontents

\section{Introduction}
Central values of $L$-functions associated to different families is an important subject in analytic number theory with numerous applications.
In this paper we consider the family $H^{*}_{2k}(N)$ of primitive newforms of level $N$ and weight $2k$ equipped with the Petersson inner product
\begin{equation} \label{Pet}
\langle f,g\rangle_N:=\int_{F_0(N)}f(z)\overline{g(z)}y^{k}\frac{dxdy}{y^2}.
\end{equation}
Here $F_0(N)$ is a fundamental domain of the action of the Hecke congruence
subgroup $\Gamma_0(N)$ on the upper-half plane $$\mathbb{H}=\{z \in \C: \Im{z}>0\}.$$

 The main advantage of considering primitive forms $f \in H_{2k}^{*}(N)$ is that the  coefficients appearing in the Fourier expansion
\begin{equation}
f(z)=\sum_{n\geq 1}\lambda_f(n)n^{(2k-1)/2}e(nz)
\end{equation}
 are multiplicative
\begin{equation}\label{multiplicity}
\lambda_f(m)\lambda_f(n)=\sum_{\substack{d|(m,n)\\(d,N)=1 }}\lambda_{f}\left( \frac{mn}{d^2}\right).
\end{equation}

For  each $f \in H^{*}_{2k}(N)$ we associate an $L$-function
\begin{equation}L_f(s)=\sum_{n \geq 1}\frac{\lambda_f(n)}{n^s}, \quad \Re{s}>1
\end{equation}
such  that the completed $L$-function
\begin{equation}
\Lambda_f(s)=\left(\frac{\sqrt{N}}{2 \pi}\right)^s\Gamma\left(s+\frac{2k-1}{2}\right)L_f(s)
\end{equation} satisfies the functional equation
\begin{equation} \label{eq: functionalE}
\Lambda_f(s)=\epsilon_f\Lambda_f(1-s), \quad \epsilon_f=\pm1
\end{equation}
and can be analytically continued on the whole complex plane.

The point $s=1/2$ is the symmetry point of functional equation \eqref{eq: functionalE}.
For $N$ large  enough the harmonic proportion of non-vanishing is defined by
 \begin{equation}\label{pn}
PN:=\sum_{\substack{f \in H_{2k}^{*}(N)\\ L_f(1/2)\neq 0}}^{h}1
\geq c-\epsilon, \quad \epsilon>0.
\end{equation}
Subscript $h$ indicates that the average is taken with the weight\footnote{ A method of removing the harmonic weight is described in \cite{KM}.}
$$\frac{\Gamma(2k-1)}{(4\pi)^{2k-1} \langle f,f\rangle_N}.$$

Brumer's conjecture predicts that $c=1/2$ as $N \rightarrow \infty$.
This is still an open problem but there are some remarkable results towards its solution:
\begin{itemize}
 \item
Duke \cite{Duk}: $c=A/\log^2{N}$, $N$ is prime, $A$ is a positive constant;

\item
Vanderkam \cite{Van}: $c=1/48$, $N$ is prime;
\item
Kowalski-Michel \cite{KM}: $c=1/6$, $N$ is prime;

\item
Iwaniec-Sarnak \cite{IS}: $c=1/4$, $N$ is square-free and $\phi(N)\sim N$;
\item
Rouymi \cite{R2}: c$=\frac{p-1}{p}1/6$, $N=p^{\nu}$, $p$ is a fixed prime, $\nu \rightarrow \infty$.

\end{itemize}
The value  $c=1/4$ is a natural barrier. Iwaniec and Sarnak \cite{IS} showed that if inequality \eqref{pn} holds for some $c>1/4$ with an additional lower bound $$L(1/2,f)\geq 1/(\log{N})^2,$$ there are no Landau-Siegel zeros
for Dirichlet $L$-functions of real primitive characters.

The aim of the present paper is to improve the results of Rouymi by proving a lower bound on the proportion of non-vanishing central values of $L$-function of level $N=p^{\nu},$ $\nu \geq 2$ which corresponds to the one obtained by Iwaniec and Sarnak \cite{IS} in case of square-free level. Our main result is the following.
\begin{thm}\label{thm:main}
Let $N=p^{\nu}$, $p$ is a fixed prime. For any $\epsilon>0$
 \begin{equation}\sum_{\substack{f \in H_{2k}^{*}(N)\\ L_f(1/2)\neq 0}}^{h}1 \geq \frac{\phi(N)}{N}\frac{1}{4}-\epsilon, \quad \text{as } \nu \rightarrow \infty. 
\end{equation}
Let $N=p^{\nu}$, $\nu \geq 2$ is fixed. For any $\epsilon>0$
\begin{equation}\label{secondcase}\sum_{\substack{f \in H_{2k}^{*}(N)\\ L_f(1/2)\neq 0}}^{h}1 \geq \frac{1}{4}-\epsilon, \quad
\text{as } p \rightarrow \infty  \text{ over primes}.
\end{equation}
\end{thm}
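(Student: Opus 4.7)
Following Iwaniec and Sarnak, the plan is to apply the mollifier method. I would introduce a mollifier
\[
M_f = \sum_{\substack{\ell \leq y \\ (\ell,N)=1}} \frac{x_\ell \lambda_f(\ell)}{\sqrt{\ell}}
\]
of length $y = N^{\theta}$ with coefficients $x_\ell$ to be optimized, and apply Cauchy--Schwarz:
\[
\biggl( \sum_{f \in H_{2k}^*(N)}^h L_f(1/2) M_f \biggr)^{2} \leq PN \cdot \sum_{f \in H_{2k}^*(N)}^h |L_f(1/2) M_f|^2.
\]
This reduces the task to the precise evaluation of a mollified first moment $\mathcal{M}_1$ and a mollified second moment $\mathcal{M}_2$, followed by optimization over the coefficients $x_\ell$.

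The workhorse for both moments is the Petersson trace formula, which delivers harmonic averages over an orthonormal basis of the entire cusp form space $S_{2k}(N)$. For square-free level, projection onto the newspace is a standard Möbius-type sieve; for $N = p^\nu$ with $\nu \geq 2$, I would instead invoke the explicit formula of Rouymi \cite{R2} expressing the newform harmonic sum as a linear combination of harmonic sums over $S_{2k}(p^j)$ for $j \leq \nu$. This is the technical feature that distinguishes our setting from the square-free case of \cite{IS}. Substituting the approximate functional equation for $L_f(1/2)$ (respectively $|L_f(1/2)|^2$) and then applying Petersson's formula yields in each case a diagonal main term plus an off-diagonal Kloosterman--Bessel contribution.

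The diagonal part produces quadratic forms in the coefficients $x_\ell$ whose arithmetic factors involve multiplicative functions closely linked to $\phi(N)/N$. Solving the associated variational problem along the lines of \cite{IS}, the ratio $\mathcal{M}_1^2/\mathcal{M}_2$ takes the shape $(\phi(N)/N) \cdot g(\theta)$ with $g(\theta) \to 1/4$ as $\theta$ approaches the limit of its admissible range. This immediately gives the constant $1/4$ in the regime $p \to \infty$ with $\nu$ fixed (since then $\phi(N)/N \to 1$), and the constant $\phi(N)/N \cdot 1/4 = (1-1/p)/4$ in the regime $\nu \to \infty$ with $p$ fixed.

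The main obstacle will be the off-diagonal analysis. One must show that the Kloosterman-sum contribution from Petersson's formula, combined with the oldform sieve weights specific to prime power level, is negligible for mollifier length as large as $y = N^\theta$. This requires Weil's bound for Kloosterman sums together with careful control of the relevant Bessel transforms, exploiting the divisibility of the Petersson moduli by $p^\nu$. A secondary technical challenge is the uniform treatment of the two limiting regimes, since the explicit $p$- and $\nu$-dependence of Rouymi's sieve coefficients must be tracked carefully in order to recover the precise leading constants $\phi(N)/N \cdot 1/4$ and $1/4$ in the two cases.
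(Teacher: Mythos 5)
Your high-level skeleton (mollifier, Cauchy--Schwarz, twisted first and second moments, Petersson plus Rouymi's old/new decomposition, optimization of the quadratic forms) matches the paper's Section 6, and your reading of how the constants $\phi(N)/N\cdot\tfrac14$ and $\tfrac14$ emerge in the two regimes is correct. However, there is a genuine gap at the exact point where the paper's work lies: you assert that the ratio tends to $\tfrac14$ ``as $\theta$ approaches the limit of its admissible range,'' but with the route you describe --- approximate functional equation, Petersson, Weil's bound for the Kloosterman sums, trivial summation of the Bessel tails --- the admissible range for the mollified second moment is only $\theta<1/4$ (Rouymi's Lemma~\ref{lem:R} has error $O(\sqrt{l}(\log N)^4/\sqrt N)$ in the twist $l$, so the mollified error is $\asymp L^2/\sqrt N$), and $g(\theta)=\theta/(1+2\theta)$ then caps at $1/6$, i.e.\ you recover Theorem~\ref{Royumi theorem}, not Theorem~\ref{thm:main}. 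To reach $1/4$ one must push the mollifier length to $N^{1/2-\epsilon}$, which requires twisted-moment asymptotics with error terms of size roughly $\sqrt{l}\,N^{-1+\epsilon}$ --- nontrivial for $l$ well beyond $N^{1/2}$, indeed beyond $N$ for the first moment. Your proposal contains no mechanism for this saving; naming the off-diagonal analysis as ``the main obstacle'' locates the difficulty but does not resolve it, and the claimed conclusion is circular without it.

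The paper obtains these sharper inputs by a different method than the one you propose: it explicitly avoids the approximate functional equation for the first moment, instead deriving an exact formula for the shifted moment $M_1(l,u,v)$ by opening the Kloosterman sums, applying the functional equation of the Lerch zeta function and the Mellin--Barnes representation of $J_{2k-1}$, and then continuing analytically to the central point (Sections~\ref{section3}--\ref{section5}, with a separate treatment of $\nu=2$ where an extra polar term appears); the second moment with error $O_\epsilon((lNT)^\epsilon\sqrt{l}T/N)$ is imported from the companion paper \cite{BalFrol} via Kuznetsov--Bykovskii techniques (Theorem~\ref{thm:main2}). A further point you underestimate: in the regime $p\to\infty$ with $\nu$ fixed, the trace-formula error acquires a factor $\sqrt p$ (Lemma~\ref{tracebound}), so the classical approach gives $\Omega<1/4-1/(4\nu)$ and fails entirely for $\nu=2$; the uniformity in $p$ is therefore not a ``secondary technical challenge'' but is inseparable from the main one. (Minor: for $\nu\ge2$ Rouymi's decomposition involves only the two levels $p^\nu$ and $p^{\nu-1}$, not all $p^j$, $j\le\nu$.)
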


The proof is based on  asymptotic evaluation of twisted moments
\begin{equation}
M_1(l,u,v)=\sum_{f \in H_{2k}^{*}(N)}^{h}\lambda_f(l)L_f(1/2+u+v),
\end{equation}
\begin{equation}
M_2(l,u,v)=\sum_{f \in H_{2k}^{*}(N)}^{h}\lambda_f(l)L_f(1/2+u+v)L_f(1/2+u-v).
\end{equation}

The previously known results for prime power level are due to Rouymi.
\begin{lem}\label{lem:R}(lemmas 4 and 5 of \cite{R2})
Let  $k\geq 1$, $N=p^{\nu}$, $p$ is prime,  $\nu \geq 3$, $(l,p)=1$. Then for all $1\leq l \leq N$ and $\epsilon>0$  we have
\begin{equation}\label{m1rouymi}
M_1(l,0,0)=\frac{1}{\sqrt{l}}\frac{\phi(N)}{N}+O_{k,p}\left(\frac{\sqrt{l}\log{^2}(2l)}{N^{1/4-\epsilon}} \right);
\end{equation}
\begin{multline}\label{m2rouymi}
M_2(l,0,0)=\frac{\tau(l)}{\sqrt{l}}\left( \frac{\phi(N)}{N}\right)^2\times \\ \times
\left( \log{\left( \frac{N}{4\pi^2l}\right)}+2 \left( \frac{\log{p}}{p-1}+\frac{\Gamma'}{\Gamma}(k/2)+\gamma\right)\right)+O_{k,p}\left(\sqrt{l}\frac{(\log{N})^4}{\sqrt{N}}\right).
\end{multline}
\end{lem}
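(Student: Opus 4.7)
Both asymptotics follow the classical approximate functional equation (AFE) plus Petersson recipe, specialized to the newform basis at prime-power level. In each case one expands $L_f$ as a short Dirichlet series in $\lambda_f(n)$, collapses the product $\lambda_f(l)\lambda_f(n)$ by the multiplicativity rule \eqref{multiplicity}, and then averages over $f\in H_{2k}^{*}(p^\nu)$ by means of a Petersson trace formula restricted to newforms. The Kronecker-delta piece on the spectral side produces the main term, while the Kloosterman piece, controlled by Weil's bound, produces the error.

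For $M_1(l,0,0)$ one uses the AFE
$$L_f(1/2) = 2\sum_{n\geq 1}\frac{\lambda_f(n)}{\sqrt{n}}\,W\!\left(\frac{2\pi n}{\sqrt{N}}\right),$$
with $W$ the Mellin--Barnes cut-off built from the Gamma factor in \eqref{eq: functionalE} that decays rapidly at infinity and satisfies $W(0^{+})=\tfrac12$. Substituting, applying \eqref{multiplicity}, and swapping the $f$- and $(n,d)$-summations gives
$$M_1(l,0,0) = 2\sum_{\substack{d\mid l\\(d,p)=1}}\sum_{n\geq 1}\frac{W(\cdots)}{\sqrt{n}}\,\Delta^{*}_{N}\!\left(\frac{ln}{d^{2}},1\right),$$
where $\Delta^{*}_{N}(m,1)$ denotes the harmonic delta on $H_{2k}^{*}(N)$. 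Under $(l,p)=1$ the diagonal condition $ln/d^{2}=1$ forces $d=l=n$ and yields the main term $\phi(N)/(N\sqrt{l})$ after shifting the Mellin contour through $s=0$. The off-diagonal, a Bessel transform of Weil-type estimates of $S(m,1;cN)$, produces the error $\sqrt{l}\log^{2}(2l)/N^{1/4-\epsilon}$.

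For $M_2(l,0,0)$ one instead needs the double AFE, expressing $L_f(1/2+u+v)L_f(1/2+u-v)$ as a double Dirichlet series in $(m,n)$ with smooth cut-off $W_{u,v}(mn/N)$ plus its $(u,v)\mapsto(-u,-v)$ symmetric counterpart coming from the root number. Applying \eqref{multiplicity} first to $\lambda_f(l)\lambda_f(m)$ and then to $\lambda_f(lm/d^{2})\lambda_f(n)$ collapses the triple Hecke product, and the newform Petersson formula leaves a diagonal of the form $lm=nd^{2}e^{2}$ with $(de,p)=1$. Shifting the $(u,v)$-contours to the origin crosses a double pole of the resulting zeta-type double sum; the residue furnishes $\log(N/4\pi^{2}l)$ together with the constants $2\gamma$ (from the zeta residue), $2(\Gamma'/\Gamma)(k/2)$ (from the Gamma factors of $W_{u,v}$), and $2\log p/(p-1)$ (from the Euler factor at $p$ enforcing $(mn,p)=1$). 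The Kloosterman off-diagonal is again bounded by Weil and gives $\sqrt{l}(\log N)^{4}/\sqrt{N}$.

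The principal obstacle --- and the only place where the hypothesis $\nu\geq 3$ really bites --- is the construction of the newform-restricted Petersson trace formula at level $p^\nu$. In the squarefree regime of Iwaniec--Luo--Sarnak, the sieve removing oldforms is a Möbius inversion on divisors, because each oldform from a divisor $M\mid N$ appears with multiplicity one. For $N=p^\nu$, the oldform space obtained from $H_{2k}^{*}(p^\mu)$ with $\mu<\nu$ has dimension $\nu-\mu+1$, being spanned by $f(p^{j}z)$ for $0\leq j\leq\nu-\mu$, so the inversion becomes a local Gram-matrix problem at $p$ whose inverses eventually produce the $p$-dependent constant $\log p/(p-1)$ appearing in \eqref{m2rouymi}. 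Tracking this local contribution explicitly, and verifying that $(l,p)=1$ suffices to keep the resulting arithmetic sums in closed form, is the technical core of the argument.
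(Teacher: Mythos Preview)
This lemma is not proved in the paper at all; it is quoted verbatim from Rouymi \cite{R2} (his lemmas~4 and~5) as a summary of the previously known results that the present paper sets out to \emph{improve} (via Theorems~\ref{thm:main2}, \ref{thm:Bet} and~\ref{thm:BF}). There is therefore no proof here to compare your sketch against.

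That said, your outline is a reasonable description of the standard AFE-plus-Petersson approach and is presumably close to what Rouymi actually does. One point deserves correction: you present the newform sieve at level $p^{\nu}$ as a ``local Gram-matrix problem at $p$'' and call it the technical core. For the \emph{harmonically weighted} average the sieve in fact collapses to the simple two-term identity recorded in the paper as Theorem~\ref{PetRou}: for $\nu\ge 3$ and $(mn,p)=1$,
\[
\Delta^{*}_{2k,N}(m,n)=\Delta_{2k,N}(m,n)-\tfrac{1}{p}\,\Delta_{2k,N/p}(m,n),
\]
so no Gram matrix inversion is required. The constant $\log p/(p-1)$ in \eqref{m2rouymi} comes, as you yourself note a few lines earlier, from the Euler factor at $p$ enforcing $(mn,p)=1$, not from the oldform sieve. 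The hypothesis $\nu\ge 3$ is used only to ensure that $N/p$ is still divisible by $p^{2}$, so that the Kloosterman sums with one argument divisible by $p$ vanish by Lemma~\ref{Royer}; this is what makes the off-diagonal tractable in Rouymi's range and is also what the paper exploits in Lemma~\ref{lemS2=0}.
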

Using lemma \ref{lem:R} and the technique of mollification Rouymi obtained a lower bound on the proportion of non-vanishing.
\begin{thm}\label{Royumi theorem}(theorem $1$ of \cite{R2})
Let $k \geq 1$ and $p$ be a fixed prime number. For all $\epsilon>0$, $\Omega<1/4$ there is a constant $\nu_0=\nu_0(p,k,\epsilon,\Omega)$ such that for $\nu\geq \nu_0$ and $N=p^{\nu}$ we have
\begin{equation}\label{proportion}
PN=\sum_{\substack{f \in H_{2k}^{*}(N)\\ L_f(1/2)\neq 0}}^{h}1\geq  \frac{p-1}{p}\frac{\Omega}{1+2\Omega}-\epsilon.
\end{equation}
\end{thm}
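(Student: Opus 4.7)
The plan is to apply the mollification method of Iwaniec--Sarnak, feeding in the asymptotic formulas \eqref{m1rouymi} and \eqref{m2rouymi} as the analytic input. Introduce the mollifier
$$M_f := \sum_{\substack{l \leq L \\ (l,N)=1}} \frac{x_l \lambda_f(l)}{\sqrt{l}}, \qquad x_l := \mu(l)\, P\!\left(\frac{\log(L/l)}{\log L}\right),$$
where $L = N^\Omega$ with $\Omega < 1/4$ and $P$ is a polynomial (to be optimized) satisfying $P(0)=0$. Since $L_f(1/2)M_f$ vanishes whenever $L_f(1/2)$ does, Cauchy--Schwarz applied to the harmonic-weighted sum gives
$$\left(\sum_{f \in H^{*}_{2k}(N)}^{h} L_f(1/2)\, M_f\right)^{2} \leq PN \cdot \sum_{f \in H^{*}_{2k}(N)}^{h} \bigl(L_f(1/2)\, M_f\bigr)^{2}.$$

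Evaluate the first moment by inserting \eqref{m1rouymi}:
$$\sum_{f}^{h} L_f(1/2)\, M_f = \frac{\phi(N)}{N} \sum_{\substack{l \leq L \\ (l,N)=1}} \frac{x_l}{l} + O_{k,p}\!\left(\frac{L^{1/2}\log^{2} L}{N^{1/4-\epsilon}} \sum_l |x_l|\right).$$
The hypothesis $\Omega < 1/4$ renders the error negligible. The main term is evaluated by contour integration against the Dirichlet series $\sum_{(l,N)=1} \mu(l) l^{-s} = \zeta(s)^{-1}\prod_{q \mid N}(1 - q^{-s})^{-1}$, producing a size $\asymp (N/\phi(N)) \cdot P'(0)/\log L$. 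Next, expand the second moment via multiplicativity \eqref{multiplicity} into
$$\sum_{\substack{l,m \leq L \\ (lm,N)=1}} \frac{x_l x_m}{\sqrt{lm}} \sum_{d \mid (l,m)} M_2\!\left(\frac{lm}{d^{2}},\,0,\,0\right),$$
and insert \eqref{m2rouymi}; this reduces the second moment to a quadratic form in the $x_l$ of the shape $(\phi(N)/N)^{2}\bigl(\alpha \log N + \beta\bigr)$, with coefficients again accessible through the same Dirichlet series after differentiation against $P$.

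Substituting both evaluations into the Cauchy--Schwarz bound yields
$$PN \geq \frac{\phi(N)}{N} \cdot \frac{\mathcal{L}(P)^{2}}{\mathcal{Q}(P,\Omega)} - \epsilon$$
for an explicit linear functional $\mathcal{L}$ and quadratic form $\mathcal{Q}$ in $P$. The variational optimization (or equivalently the linear test function $P(x) = x$) evaluates $\mathcal{L}^{2}/\mathcal{Q}$ to $\Omega/(1+2\Omega)$, and since $\phi(N)/N = (p-1)/p$ for $N = p^\nu$, the theorem follows by taking $\Omega$ arbitrarily close to $1/4$.

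\textbf{Main obstacle.} The principal technical challenge is the clean evaluation of the double arithmetic sum arising from the second moment: one must extract the local Euler factor at the ramified prime $p$ responsible for the $(p-1)/p$ factor, and handle the near-diagonal contribution coming from $d \mid (l,m)$ together with the coprimality condition $(lm, N) = 1$. The ceiling $\Omega < 1/4$ is imposed directly by the error term $O(l^{1/2} N^{-1/4+\epsilon})$ in \eqref{m1rouymi}; breaking past this barrier to reach proportion $1/4$ is precisely the improvement required for Theorem \ref{thm:main}.
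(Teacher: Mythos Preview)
The paper does not actually prove this theorem; it is quoted from Rouymi \cite{R2}, with the only indication of the argument being the sentence ``Using lemma \ref{lem:R} and the technique of mollification Rouymi obtained\ldots'' and the short sketch in Section \ref{section6} (where the same scheme is run with the sharper moment estimates to prove Theorem \ref{thm:main}). Your proposal is correct and follows precisely this route: mollify, apply Cauchy--Schwarz, feed in \eqref{m1rouymi} and \eqref{m2rouymi}, and optimize.

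One cosmetic difference worth noting: you parametrize the mollifier in the Iwaniec--Sarnak fashion, with a smoothing polynomial $P$ to be optimized by calculus of variations, whereas Rouymi (and the paper, in Section \ref{section6}) uses the Kowalski--Michel style coefficients $x_l=\sum_{lm\le L}(\mu*\mu)(m)m^{-1}y_{lm}$ with $y_n=n\mu(n)/\phi(n)$ for $(n,p)=1$. The latter diagonalizes the quadratic form $M_2^h$ directly; both reach the same optimum $\Omega/(1+2\Omega)$. A small bookkeeping slip: in your first-moment error the $\sqrt{l}$ from \eqref{m1rouymi} cancels the $1/\sqrt{l}$ in the mollifier, so the bound is $N^{-1/4+\epsilon}\sum_l|x_l|\log^2(2l)\ll L^{1+\epsilon}N^{-1/4+\epsilon}$ without your extra factor $L^{1/2}$; this is what actually pins the admissible range at $\Omega<1/4$ rather than $1/6$.
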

The parameter $L:=N^{\Omega}$ is called the length of mollifier and $\Omega$ is called the logarithmic length of mollifier.  As $v \rightarrow \infty $ one can take $\Omega=1/4-\epsilon$ in \eqref{proportion} so that $$PN \geq \frac{p-1}{p}\frac{1}{6}-\epsilon.$$
In his papers Rouymi didn't consider the opposite case: $N=p^{\nu}$, $\nu$ is fixed and $p\rightarrow \infty$ over primes.
However, using lemma \ref{tracebound}, it is possible to obtain asymptotic formulas
for \eqref{m1rouymi} and \eqref{m2rouymi} that are uniform in both parameters $p$ and $\nu$.
\begin{lem}\label{tracebound} (theorem $2.2$ of \cite{R0}, page $23$)
Let $k\geq 1$, $N=p^{\nu}$, $\nu \geq 2$, $(p,mn)=1$. Then
\begin{equation*}
\sum_{f \in H^{*}_{2k}(N)}\lambda_f(n)\lambda_f(n)=\phi_{\nu}(N)\delta_{m,n}+
O\left( \frac{\sqrt{mnp}\{\log{(2(m,n))}\}^2}{k^{4/3}N^{3/2}}\right),
\end{equation*}
where
\begin{equation}
\phi_{\nu}(N)=
\left\{
              \begin{array}{ll}
                1-p^{-1}, & \hbox{if $N=p^{\nu},\,\nu\geq3$;} \\
                1-(p-p^{-1})^{-1}, & \hbox{if $N=p^{\nu},\,\nu=2$.}
              \end{array}
\right.
\end{equation}
\end{lem}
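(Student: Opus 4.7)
The plan is to derive this as a restriction of Petersson's trace formula to the newspace. For each divisor $p^\mu$ of $N = p^\nu$ with $0 \leq \mu \leq \nu$, the Petersson trace formula over an orthonormal basis of $S_{2k}(\Gamma_0(p^\mu))$ with harmonic weight gives
\begin{equation*}
\Delta_{p^\mu}(m,n) = \delta_{m,n} + 2\pi (-1)^{k} \sum_{\substack{c>0 \\ p^\mu \mid c}} \frac{S(m,n;c)}{c}\, J_{2k-1}\!\left(\frac{4\pi\sqrt{mn}}{c}\right).
\end{equation*}
Since $N$ is a prime power, the Atkin--Lehner decomposition of $S_{2k}(\Gamma_0(p^\nu))$ into newforms and oldforms is transparent: every oldform at level $p^\nu$ is a translate $g(p^j z)$ of a unique newform $g \in H^*_{2k}(p^\mu)$ with $\mu \leq \nu-1$ and $0 \leq j \leq \nu-\mu$. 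Diagonalizing the Gram matrix of these translates for each fixed $g$ yields, after projecting onto the newspace, an identity of the form
\begin{equation*}
\sum_{f \in H^*_{2k}(p^\nu)}{}^{h} \lambda_f(m)\lambda_f(n) = \sum_{\mu=0}^{\nu} \alpha_{\nu,\mu}(p)\,\Delta_{p^\mu}(m,n),
\end{equation*}
where the coefficients $\alpha_{\nu,\mu}(p)$ are explicit rational functions of $p$ arising from the norm ratios $\langle g|_d, g|_d\rangle / \langle g, g\rangle$ and the inverse of the Gram matrix.

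To extract the diagonal, assemble the $\delta_{m,n}$ terms from each level: using $(p,mn) = 1$ (so $\delta_{m,n}$ is unaffected by the dropdown to sublevels) one computes $\sum_{\mu} \alpha_{\nu,\mu}(p) = \phi_\nu(N)$ by direct evaluation of the geometric sum that appears from the Gram-matrix inverse. The split $\nu = 2$ versus $\nu \geq 3$ is exactly the point at which this finite geometric sum stabilizes, producing $1 - p^{-1}$ in the long tail and the modified factor $1 - (p - p^{-1})^{-1}$ when only one nontrivial translate is available. This step is essentially bookkeeping once the oldform decomposition is pinned down.

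The off-diagonal contribution is handled term by term. For each level $p^\mu$ appearing in the inversion, combine Weil's bound $|S(m,n;c)| \leq \tau(c)\, (m,n,c)^{1/2}\, c^{1/2}$ with the uniform Bessel estimate $J_{2k-1}(x) \ll x/k^{4/3}$ (valid throughout the relevant range), obtaining
\begin{equation*}
\sum_{\substack{c>0 \\ p^\mu \mid c}} \frac{|S(m,n;c)|}{c}\, \left|J_{2k-1}\!\left(\frac{4\pi\sqrt{mn}}{c}\right)\right| \ll \frac{\sqrt{mn}}{k^{4/3}} \sum_{\substack{c>0 \\ p^\mu \mid c}} \frac{\tau(c)\,(m,n,c)^{1/2}}{c^{3/2}}.
\end{equation*}
The smallest modulus dominates: the $\mu = \nu$ contribution produces $\sqrt{mn}/(k^{4/3} N^{3/2})$ (using $(m,n,N)=1$), while the $\mu = \nu - 1$ contribution picks up an extra $\sqrt{p}$ from the modulus $c = p^{\nu-1}$ combined with the coefficient $\alpha_{\nu,\nu-1}(p)$, and a $\{\log(2(m,n))\}^2$ factor from summing the divisor-type tail $\sum_{r \geq 1} \tau(p^{\nu-1}r)(m,n,r)^{1/2}/r^{3/2}$. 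Combining yields the stated error.

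The main obstacle is carrying out the newform projection explicitly: one must identify the coefficients $\alpha_{\nu,\mu}(p)$ of the linear combination, verify that they aggregate to the piecewise formula for $\phi_\nu(N)$, and track how the smaller sublevels $p^\mu$ feed into the off-diagonal so that the modulus saving $c^{-3/2}$ is realized with the correct $\sqrt{p}$ loss. Once the Atkin--Lehner Gram matrix at prime power level is inverted, everything reduces to the Weil--Bessel bookkeeping sketched above.
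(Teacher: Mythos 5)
Your overall strategy is the right one and is essentially how the cited source (Rouymi's thesis) proceeds: reduce the newform harmonic average to full-basis Petersson averages at sublevels of $p^{\nu}$, read off the diagonal, and bound the off-diagonal by Weil's bound together with the uniform Bessel estimate $J_{2k-1}(x)\ll x k^{-4/3}$. Note that the present paper does not reprove this lemma --- it imports it --- but the decomposition you propose to obtain by inverting the Atkin--Lehner Gram matrix is recorded in the paper as Theorem \ref{PetRou}: for $(mn,N)=1$ one has $\Delta^{*}_{2k,N}(m,n)=\Delta_{2k,N}(m,n)-c_{\nu}\,\Delta_{2k,N/p}(m,n)$ with $c_{\nu}=1/p$ for $\nu\geq 3$ and $c_{\nu}=1/(p-p^{-1})$ for $\nu=2$; that is, only the two top sublevels occur, and the diagonal immediately gives $1-c_{\nu}=\phi_{\nu}(N)$.

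The genuine gap is that your error analysis silently assumes exactly this two-term collapse. You write the projection as $\sum_{\mu=0}^{\nu}\alpha_{\nu,\mu}(p)\Delta_{p^{\mu}}(m,n)$ but then estimate only the $\mu=\nu$ and $\mu=\nu-1$ contributions, invoking ``the smallest modulus dominates.'' As a cross-level principle this is backwards: the Kloosterman--Bessel sum over $c\equiv 0\ (\mathrm{mod}\ p^{\mu})$ \emph{grows} as $\mu$ decreases, and the $\mu=0$ term alone is of size $\sqrt{mn}\,k^{-4/3}$ with no saving in $N$ whatsoever. Hence unless $\alpha_{\nu,\mu}=0$ for all $\mu\leq\nu-2$ (or decays at least like $p^{-3(\nu-\mu)/2}$), the claimed $N^{-3/2}$ saving is destroyed. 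So the step you defer as ``bookkeeping'' --- the explicit inversion showing that for $(mn,p)=1$ every coefficient with $\mu\leq\nu-2$ vanishes --- is the load-bearing part of the proof, not an afterthought; it is precisely the content of Theorem \ref{PetRou}. Once that is supplied, your remaining computation is sound: the $\mu=\nu$ level gives $\sqrt{mn}/(k^{4/3}N^{3/2})$, the $\mu=\nu-1$ level with its coefficient $c_{\nu}\asymp p^{-1}$ gives the extra $\sqrt{p}$ via $p^{-1}(N/p)^{-3/2}=\sqrt{p}\,N^{-3/2}$, and the divisor sum $\sum_{r}\tau(r)(m,n,r)^{1/2}r^{-3/2}$ produces the $\{\log(2(m,n))\}^{2}$ factor (up to the harmless $\tau(p^{\nu})\ll\nu$ that is standard to absorb).
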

This gives an extra factor $\sqrt{p}$ in the error terms for the first and second moments.
Therefore, one can take  $\Omega<1/4-1/(4\nu)$ for the second moment and $\Omega < 1/4-1/(2\nu)$ for the first moment.
This means that the length of mollifier is at most $N^{1/4-1/(2\nu)}$. As a consequence, theorem \ref{Royumi theorem} cannot be extended to the case $N=p^2.$ If $\nu>2$ we, for example, have
\begin{align*}
\nu=3, p \rightarrow \infty&& \Omega=1/12-\epsilon&& PN\geq 1/14-\epsilon;\\
\nu=4, p \rightarrow \infty&& \Omega=1/8-\epsilon&& PN\geq 1/10-\epsilon.
\end{align*}

To make the proportion of non-vanishing independent of $p$ and prove \eqref{secondcase}, we obtain estimates on the error terms for the first and second moments when $l>N$.
Then theorem \ref{thm:main} is proved by extending the length of mollifier up to $N$ for the first moment and up to $N^{1/2}$ for the second moment. Therefore, any $\Omega<1/2$ is admissible.

Asymptotic formula for the second moment, which is the hardest part of our arguments, is derived in \cite{BalFrol}. The proof is based on techniques developed by Kuznetsov and Bykovskii, see \cite{Byk, BF, BF2, Kuz}. Here we give a simplified version of the main theorem.
\begin{thm}\label{thm:main2}(theorem $1.1$ of \cite{BalFrol})
Let $k\geq 1 $, $p$ be a prime, $t \in \R$, $T:=3+|t|$, $N=p^{\nu}$  and $\nu \geq 2$.
If $p|l$ then $M_2(l,0,it)=0$. For $(l,p)=1$ we have
\begin{multline}\label{eq:mt}
M_2(l,0,it)=\frac{\phi(N)\phi_{\nu}(N)}{N}\frac{\tau_{it}(l)}{l^{1/2}}\times\\ \times \Biggl(
\log{N}+2\gamma-2\log{2\pi}+2\frac{\log{p}}{p-1}+\frac{\Gamma'}{\Gamma}(k+it)+\frac{\Gamma'}{\Gamma}(k-it)\Biggr)-\\
-\frac{\phi(N)\phi_{\nu}(N)}{N}\frac{1}{l^{1/2-it}}\sum_{d|l}\sum_{r|d}\mu(d/r)\tau_{it}(r)r^{-it}\log{r}+
O_{\epsilon}\left( (lNT)^{\epsilon}\frac{\sqrt{l}T}{N}\right),
\end{multline}
where
\begin{equation}
\tau_{it}(r)=\sum_{ab=r}\left(\frac{a}{b}\right)^{it}.
\end{equation}
\end{thm}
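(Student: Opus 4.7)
\textbf{Proof proposal for Theorem \ref{thm:main2}.}

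The plan is to reduce $M_2(l,0,it)$ to Petersson's trace formula via an approximate functional equation, with the main technical work being the analysis of the resulting off-diagonal Kloosterman sums at prime-power modulus.

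First, the vanishing when $p \mid l$ is automatic: for a newform of level $p^{\nu}$ with $\nu\geq 2$ the Atkin--Lehner theory forces $\lambda_f(p)=0$, so the multiplicativity relation \eqref{multiplicity} gives $\lambda_f(l)=0$ whenever $p\mid l$, and hence $M_2(l,0,it)=0$. Assume now $(l,p)=1$. I would start from an approximate functional equation for the product $L_f(1/2+it)L_f(1/2-it)=|L_f(1/2+it)|^2$, which after folding its self-dual piece expresses this product as
\[
2\sum_{m,n\geq 1}\frac{\lambda_f(m)\lambda_f(n)}{m^{1/2+it}n^{1/2-it}}\,V_{k,t}\!\left(\frac{mn}{N}\right),
\]
where $V_{k,t}$ is a smooth cutoff with rapid decay beyond $mn\gg NT^{O(1)}$ and with an explicit Mellin transform in terms of $\Gamma(k+it+s)\Gamma(k-it+s)$. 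Multiplying by $\lambda_f(l)$ and applying \eqref{multiplicity} (valid with $(d,N)=1$ since $(l,p)=1$) rewrites the sum as $\sum_{d\mid l}\sum_{m,n}(\cdots)\,\lambda_f(lm/d^2)\lambda_f(n)$.

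Next, I apply the Petersson trace formula at level $N=p^{\nu}$ in the newform-weighted form underlying Lemma \ref{tracebound}. The diagonal term $lm/d^2=n$ produces the main term: substituting it and then shifting the Mellin contour for $V_{k,t}$ from $\Re(s)=2$ down to $\Re(s)=-1/2$ picks up a double pole at $s=0$ coming from two $\zeta$-like factors built out of the $d$-sum and the remaining $m$-sum. Reading off the Laurent expansion at $s=0$ yields exactly the combination $\log N+2\gamma-2\log(2\pi)+2(\log p)/(p-1)+(\Gamma'/\Gamma)(k+it)+(\Gamma'/\Gamma)(k-it)$, together with the $\tau_{it}(l)$ numerator and the derivative piece $\sum_{d\mid l}\sum_{r\mid d}\mu(d/r)\tau_{it}(r)r^{-it}\log r$ coming from differentiating the finite $d$-sum in $s$. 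The overall factor $\phi(N)\phi_\nu(N)/N$ arises from the harmonic normalization together with the constant $\phi_\nu(N)$ in Lemma \ref{tracebound}.

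The main obstacle, and the reason the separate paper \cite{BalFrol} is needed, is the off-diagonal term: a sum over $c\equiv 0\Mod{N}$ of Kloosterman sums $S(lm/d^2,n;c)$ weighted by a Bessel function $J_{2k-1}(4\pi\sqrt{lmn/d^2}/c)$ and by $V_{k,t}(mn/N)$. A direct application of Weil's bound would give an error of order $(lNT)^{\epsilon}\sqrt{l}\sqrt{p}/\sqrt{N}$, and the parasitic $\sqrt{p}$ is exactly what blocks the uniformity in $p$ needed for \eqref{secondcase}. To achieve the stronger error $O_\epsilon((lNT)^{\epsilon}\sqrt{l}\,T/N)$ I would follow the Bykovskii--Kuznetsov strategy used in \cite{Byk,BF,BF2,Kuz}: open the Kloosterman sums via Kuznetsov's formula, converting the off-diagonal into a spectral sum over Maass and holomorphic cusp forms of level dividing $N$; then bound the spectral averages using convexity for the associated Rankin--Selberg $L$-values and the spectral large sieve. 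The prime-power structure allows one to factor $S(a,b;p^{\nu}c_0)$ into a $p$-part and a coprime part and evaluate the $p$-part explicitly, which is precisely where the spurious $\sqrt{p}$ is removed. Putting the diagonal main term together with this refined off-diagonal bound gives \eqref{eq:mt}.
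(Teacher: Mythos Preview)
This theorem is not proved in the present paper at all: the authors explicitly quote it from \cite{BalFrol} and only remark that the proof there ``is based on techniques developed by Kuznetsov and Bykovskii, see \cite{Byk, BF, BF2, Kuz}''. So the comparison is between your outline and the method of \cite{BalFrol} as described here and as mirrored in Sections~\ref{section4}--\ref{section5} for the first moment.

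Your route is genuinely different. You start from an approximate functional equation and Petersson, and then propose to treat the off-diagonal Kloosterman sums spectrally ``via Kuznetsov's formula''. The approach actually used in \cite{BalFrol}, and carried out in this paper for $M_1$, does \emph{not} pass through an approximate functional equation at all (the authors stress this in the introduction). Instead one works with the shifted moment $M_2(l,u,v)$ in a region of absolute convergence, applies Petersson to get sums of Kloosterman sums weighted by Bessel functions, replaces the Bessel function by its Mellin--Barnes integral, and converts the inner $n$-sum into Lerch/Hurwitz zeta functions whose functional equation produces an explicit dual expression (the analogue of $V_N$ and the integrals $I_{\pm}$ in Section~\ref{section3}); analytic continuation to $u=0$ then yields both the main terms and an error of size $\sqrt{l}/N$ directly. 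No spectral expansion of the off-diagonal is needed, and the removal of the parasitic $\sqrt{p}$ comes from the Lerch-zeta step rather than from factoring Kloosterman sums at prime-power modulus.

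There is also a gap in your description of the off-diagonal. What you call ``the Bykovskii--Kuznetsov strategy'' is not what \cite{Byk,Kuz} do: those papers give exact convolution/trace identities, not a spectral large-sieve bound on Kloosterman sums. Your actual proposal---apply the Kuznetsov formula to the $c$-sum, land on a spectral sum of shifted-convolution type, and bound it by convexity plus the large sieve---is a legitimate alternative strategy, but it is only sketched: you do not explain which transform of $J_{2k-1}$ enters, how the resulting Maass-form $L$-values are controlled uniformly in level $p^{\nu}$, or why the outcome is $\sqrt{l}\,T/N$ rather than $\sqrt{l}/\sqrt{N}$. The claim that ``factoring $S(a,b;p^{\nu}c_0)$ removes the spurious $\sqrt{p}$'' is exactly the point that needs a proof, and nothing in your outline supplies it. So as written the proposal identifies the correct main term mechanism but leaves the decisive error-term step unjustified, and it also misattributes the method of \cite{BalFrol}.
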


The second ingredient of our proof is the asymptotic evaluation of $M_1(l,0,it)$.
The first moment over the full  basis of cusp forms (or over the space of primitive forms of prime level and small weight) has been studied intensively in different aspects. See, for example,   \cite{Ak}, \cite{Bet},  \cite{Duk}, \cite{Ell},  \cite{JK},  and \cite{Kam}.
In case of prime power level  and weight $2$ the best known error term for  $M_1(l,0,it)$ with respect to parameters $N$, $T$ and $l$   is obtained in \cite{Bet}.
\begin{thm}\label{thm:Bet} (particular case of theorem $1.1$ of \cite{Bet})
Let $2k=2$,  $t \in \R$, $T=1+|t|$, $N=p^{\nu}$, $p$ is a prime such and $\nu \geq 2$.
If $p|l$ then $M_1(l,0,it)=0$.
For $(l,p)=1$ uniformly in $N$, $l$, $T$, $p$
 $$M_1(l,0,it)= \frac{\phi_{\nu}(N)}{l^{1/2+it}}+O_{\epsilon}\left(\sqrt{lT}N^{-1+\epsilon}\right).$$
\end{thm}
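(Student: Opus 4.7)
The plan is to derive the asymptotic by combining the approximate functional equation for $L_f(1/2+it)$ with the Petersson trace formula for newforms of level $N=p^\nu$.

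I would start with the smoothed approximate functional equation
\begin{equation*}
L_f(1/2+it) = \sum_{n \geq 1} \frac{\lambda_f(n)}{n^{1/2+it}} V_+\!\left(\frac{n}{\sqrt{N}}\right) + \epsilon_f\, \chi(t)\sum_{n \geq 1} \frac{\lambda_f(n)}{n^{1/2-it}} V_-\!\left(\frac{n}{\sqrt{N}}\right),
\end{equation*}
with smooth cutoffs $V_\pm$ effectively truncating at $n \ll \sqrt{N}\,T^{1+\epsilon}$ and a bounded phase $\chi(t)$. Multiplying by $\lambda_f(l)$, summing with the harmonic weight, and applying the Petersson formula (either the one for newforms of Lemma \ref{tracebound} or a sharper M\"obius-inverted version over the full basis), the diagonal $n=l$ contribution of the straight piece yields the main term $\phi_\nu(N)/l^{1/2+it}$ after a contour shift in the Mellin representation of $V_+$ replaces $V_+(l/\sqrt{N})$ by $1$. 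The off-diagonal contribution reduces to a sum of Kloosterman sums $S(l,n;c)$ with $c\equiv 0\pmod{N}$, weighted by Bessel functions $J_{2k-1}(4\pi\sqrt{ln}/c)$, and estimating these via Weil's bound together with the standard asymptotics of $J_{2k-1}$ yields the claimed error $O_\epsilon(\sqrt{lT}\,N^{-1+\epsilon})$.

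The dual piece $\epsilon_f\chi(t)\sum_n \lambda_f(n) V_-(n/\sqrt{N})/n^{1/2-it}$ is the technical heart of the argument. For newforms of level $N=p^\nu$, $\nu\geq 2$, Atkin--Lehner theory provides $\epsilon_f = i^{2k}\eta_f$ with $\eta_f\in\{\pm 1\}$ the Fricke eigenvalue of $f$ under $W_N$. Using $f|_{W_N} = \eta_f f$ one absorbs $\epsilon_f$ into the Fourier expansion and reduces $\sum_f^h \epsilon_f \lambda_f(l)\lambda_f(n)$ to another Petersson-type average amenable to the same analysis; the dual has no pole at $s=1/2+it$ for generic $t$, so only an error term arises. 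The vanishing $M_1(l,0,it)=0$ for $p\mid l$ is then trivial: for newforms of level $p^\nu$ with $\nu\geq 2$ the local representation at $p$ is supercuspidal or a fully ramified principal series, so the local $L$-factor at $p$ is trivial and $\lambda_f(p^j)=0$ for $j\geq 1$, whence $\lambda_f(l)=0$ whenever $p\mid l$ by Hecke multiplicativity.

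The main obstacle is extracting the sharp exponent $\sqrt{lT}/N$ in the off-diagonal error: the naive bound coming directly from Lemma \ref{tracebound} gives only $\sqrt{l}\,T/\sqrt{N}$, and the sharper bound requires exploiting simultaneously the Weil estimate for $S(l,n;c)$ and the large-$c$ decay of $J_{2k-1}$, via a delicate M\"obius inversion on the $\nu+1$ divisors of $N=p^\nu$, while maintaining uniformity in $l$, $T$, $N$, and $p$.
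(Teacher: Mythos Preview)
The paper does not contain a proof of Theorem~\ref{thm:Bet}. The statement is explicitly labeled ``particular case of theorem~1.1 of~\cite{Bet}'' and is quoted from Bettin's preprint; the acknowledgments thank Bettin for extending his result to prime power level for this very application. So there is no ``paper's own proof'' to compare against here.

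What the paper does prove is the companion result for $2k\geq 4$, namely Theorem~\ref{thm:BF}, and its method is \emph{not} the approximate functional equation you propose. The paper works in the region $\Re u>3/4$ where the Dirichlet series for $L_f$ converges absolutely, applies the Petersson formula (in the form of Theorem~\ref{PetRou}) directly to the resulting $n$-sum, replaces $J_{2k-1}$ by its Mellin--Barnes integral, and then uses the functional equation of the Lerch zeta function to transform the Kloosterman--Bessel term into the explicit integrals $I_{\pm}(u,v,k,x)$ of Section~\ref{section3}. Analytic continuation to $u=0$ yields the main term and the error $V_N(0,v,k)$, estimated in Lemmas~\ref{lem:Ilargex} and~\ref{lem:Ismallx}. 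The authors stress (end of the introduction) that this approach deliberately avoids the approximate functional equation.

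Your outline is a plausible route to the result, and the vanishing argument for $p\mid l$ is correct. But two points are worth flagging. First, the dual piece with $\epsilon_f$ is the genuinely hard part at weight $2$ and your ``absorb $\epsilon_f$ into the Fourier expansion'' step is too vague to be a proof: one typically needs an explicit Petersson-type formula for $\sum_f^h \epsilon_f\lambda_f(l)\lambda_f(n)$ (or an average of $\lambda_f(l)$ against the complementary sum), and this is where Bettin's argument does real work. Second, at $2k=2$ the Bessel function is $J_1$, whose weaker small-argument behavior is precisely why the paper's own method, which requires $k\geq 2$ for the convergence and contour-shifting in Lemmas~\ref{lem:v} and~\ref{lemV=}, does not cover this case and why the authors import Bettin's result instead.
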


In sections \ref{section4} and \ref{section5} we prove an asymptotic formula for the first moment when $k>1$. Our formula is uniform in all considered parameters and allows extending the logarithmic length of mollifier $\Omega$ up to $1$ in both weight and level aspects.
\begin{thm}\label{thm:BF}
Let $2k \geq 4 $, $t \in \R$, $T=1+|t|$, $N=p^{\nu}$, $p$ is a prime  and $\nu \geq 2$.
If $p|l$ then $M_1(l,0,it)=0$. For $(l,p)=1$
uniformly in $N$, $l$, $T$, $p$ and $k$
\begin{equation*}
M_1(l,0,it)=\frac{\phi_{\nu}(N)}{l^{1/2+it}}
+O\left( V_{N}(l)+\frac{1}{p}V_{N/p}(l)+ \delta_{N,p^2}\frac{1}{\sqrt{l}p^2}\right),
\end{equation*}
where
\begin{equation*}
 V_N(l)\ll \begin{cases}
                 \frac{1}{\sqrt{lT}}\left(2\pi e \frac{lT}{Nk}\right)^k, & l< \frac{1}{4\pi e}\frac{Nk}{T}\\
                \frac{l^{1/2}}{N}(lT)^{\epsilon}\max{\left( \frac{\sqrt{T}}{k},
             \frac{1}{\sqrt{k}}\right)}, & l\geq \frac{1}{4\pi e}\frac{Nk}{T}.
            \end{cases}
\end{equation*}
\end{thm}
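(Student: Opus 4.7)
The plan is to apply the approximate functional equation to $L_f(1/2+it)$ and then evaluate the resulting harmonic sum against a Petersson-type trace formula for newforms on $\Gamma_0(p^{\nu})$. First, I would write
\begin{equation*}
L_f(1/2+it) = \sum_{n \geq 1} \frac{\lambda_f(n)}{n^{1/2+it}}\, W_1\!\left(\frac{n}{\sqrt{N}}\right) + \epsilon_f\, X(t)\sum_{n \geq 1}\frac{\lambda_f(n)}{n^{1/2-it}}\, W_2\!\left(\frac{n}{\sqrt{N}}\right),
\end{equation*}
where $W_1, W_2$ are smooth cutoffs of rapid decay past $n \gg \sqrt{N}\,T^{1+\epsilon}$ and $X(t)$ is the gamma ratio from \eqref{eq: functionalE}. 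Multiplying by $\lambda_f(l)$ and summing over $f\in H_{2k}^*(N)$ with the harmonic weight reduces the problem to evaluating $\sum_{f \in H_{2k}^*(N)}^{h}\lambda_f(l)\lambda_f(n)$ against these weights.

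Next, I would sieve the full basis on $\Gamma_0(N)$ down to newforms. For $N=p^{\nu}$, $\nu\geq 2$, oldforms are lifts from level $p^{\nu-1}$ and (when $\nu=2$) from level $1$, so a M\"obius-type inversion expresses the newform harmonic sum as a linear combination of classical Petersson sums at levels $N$, $N/p$, and, if $\nu=2$, at level $1$, weighted by $1$, $1/p$ and $1/p^2$ respectively. The diagonal term of each classical Petersson sum produces the main term $\phi_{\nu}(N)/l^{1/2+it}$ via $n=l$, using $(l,p)=1$ and the fact that $W_1$ is essentially $1$ at $n=l$ up to a tail of size $V_N(l)$. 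This accounts for the three-term structure of the error in Theorem \ref{thm:BF}.

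The off-diagonal contribution of each piece is of the form
\begin{equation*}
\sum_{\substack{c\geq 1 \\ N'\mid c}} \frac{1}{c}\sum_{n\geq 1} \frac{\tau_{it}(n)}{\sqrt{n}}\, S(l,n;c)\, J_{2k-1}\!\left(\frac{4\pi\sqrt{ln}}{c}\right) W\!\left(\frac{n}{\sqrt{N}}\right),
\end{equation*}
with $N'\in\{N,N/p,1\}$. The two regimes in $V_N(l)$ arise from splitting on the size of the Bessel argument: when $4\pi\sqrt{ln}/c\leq 2k-1$ I would apply the crude bound $J_{2k-1}(x)\ll x^{2k-1}/\Gamma(2k)$ and Stirling, producing the exponentially small factor $(2\pi e\, lT/(N'k))^{k}/\sqrt{lT}$; when the argument exceeds $2k-1$ I would combine Weil's bound $|S(l,n;c)|\leq \tau(c)(l,n,c)^{1/2}c^{1/2}$ with the uniform estimate $J_{2k-1}(x)\ll \min(x^{-1/2},k^{-1/3})$, then integrate by parts in $n$ against $W$ to truncate the sum at $n\asymp \sqrt{N}T^{1+\epsilon}$ and obtain polynomial savings.

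The main obstacle is maintaining uniformity in all four parameters $N,l,k,T$ simultaneously through the transition $l\asymp Nk/T$, where $J_{2k-1}$ switches regime. The factor $\max(\sqrt{T}/k,1/\sqrt{k})$ in the second range of $V_N(l)$ reflects the balance between the $x^{-1/2}$ and $k^{-1/3}$ Bessel estimates after accounting for the typical argument size $\sqrt{lT/N}$ at the dominant modulus $c\asymp N'$. This uniform handling of the Kloosterman--Bessel sum is the essential improvement over Lemma \ref{lem:R}, which gives an error term polynomial in $p$ and $k$ only after fixing these parameters, and it is what ultimately permits extending the logarithmic mollifier length $\Omega$ all the way up to~$1$.
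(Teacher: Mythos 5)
Your proposal follows the classical route (approximate functional equation, newform sieving, Petersson, Weil bound plus Bessel asymptotics), whereas the paper deliberately avoids the approximate functional equation: it works with the absolutely convergent series for $L_f(1/2+u+it)$ with $\Re u>3/4$, applies Petersson, opens the Kloosterman sums and uses the Mellin--Barnes representation of $J_{2k-1}$ together with the functional equation of the Lerch zeta function, and only then continues analytically to $u=0$. That is not just a stylistic difference: two steps of your argument have genuine gaps. First, the dual sum in the approximate functional equation carries the root number $\epsilon_f$, and you silently drop it when you say the problem ``reduces to evaluating $\sum^h_f\lambda_f(l)\lambda_f(n)$.'' For $N=p^{\nu}$ with $\nu\geq 2$ one has $\lambda_f(p)=0$, so $\epsilon_f$ (an Atkin--Lehner eigenvalue) cannot be rewritten in terms of Hecke eigenvalues as in the square-free case, and $\sum^h_f\epsilon_f\lambda_f(l)\lambda_f(n)$ is not accessible to the Petersson formula. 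Without handling this term the main term itself is not established.

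Second, your account of the $\delta_{N,p^2}\,l^{-1/2}p^{-2}$ term is wrong. The sieving to newforms (Theorem \ref{PetRou}) involves only levels $N$ and $N/p$, with weight $1/(p-p^{-1})$ when $\nu=2$; there is no level-$1$ Petersson sum weighted by $1/p^2$. In the paper that term is a secondary \emph{main} term: after removing the coprimality condition $(n,p)=1$ one meets $S_2(l,u,v;p)=\sum_n n^{-1/2-u-v}\Delta_{2k,p}(l,np)$, whose modulus $q=p$ contributes $Kl(l,np;p)=-1$, i.e.\ $\widetilde g(s,v;p)=-\zeta(s+v)$, and the pole of this zeta function at $s+v=1$ produces the explicit term $-\frac{(2\pi)^{2v}i^{2k}}{p-p^{-1}}\frac{\Gamma(k-v)}{\Gamma(k+v)}\frac{1}{p^{1+2v}l^{1/2-v}}$ (for $\nu\geq3$ this whole contribution vanishes by Lemma \ref{Royer}). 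Your framework would have to locate and extract this off-diagonal, non-oscillating contribution separately; treating it as part of the ``diagonal'' structure misses it. Finally, even for the error terms, the bounds $|S(l,n;c)|\leq\tau(c)(l,n,c)^{1/2}c^{1/2}$ and $J_{2k-1}(x)\ll\min(x^{-1/2},k^{-1/3})$ do not obviously reproduce the stated quality $\frac{l^{1/2}}{N}(lT)^{\epsilon}\max(\sqrt T/k,1/\sqrt k)$ uniformly in $k$ and $T$; the paper instead gets square-root cancellation in $c$ by converting the Kloosterman sum, via the Lerch functional equation, into a dual sum with unimodular coefficients $e(\pm ln^{*}/q)$, and the $\max(\sqrt T/k,1/\sqrt k)$ comes from the saddle-point-free estimates of the integrals $I_{\pm}$ in Section \ref{section3}, not from the $k^{-1/3}$ transition bound.
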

Combining the last two estimates in one, we have
\begin{equation*}
V_N(l)+\frac{1}{p}V_{N/p}(l) + \delta_{N,p^2}\frac{1}{\sqrt{l}p^2} \ll_{k,\epsilon} \frac{l^{1/2+\epsilon}T^{1/2+\epsilon}}{N}.
\end{equation*}
This is consistent with the error term in theorem \ref{thm:Bet} and this improves the results of  \cite{Ich},   \cite{R}  and \cite{R2}. For $l=1$ the improvement is even more significant: we obtain the error term $O_{k,\epsilon} (N^{-k})$ instead of $O_{k,p}\left(N^{-1/2}\right)$ proved in \cite{R}.

Our method is different from the techniques applied in the prior literature. In particular, it is not based on the approximate functional equation. Instead we compute the explicit formula for the shifted first moment and then continue it analytically to the critical point. An advantage of such approach is a better understanding of the structure of error terms. The case $N=p^2$  is particularly interesting since  asymptotic formula has an additional error term coming from a pole of the Lerch zeta function. We study this case separately in section \ref{section4}.

The paper is organized as follows. In section \ref{section2} we remind the reader of some background information. Section \ref{section3} is devoted to studying a special function which appears as a part of an error term. Asymptotics of the first moment is computed in sections \ref{section4} and \ref{section5}.  In the last section  we prove the main theorem using the technique of mollification.
\section{Background information}\label{section2}
 We follow the notations of \cite{BalFrol} and let
\begin{equation}
\id_{(l,p)=1}=\begin{cases}
1& (l,p)=1,\\
0& p|l
\end{cases}
\end{equation}
and
\begin{equation}
\delta_q(n)=\begin{cases}
1& n\equiv 0\Mod{q},
\\ 0& \text{ otherwise.}
\end{cases}
\end{equation}

The Bessel function
\begin{equation}
J_s(z)=\sum_{n=0}^{\infty}\frac{(-1)^n}{\Gamma(n+1)\Gamma(n+1+s)}\left(\frac{z}{2}\right)^{s+2n}
\end{equation}
satisfies the Mellin-Barnes representation
\begin{equation}\label{Barnes}
J_{2\lambda-1}(y)=\frac{1}{4\pi i}\int\limits_{\Re s=\Delta}
\frac{\Gamma(\lambda-1/2+s/2)}{\Gamma(\lambda+1/2-s/2)}\left(\frac{y}{2}\right)^{-s}ds
\end{equation}
for $1-2\Re{\lambda}<\Delta<0$ and positive real  $y$.
Let
\begin{equation}
e(x):=\exp(2\pi i x).
\end{equation}
The Lerch zeta function with parameters $\alpha, \beta \in \R$ is defined by
\begin{equation}
\zeta(\alpha,\beta;s)=\sum_{n+\alpha>0}\frac{e(n\beta)}{(n+\alpha)^s}
\end{equation}
for $\Re{s}>1$.
This a periodic function on $\beta$ with a period one, $\zeta(\alpha,\beta;s)$ can be analytically continued
on the whole complex plane except the point $s=1$ for $\beta \in \Z$, where it has a simple pole with residue $1$.
One has the functional equation
\begin{equation}\label{Lerch.func.equat}
\xi(0,\alpha;1-s)=\frac{\Gamma(s)}{(2\pi)^s}
\left(
e\left(\frac{s}{4}\right)\xi(\alpha,0;s)+e\left(-\frac{s}{4}\right)\xi(-\alpha,0;s)
\right).
\end{equation}
The classical Kloosterman sum is defined by
\begin{equation}
Kl(m,n;c)=\sum_{\substack{x \Mod{c}\\ (x,c)=1}}e\left(\frac{mx+nx^{*}}{c}\right),\quad
xx^{*}\equiv 1\Mod{c}.
\end{equation}
\begin{lem}(\cite{Royer}, lemma A.12)\label{Royer}
Let $m,n,c$ be three strictly positive integers and $p$ be a prime
number. Suppose $p^2|c$, $p|m$ and $p\not{|} n$. Then $Kl(m,n;c)=0$.
\end{lem}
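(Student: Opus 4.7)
The plan is to exploit the $p$-adic structure of the Kloosterman sum by performing a change of variables that averages over a small shift. First, using the Chinese Remainder Theorem, I would factor $c = p^\alpha c_0$ with $\alpha \geq 2$ and $(c_0, p) = 1$, and write $Kl(m,n;c)$ as the product of $Kl(m \overline{c_0}, n \overline{c_0}; p^\alpha)$ and $Kl(m \overline{p^\alpha}, n \overline{p^\alpha}; c_0)$. Since $p \mid m$ and $p \nmid n$, these divisibility conditions are preserved for the first factor, so the problem reduces to showing $Kl(m,n; p^\alpha) = 0$ under the hypotheses $\alpha \geq 2$, $p\mid m$, $p\nmid n$.

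For this reduced statement, the key observation is that for $\alpha \geq 2$, the shift $x \mapsto x + p^{\alpha-1} t$ with $t \in \Z/p\Z$ permutes the set of residues mod $p^\alpha$ coprime to $p$. Averaging over $t$ gives
\begin{equation*}
Kl(m,n; p^\alpha) = \frac{1}{p} \sum_{\substack{x \Mod{p^\alpha}\\ (x,p)=1}} \sum_{t=0}^{p-1} e\!\left(\frac{m(x+p^{\alpha-1}t) + n(x+p^{\alpha-1}t)^*}{p^\alpha}\right).
\end{equation*}
Since $2(\alpha-1) \geq \alpha$, a direct verification shows $(x + p^{\alpha-1} t)^* \equiv x^* - x^{*2} p^{\alpha-1} t \pmod{p^\alpha}$, so the exponent linearizes in $t$.

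Substituting this expansion, the inner sum over $t$ becomes an additive character sum modulo $p$ of the form $\sum_{t} e(t(m - n x^{*2})/p)$, which equals $p$ when $m \equiv n x^{*2} \Mod{p}$ and vanishes otherwise. The condition $m \equiv n x^{*2} \Mod{p}$ would force $n \equiv m x^2 \equiv 0 \Mod{p}$, contradicting $p \nmid n$. Hence every inner sum vanishes and so does $Kl(m,n;p^\alpha)$.

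I do not expect a serious obstacle here: the main technical point is the first-order Hensel-type expansion of the inverse $(x+p^{\alpha-1}t)^*$, which is routine once one checks that the quadratic correction lives in $p^{2(\alpha-1)} \subseteq p^\alpha$. The only place to be careful is the CRT reduction and the fact that the hypothesis $p \nmid n$ (rather than $p \nmid (m,n)$) is what ultimately breaks the congruence $m \equiv n x^{*2} \Mod{p}$; the argument would fail if both $m$ and $n$ were divisible by $p$.
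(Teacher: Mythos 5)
Your proof is correct. The paper itself gives no proof of this lemma (it is quoted from Royer's thesis, Lemma A.12), so there is nothing to compare against; your argument — twisted multiplicativity to reduce to $c=p^\alpha$, $\alpha\ge 2$, then averaging over the shift $x\mapsto x+p^{\alpha-1}t$ with the first-order expansion of the inverse, so that the $t$-sum detects $m\equiv nx^{*2}\Mod p$, which is incompatible with $p\mid m$, $p\nmid n$ — is the standard one and all steps check out (in particular $2(\alpha-1)\ge\alpha$ justifies the linearization, and the unit multiples $\overline{c_0}$ in the CRT step preserve the divisibility hypotheses).
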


The key tool of our computations is the Petersson trace formula.
\begin{thm}\label{Petersson}(Petersson's trace formula)
For $m,n \geq 1$ we have
\begin{multline} \label{eq:Petersson}
\Delta_{2k,N}(m,n)=\sum_{f\in H_{2k} (N)}^{h}
\lambda_f(m)\lambda_f(n)=\\=\delta(m,n)+ 2\pi i^{-2k}\sum_{c\equiv 0 \Mod{N}}
\frac{Kl(m,n;c)}{c} J_{2k-1} ( \frac{4\pi\sqrt{mn}}{c} ),
\end{multline}
where $H_{2k} (N)$ is a basis of the space of cusp forms of level $N$ and weight $2k$.
\end{thm}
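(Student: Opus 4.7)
The natural approach is the classical Poincaré series argument, which is how Petersson originally established the formula. The plan is to construct a cusp form $P_m$ of level $N$ and weight $2k$ whose Petersson inner product against any $f$ recovers the $m$-th Fourier coefficient of $f$, and then to compute the Fourier expansion of $P_m$ in two different ways.

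First I would define the $m$-th Poincaré series
\begin{equation*}
P_m(z) = \sum_{\gamma \in \Gamma_\infty \backslash \Gamma_0(N)} e(m\gamma z)\, j(\gamma,z)^{-2k},
\end{equation*}
check absolute convergence for $2k \geq 4$ (and handle $2k=2$ by the usual continuation), and verify that $P_m$ lies in the space of cusp forms spanned by $H_{2k}(N)$. The first key computation is the unfolding identity: for any $f$ in this space with Fourier coefficients $\lambda_f(n) n^{(2k-1)/2}$, one unfolds the sum defining $P_m$ against the fundamental domain to obtain
\begin{equation*}
\langle f, P_m \rangle_N = \frac{\Gamma(2k-1)}{(4\pi m)^{2k-1}}\, \lambda_f(m) m^{(2k-1)/2}.
\end{equation*}
This is the step that tells us the harmonic weight $\Gamma(2k-1)/((4\pi)^{2k-1}\langle f,f\rangle_N)$ is exactly what arises from expanding $P_m$ in an orthonormal basis.

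The second and technically heaviest step is computing the Fourier coefficients of $P_m$ directly. I would partition the cosets $\Gamma_\infty \backslash \Gamma_0(N)$ according to the lower-left entry $c$, with $c \equiv 0 \Mod{N}$, and use the Bruhat decomposition to parameterize matrices by $c$ and residues $d \Mod c$. The $c=0$ term contributes $e(mz)$, giving the $\delta(m,n)$ term. The $c \neq 0$ terms produce, after a standard change of variable and evaluation of the resulting oscillatory integral in terms of a $J$-Bessel function (this is where the factor $2\pi i^{-2k}$ and the Bessel kernel $J_{2k-1}(4\pi\sqrt{mn}/c)$ emerge), a sum whose residue inner sums reassemble into the Kloosterman sum $Kl(m,n;c)$.

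Finally, I would expand $P_m$ in an orthonormal basis of $S_{2k}(\Gamma_0(N))$, use the first step to identify each coefficient $\langle P_m,f\rangle$ in terms of $\lambda_f(m)$, extract the $n$-th Fourier coefficient of this spectral expansion, equate it to the direct Fourier expansion from the Bruhat step, and multiply through by $(4\pi m)^{2k-1}/\Gamma(2k-1)$ and $m^{(2k-1)/2}n^{(2k-1)/2}$ to normalize. The main obstacle is the Bruhat/Fourier step: the careful bookkeeping of the coset representatives, the correct evaluation of the integral $\int_{-\infty}^{\infty} (z+d/c)^{-2k} e(-nz + m/(c^2(z+d/c)))\,dz$ as a Bessel function via a contour shift, and the verification that the character sums collapse exactly into the Kloosterman sum restricted to $c \equiv 0 \Mod{N}$. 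Convergence of the $c$-sum must also be justified (absolute for $2k \geq 4$, using the Weil bound for Kloosterman sums and standard Bessel bounds to secure uniformity in $m$ and $n$).
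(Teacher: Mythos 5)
Your outline is the classical Poincar\'e-series proof of Petersson's formula and is correct: the unfolding constant $\Gamma(2k-1)/(4\pi m)^{2k-1}$, the Bruhat decomposition producing $\delta(m,n)$ from $c=0$ and $Kl(m,n;c)J_{2k-1}(4\pi\sqrt{mn}/c)$ from $c\equiv 0\Mod{N}$, $c\neq 0$, and the convergence remarks (absolute for $2k\geq 4$, regularization needed at $2k=2$) are all as they should be, provided $H_{2k}(N)$ is taken orthogonal as you implicitly do. The paper states this theorem as standard background without proof, so there is no alternative argument to compare against; your route is the standard one.
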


\begin{thm}\label{PetRou}(remark $4$ of \cite{R})
Let  $N=p^{\nu}$ with prime $p$ and $\nu \geq 2$. Then
\begin{multline}\label{eq:rtrace}
\Delta_{2k,N}^{*}(m,n):=\sum_{f\in H_{2k}^{*}(N)}^{h}
\lambda_f(m)\lambda_f(n)=\\=\begin{cases}
\Delta_{2k,N}(m,n)-\frac{\Delta_{2k,N/p}(m,n)}{p-p^{-1}}& \text{if $\nu=2$ and $(N,mn)=1$},\\
\Delta_{2k,N}(m,n)-\frac{\Delta_{2k,N/p}(m,n)}{p}& \text{if $\nu \geq 3$ and $(N,mn)=1$},\\
0& \text{if } (N,mn)=1.
\end{cases}
\end{multline}
 \end{thm}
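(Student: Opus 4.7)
The plan is to use the Atkin--Lehner oldform/newform decomposition of $S_{2k}(N)$ and compute the oldform contribution to the harmonic Petersson sum in closed form.

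First, I would decompose $S_{2k}(p^\nu)$ orthogonally as
\[
S_{2k}(p^\nu) = \bigoplus_{j=0}^{\nu}\bigoplus_{g\in H_{2k}^*(p^j)} W_g,\qquad W_g := \operatorname{span}\{g(p^a z) : 0\le a\le \nu-j\},
\]
and pick an orthonormal basis of $S_{2k}(N)$ refining it. The standard Gram-matrix identity then expresses the contribution $C_{\nu}(g)$ of $W_g$ to $\Delta_{2k,p^\nu}(m,n)$ as
\[
C_\nu(g) = \frac{\Gamma(2k-1)}{(4\pi)^{2k-1}}\sum_{0\le a,b\le \nu-j}(G_g^{-1})_{ab}\,\lambda_g(m/p^a)\,\lambda_g(n/p^b),
\]
where $G_g=(\langle g(p^a z),g(p^b z)\rangle_{p^\nu})_{a,b}$ and the convention is that $\lambda_g(\cdot/p^a)$ vanishes when $p^a$ does not divide the argument. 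Under the coprimality $(p,mn)=1$, only the $(0,0)$-entry survives, giving $C_\nu(g)$ proportional to $(G_g^{-1})_{00}\,\lambda_g(m)\lambda_g(n)$.

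Second, I would compute $G_g$ in closed form. An unfolding argument using the index relation $[\Gamma_0(p^{\nu-1}):\Gamma_0(p^\nu)]=p$ together with the Hecke action of $g$ expresses each entry explicitly in terms of $\|g\|_{p^j}^2$ and $\lambda_g(p)$. The Atkin--Lehner values $\lambda_g(p)=0$ for $j\ge 2$, $\lambda_g(p)=\pm p^{-1/2}$ for $j=1$, and $|\lambda_g(p)|\le 2$ for $j=0$ then yield a closed form for $(G_g^{-1})_{00}$. The crucial per-newform identity
\[
C_\nu(g) = \frac{1}{\alpha}\,C_{\nu-1}(g) \qquad \text{for all } g\in H_{2k}^*(p^j),\; 0\le j\le \nu-1,
\]
with $\alpha=p$ when $\nu\ge 3$ and $\alpha=p-p^{-1}$ when $\nu=2$, then follows from a block-matrix inversion: the top-left $(\nu-j)\times(\nu-j)$ block of $G_g$ at level $p^\nu$ equals $p$ times the corresponding Gram matrix at level $p^{\nu-1}$ (by the index scaling), so only the added row and column for $g(p^{\nu-j}z)$ produces a correction, which either vanishes or contributes an extra $p^{-1}$ term depending on the arithmetic of $\lambda_g(p)$.

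Summing the per-newform identity over $g\in H_{2k}^*(p^j)$ for $0\le j\le\nu-1$ gives
\[
\Delta_{2k,p^\nu}(m,n) - \Delta_{2k,p^\nu}^*(m,n) = \frac{1}{\alpha}\,\Delta_{2k,N/p}(m,n),
\]
which is the stated formula. The third case, $p\mid mn$, is immediate from the Atkin--Lehner fact $\lambda_f(p)=0$ for $f\in H_{2k}^*(p^\nu)$ with $\nu\ge 2$, forcing $\lambda_f(m)=0$ whenever $p\mid m$. The main obstacle is the uniform per-newform ratio $1/\alpha$: its value changes from $p$ to $p-p^{-1}$ precisely at $\nu=2$ because the Gram-matrix correction from the last adjoined oldform vanishes whenever the underlying newform has $\lambda_g(p)=0$ (in particular for newforms of level $p^j$ with $j\ge 2$, which are the only ones contributing the new row at large $\nu$), but fails to vanish when $\nu-j=1$ and $\lambda_g(p)=\pm p^{-1/2}\ne 0$, i.e., at $\nu=2$ with a level-$p$ newform.
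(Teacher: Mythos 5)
First, note that the paper itself does not prove this statement: it is quoted from Rouymi [R, Remark 4], so there is no internal proof to compare against. Your framework — the Atkin--Lehner decomposition of $S_{2k}(p^{\nu})$ into classes $W_g$, the Gram-matrix expression for the contribution of each class, and the observation that for $(mn,p)=1$ only the $(0,0)$-entry of $G_g^{-1}$ survives — is the standard route and is essentially how the cited source proceeds. The reduction of the theorem to the per-newform identity $C_{\nu}(g)=\alpha^{-1}C_{\nu-1}(g)$ is legitimate (and, conversely, that identity is forced by the theorem via linear independence of the systems $(m,n)\mapsto\lambda_g(m)\lambda_g(n)$, so you have reduced to the right statement). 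The third case ($p\mid mn$; the displayed condition ``$(N,mn)=1$'' there is a typo) is handled correctly via $\lambda_f(p)=0$ for newforms of level divisible by $p^2$.

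The gap is in the one step that carries all the content: verifying that the block-inversion correction produces the uniform factor $1/\alpha$, with $\alpha=p$ for $\nu\ge 3$ and $\alpha=p-p^{-1}$ for $\nu=2$. You never compute the Gram matrices, and the mechanism you describe is incompatible with the formula you are proving. Concretely: (a) you assert that newforms of level $p^j$ with $j\ge 2$ ``are the only ones contributing the new row at large $\nu$,'' which is false — every class $W_g$, including those attached to newforms of level $1$ and $p$ (for which $\lambda_g(p)\ne 0$ in general), gains the new vector $g(p^{\nu-j}z)$ at each step, so for $\nu\ge 3$ you must show that the correction $\bigl((pG')^{-1}c\bigr)_0^2/\bigl(d-c^{T}(pG')^{-1}c\bigr)$ vanishes for those classes as well, and this does not follow from $\lambda_g(p)=0$. (b) At $\nu=2$ the level-$1$ classes have $\nu-j=2$, so by your stated criterion their correction would vanish and their ratio would be $1/p$, while the level-$p$ classes would get $1/(p-p^{-1})$; summing would give $\Delta_{2k,p^2}-p^{-1}\Sigma_0-(p-p^{-1})^{-1}\Sigma_1$, which is not of the form $\Delta_{2k,p^2}-(p-p^{-1})^{-1}\Delta_{2k,p}$. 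For the stated formula to hold, the correction for the level-$1$ classes at $\nu=2$ must in fact be nonzero and equal exactly $p^{-2}/(1-p^{-2})$ times the leading block term, uniformly in $\lambda_g(p)\in[-2,2]$; this is a genuine identity about the $3\times 3$ Gram matrix of $g,\,g(pz),\,g(p^2z)$, whose off-diagonal entries involve both $\lambda_g(p)$ and $\lambda_g(p^2)=\lambda_g(p)^2-1$, so the matrix is not of the Markov type your vanishing heuristic implicitly assumes. Until the inner products $\langle g(p^az),g(p^bz)\rangle_{p^{\nu}}$ are computed explicitly for newforms of level $1$ and $p$ and the inverses worked out, the proof is incomplete.
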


\section{The error term}\label{section3}
Let $k>1+\Re{u}\geq1$,  $\Re{v}=0$, $\Im{v}=t$ with $t \in \R$ and $T:=1+|t|$.
The main object of this section is the function
\begin{multline}\label{int:I}
 I_{\pm}(u,v,k,x)\\
=e\left( \pm \frac{1}{8}\mp \frac{k}{4} \right)x^{1/2-k}\frac{\Gamma(k-v-u)}{\Gamma(2k)}
{}_{1}F_{1}\left(k-v-u,2k;-\frac{e(\mp 1/4) }{x}\right)
\\=\frac{1}{2\pi i}\int_{\Delta/2} \frac{\Gamma(k-1/2+s)}{\Gamma(k+1/2-s)}\Gamma(1/2-u-v-s)
x^{s}e\left( \pm \frac{s}{4}\right)ds
\end{multline}
with $1-2k<\Delta<-1-2\Re u$. This integral appears in the error term of the asymptotic formula for the first moment.
We estimate $I_{\pm}(0,v,k,x)$ distinguishing two cases of large and small argument $x$.

\subsection{The case of large $x$}
\begin{lem}\label{lem:Ilargex}
 For $x>Te/k$ we have
\begin{equation}
 I_{\pm}(0,v,k,x)\ll e^{-\pi T/2}\left(\frac{x}{T} \right)^{1/2}\left(\frac{eT}{xk} \right)^k.
\end{equation}
\end{lem}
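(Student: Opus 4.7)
The plan is to work from the closed-form representation given by the first equality in \eqref{int:I},
\[
I_{\pm}(0,it,k,x) = e\bigl(\pm\tfrac{1}{8}\mp\tfrac{k}{4}\bigr)\,x^{1/2-k}\,\frac{\Gamma(k-it)}{\Gamma(2k)}\,
{}_{1}F_{1}\bigl(k-it,\,2k;\,-e(\mp 1/4)/x\bigr),
\]
and estimate the Gamma-quotient and the confluent hypergeometric factor separately.

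For the Gamma-quotient I would apply Stirling's formula in the uniform form $|\Gamma(k-it)|\ll(k^{2}+t^{2})^{(k-1/2)/2}e^{-k-|t|\arctan(|t|/k)}$ together with $\Gamma(2k)\gg(2k)^{2k-1/2}e^{-2k}$, and verify the target inequality $|\Gamma(k-it)|/\Gamma(2k)\ll e^{-\pi T/2}T^{k-1/2}(e/k)^{k}$. After algebraic simplification this reduces to
\[
\bigl((k^{2}+t^{2})/(kT)^{2}\bigr)^{(k-1/2)/2}\,e^{\pi T/2-|t|\arctan(|t|/k)}\ll 2^{2k-1/2},
\]
which I would check by splitting into $|t|\le k$ (using $(k^{2}+t^{2})\le 2k^{2}$ and the Taylor lower bound $|t|\arctan(|t|/k)\ge (2/3)t^{2}/k$ to absorb $e^{\pi T/2}$) and $|t|>k$ (using $(k^{2}+t^{2})\le 2t^{2}$ together with $|t|\arctan(|t|/k)\ge \pi|t|/2-k$, which cancels the $e^{\pi T/2}$ factor directly and leaves only an $O(k)$-term in the exponent, easily absorbed by $2^{2k-1/2}$).

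For the ${}_{1}F_{1}$-factor, the argument $-e(\mp 1/4)/x=\pm i/x$ is purely imaginary of modulus $1/x$, and the hypothesis $x>Te/k$ places $|z|<k/(Te)$ strictly below the Bessel turning point. For $t=0$, Kummer's second formula
\[
{}_{1}F_{1}(k,\,2k;\,z) = e^{z/2}\,{}_{0}F_{1}\bigl(;\,k+1/2;\,z^{2}/16\bigr)
\]
expresses the factor via a Bessel function $J_{k-1/2}(1/(2x))$ evaluated at $1/(2x)<k-1/2$, where the sub-turning-point bound $J_{\nu}(y)\ll (y/(2\nu))^{\nu}$ delivers $|{}_{1}F_{1}|\ll 1$. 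For general $t\neq 0$ the same conclusion follows either from the Whittaker identity ${}_{1}F_{1}(k-it,2k;z)=z^{-k}e^{z/2}M_{it,k-1/2}(z)$ and the corresponding sub-turning-point estimate on $M_{it,k-1/2}$, or from a stationary-phase analysis of the integral representation
\[
{}_{1}F_{1}(k-it,2k;i/x)=\frac{\Gamma(2k)}{|\Gamma(k+it)|^{2}}\int_{0}^{1}s^{k-1}(1-s)^{k-1}\,e^{i[s/x-t\log(s/(1-s))]}\,ds,
\]
in which the phase $\phi(s)=s/x-t\log(s/(1-s))$ has derivative $1/x-t/(s(1-s))$, either monotone (if $4|t|x>1$) yielding IBP decay, or with a single stationary point handled by the standard Laplace method.

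Combining the two estimates with the prefactor $x^{1/2-k}$ and using $x^{1/2-k}T^{k-1/2}(e/k)^{k}=(x/T)^{1/2}(eT/(xk))^{k}$ gives the claim. The main technical obstacle is the $O(1)$-bound on the hypergeometric factor: a naive triangle inequality on the Taylor series yields $\exp(1/x)$, which is not uniformly bounded in $k$ when $x$ is close to the threshold $Te/k$, so the oscillatory cancellation coming from the purely imaginary argument must be exploited via either the Bessel/Whittaker identity or the stationary-phase analysis indicated above.
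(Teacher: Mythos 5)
Your skeleton --- the closed form from \eqref{int:I}, Stirling for $|\Gamma(k-it)|/\Gamma(2k)$, and a separate bound for the confluent hypergeometric factor --- is essentially the paper's proof in different packaging: the paper shifts the contour to recover exactly the Taylor series of the ${}_{1}F_{1}$, estimates it term by term, and applies Stirling to $|\Gamma(k+it)|/\Gamma(2k)$ at the very end. Your Gamma-ratio estimate and the verification by splitting at $|t|=k$ are correct. The gap is the claim $|{}_{1}F_{1}(k-it,2k;\pm i/x)|\ll 1$ for $t\neq 0$: neither of your two suggested routes is carried out, and neither is routine. The Whittaker route requires uniform sub-turning-point asymptotics of $M_{it,k-1/2}$ with \emph{two} large parameters; the stationary-phase route must beat the prefactor $\Gamma(2k)/|\Gamma(k+it)|^{2}$, which grows like $4^{k}e^{\pi|t|}$ when $|t|\gg k$, against an amplitude $(s(1-s))^{k-1}$ concentrated in a window of width $k^{-1/2}$ about $s=1/2$. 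That is a genuine two-parameter uniform estimate, not a one-line application of the standard stationary-phase lemma, so as written the key step is asserted rather than proved.

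Moreover, you have misdiagnosed where the difficulty lies: the hard $O(1)$ bound is not needed. The triangle inequality, done correctly with $|(k-it)_{m}|\le (k)_{m}T^{m}$ and $(k)_{m}\le(2k)_{m}$, gives $|{}_{1}F_{1}(k-it,2k;\pm i/x)|\le e^{T/x}\le e^{k/e}=(e^{1/e})^{k}$, an exponential in $k$ --- but your own target bound for the Gamma ratio has exponential headroom. Indeed $\Gamma(k)/\Gamma(2k)\asymp (e/k)^{k}4^{-k}k^{O(1)}$, so $e^{-\pi T/2}T^{k-1/2}(e/k)^{k}$ exceeds the true value of $|\Gamma(k-it)|/\Gamma(2k)$ by a factor of order $4^{k}$ (and by much more when $|t|$ is large, via the factor $\bigl((k^{2}+t^{2})/(kT)^{2}\bigr)^{(k-1/2)/2}$), which comfortably absorbs $(e^{1/e})^{k}\approx(1.44)^{k}$. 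This elementary route is precisely the paper's argument: it bounds $|\Gamma(k+m-it)|\le|\Gamma(k+it)|\,T^{m}(k+m-1)!/(k-1)!$, controls the coefficients $(k)_{m}/((2k)_{m}\,m!)$ by Stirling-type manipulations of binomial coefficients, sums the resulting series using $x>T/k$, and only then applies Stirling to $|\Gamma(k+it)|/\Gamma(2k)$, at which point the factor $2^{2k}$ is exactly cancelled. You should either close your proof this way or supply a complete proof of the uniform ${}_{1}F_{1}$ bound; the former is much shorter.
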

\begin{proof}
Since $\Delta/2>1/2-k$, the contour in \eqref{int:I} is located to the right of all singularities
$s_j=1/2-k-j$ for $j=0,1,2,\ldots$
The function under the integral is bounded by $|\Im{s}|^{-1+\Re{s}}$.
Shifting the contour to the left, we have
\begin{multline*}
 I_{\pm}(0,v,k,x)=\sum_{j=0}^{\infty}(-1)^j\frac{\Gamma(k+j-it)}{\Gamma(2k+j)j!}
x^{1/2-k-j}e(\pm 1/4(1/2-k-j))=\\
=e(\pm 1/4(1/2-k))x^{1/2-k}\sum_{j=0}^{\infty}\frac{\Gamma(k+j-it)}{\Gamma(2k+j)j!}
\left(-\frac{e(\mp 1/4)}{x} \right)^j.
\end{multline*}
Thus
\begin{equation*}
 I_{\pm}(0,v,k,x)\ll x^{1/2-k}\sum_{m=0}^{\infty}\frac{|\Gamma(k+m-it)|}{m!|\Gamma(2k+m)|}\frac{1}{x^m}.
\end{equation*}
Using the functional equation for the Gamma function and the inequality
\begin{equation}\label{estimate1}
|k+j-it|\le k+j+|t|\le (k+j)(1+|t|) \quad \text{  for } j\geq0,
\end{equation}
we obtain
\begin{equation*}
|\Gamma(k+m-it)|=|\Gamma(k+it)|\prod_{j=0}^{m-1}|k+j-it|\le
|\Gamma(k+it)|T^m\frac{(k+m-1)!}{(k-1)!}.
\end{equation*}
Therefore,
\begin{equation*}\label{Ibound1}
 I_{\pm}(0,v,k,x)\ll x^{1/2-k}\frac{|\Gamma(k+it)|}{\Gamma(2k)}\sum_{m=0}^{\infty}\frac{T^m}{x^m}
\frac{(k+m-1)!(2k-1)!}{(k-1)!(2k+m-1)!m!}.
\end{equation*}
To estimate the factorials we rewrite them in the following way
\begin{equation*}\label{factorial1}
 \frac{(k+m-1)!(2k-1)!}{(k-1)!(2k+m-1)!m!}\ll
 \frac{(k+m)!}{(2k+2m)!} \frac{(2k+2m)!}{(2k+m)!m!}\frac{(2k)!}{k!} .
\end{equation*}
Applying Stirling's formula $n!\asymp\sqrt{n}(n/e)^n$
one has
\begin{multline*}\label{factorial2}
\frac{(2k+2m)!}{(2k+m)!m!}=\prod_{j=1}^m\frac{j+m}{j}\prod_{j=m+1}^{2k+m}\frac{j+m}{j}
\ll \\ \ll\frac{(2m)!}{m!m!}2^{2m}\ll\frac{1}{\sqrt{m+1}}2^{2k+2m},
\end{multline*}

\begin{equation*}\label{factorial3}
\frac{(k+m)!}{(2k+2m)!}\ll\frac{\sqrt{k+m}}{(k+m)!2^{2k+2m}}.
\end{equation*}
Using the last two estimates we obtain
\begin{equation*}\label{factorial4}
 \frac{(k+m-1)!(2k-1)!}{(k-1)!(2k+m-1)!m!}\ll
 \frac{\sqrt{k+m}}{\sqrt{m+1}}.
 \frac{(2k)!}{k!(k+m)!}.
\end{equation*}
This gives
\begin{equation*}\label{Ibound2}
 I_{\pm}(0,v,k,x)\ll x^{1/2-k}\frac{|\Gamma(k+it)|}{\Gamma(2k)}\frac{(2k)!}{k!}
 \sum_{m=0}^{\infty}\frac{T^m}{x^m} \frac{\sqrt{k+m}}{(k+m)!\sqrt{m+1}}.
\end{equation*}
Since
\begin{equation*}\label{factorial5}
 \frac{(k+m)!}{k!k^m}=\prod_{j=1}^m\frac{k+j}{k}>1
\end{equation*}
one has that for $\frac{T}{k}<x$
\begin{multline*}\label{Ibound3}
 I_{\pm}(0,v,k,x)\ll x^{1/2-k}\frac{|\Gamma(k+it)|}{\Gamma(2k)}\frac{(2k)!\sqrt{k}}{k!k!}
 \sum_{m=0}^{\infty}\left(\frac{T}{kx}\right)^m\ll \\
 \ll x^{1/2-k}\frac{|\Gamma(k+it)|}{\Gamma(2k)}2^{2k}.
\end{multline*}
 Applying Stirling's formula and \eqref{estimate1} we have for $\frac{T}{k}<x$
\begin{equation*}
 I_{\pm}(0,v,k,x)
 \ll e^{-\pi T/2}x^{1/2-k}T^{k-1/2}\frac{\Gamma(k)2^{2k}}{\Gamma(2k)}
\ll e^{-\pi T/2}\left(\frac{x}{T} \right)^{1/2}\left(\frac{eT}{xk} \right)^k.
\end{equation*}
\end{proof}

\subsection{The case of small $x$}
\begin{lem}\label{lem:Ismallx}
 For $x\leq Te/k$ we have
\begin{equation}
 e^{\pi T/2}I_{-}(0,v,k,x)\ll x^{-1/2}\max{(\frac{\sqrt{T}}{k}, \frac{1}{\sqrt{k}})},
\end{equation}
\begin{equation}
 e^{-\pi t/2}I_{+}(0,v,k,x)\ll x^{-1/2}\frac{1}{\sqrt{k+T}}.
\end{equation}
\end{lem}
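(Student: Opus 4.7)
The plan is to work with the Mellin--Barnes representation in \eqref{int:I} and shift the contour from $\Re s = \Delta/2\in(1/2-k,-1/2)$ to the line $\Re s=-1/2$. Since $k>1$, the poles of $\Gamma(k-1/2+s)$ at $s=1/2-k-n$ lie strictly to the left of $-1/2$, while the poles of $\Gamma(1/2-it-s)$ at $s=1/2-it+n$ have real part at least $1/2$; hence no residue is crossed. On the new contour $|x^s|=x^{-1/2}$, which accounts for the universal $x^{-1/2}$ factor appearing in both bounds.

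Writing $s=-1/2+i\tau$ and applying Stirling, the integrand is pointwise majorised by
\begin{equation*}
\frac{x^{-1/2}(1+|t+\tau|)^{1/2}}{k^2+\tau^2}\; e^{-\pi|t+\tau|/2\;\mp\;\pi\tau/2},
\end{equation*}
using $|\Gamma(k-1+i\tau)/\Gamma(k+1-i\tau)|\ll 1/(k^2+\tau^2)$ (since $k\ge 2$), $|\Gamma(1-i(t+\tau))|\ll(1+|t+\tau|)^{1/2}e^{-\pi|t+\tau|/2}$, and $|e(\pm s/4)|=e^{\mp\pi\tau/2}$. I would then partition the $\tau$-line into the four sub-regions determined by the signs of $\tau$ and $t+\tau$; in each region $|t+\tau|$ is linear in $\tau$ and the combined exponential $e^{-\pi|t+\tau|/2\mp\pi\tau/2}$ is monotone. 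A direct computation of the suprema shows that they equal $e^{\pi t/2}$ for $I_+$ and $e^{-\pi t/2}$ for $I_-$; these are exactly what is cancelled by the factors $e^{-\pi t/2}$ and $e^{\pi T/2}$ on the respective left-hand sides, up to the absolute constant $e^{\pi/2}$ in the favourable sign of $t$.

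After this exponential bookkeeping, the remaining estimate reduces to a polynomial integral of the shape
\begin{equation*}
\int\frac{(1+|t+\tau|)^{1/2}}{k^2+\tau^2}\,d\tau,
\end{equation*}
possibly restricted to one of the sub-regions. Splitting at $|\tau|=k$ and using $(1+|t+\tau|)^{1/2}\le(1+|t|)^{1/2}+|\tau|^{1/2}$, the body $|\tau|\le k$ contributes $\ll\sqrt{k+T}/k$ and the tail $|\tau|>k$ contributes $\ll(k+T)^{-1/2}$; together they give $\ll(k+T)^{-1/2}$, proving the $I_+$ bound. For $I_-$ the clean cancellation against $e^{\pi T/2}$ takes place only on a sub-interval of length $O(T)$, and the same split there produces the weaker $\max(\sqrt{T}/k,\,1/\sqrt{k})$ stated in the lemma.

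The main technical obstacle is the case $t\le 0$ for $I_-$: on the sub-region $\tau\ge|t|$ the naive Stirling estimate leaves an exponential $e^{\pi|t|/2}$ that $e^{\pi T/2}$ does not absorb. To circumvent it I would exploit the oscillation of $x^{i\tau}$ in the Mellin--Barnes integral: the hypothesis $x\le Te/k$ forces $|\log x|\ge\log(k/(Te))$, so integrating by parts in $\tau$ with antiderivative $x^{i\tau}/(i\log x)$ shifts one derivative onto the Stirling factors, gaining a factor $1/|\log x|$ that can be absorbed into the absolute constant and effectively restricting the range to $|\tau|\ll k+T$, where the Gamma decay wins. This IBP is the only step where the hypothesis $x\le Te/k$ is essential; the remainder of the argument is a straightforward Stirling computation.
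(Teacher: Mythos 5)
Your setup coincides with the paper's: shift the contour in \eqref{int:I} to $\Re s=-1/2$, apply Stirling to obtain the majorant $x^{-1/2}(1+|t+\tau|)^{1/2}(k+|\tau|)^{-2}e^{-\pi(|t+\tau|\pm(t+\tau))/2}$ after multiplying by $e^{\mp\pi t/2}$, and split according to the sign of $t+\tau$; the favourable-sign analysis is the paper's as well. The genuine gap is your treatment of the unfavourable sign ($t\le 0$ for $I_-$). The deficit to be recovered there is a full exponential: on the entire ray $\tau\ge|t|$ the integrand of $e^{\pi T/2}I_-$ is of size $e^{\pi|t|}$ times a polynomial, whereas one integration by parts in $\tau$ against $x^{i\tau}$ gains only a factor $1/|\log x|$. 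Even when $Te/k<1$ this is at most $1/\log(k/(Te))$, a logarithmic gain; and when $Te/k\ge1$ the hypothesis $x\le Te/k$ gives no lower bound on $|\log x|$ at all (the value $x=1$ is admissible). Nor does "effectively restricting to $|\tau|\ll k+T$" help, since the offending factor $e^{\pi|t|/2}$ already lives on $\tau\in[|t|,\,C(k+T)]$. The paper sidesteps all of this with the elementary symmetry $\overline{I_{\pm}(0,it,k,x)}=I_{\mp}(0,-it,k,x)$ — this is the content of its "without loss of generality $t>1$": the two displayed estimates exchange under $t\mapsto-t$, so both need only be proved for $t>0$, where absolute values suffice. (Strictly, the printed prefactor $e^{\pi T/2}$ should be the $e^{\pi t/2}$ that actually multiplies $I_-$ in \eqref{eq:vnuv}; with that prefactor the symmetry is exact, and downstream only the weaker bound $x^{-1/2}\max(\sqrt{T}/k,1/\sqrt{k})$ is used for both terms.)

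A smaller but real problem is your evaluation of the polynomial integral for $I_+$. The two pieces you quote, $\sqrt{k+T}/k$ from $|\tau|\le k$ and $(k+T)^{-1/2}$ from $|\tau|>k$, do not "together give $(k+T)^{-1/2}$": for $T\gg k$ one has $\sqrt{k+T}/k\gg(k+T)^{-1/2}$. The bound $1/\sqrt{k+T}$ is only obtained because, for $I_+$ with $t>0$, the region with no exponential decay is $\{\tau\le-t\}$, on which $|\tau|\ge t$ pushes the denominator up to $(k+|\tau|)^2\ge(k+t)^2$, while the complementary region carries $e^{-\pi(t+\tau)}$ and is negligible; this is exactly the paper's computation $\int_t^{\infty}\bigl(1+(r-t)^{1/2}\bigr)(k+r)^{-2}\,dr\ll(k+t)^{-1/2}$. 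You need to make that restriction explicit rather than bounding the unrestricted integral.
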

\begin{proof}
Moving the contour of integration in \eqref{int:I} to $\Re{s}=-1/2$ we have
\begin{equation*}
 I_{\pm}(0,v,k,x)\ll x^{-1/2}\int_{-\infty}^{\infty}\frac{|\Gamma(k-1+ir)|}{|\Gamma(k+1-ir)|}
|\Gamma(1-i(r+t))|e^{\mp \pi r/2}dr.
\end{equation*}
Note that
\begin{equation*}
 \frac{|\Gamma(k-1+ir)|}{|\Gamma(k+1-ir)|}=\frac{1}{\sqrt{k^2+r^2}\sqrt{(k-1)^2+r^2}}
\ll \frac{1}{(k+|r|)^2}.
\end{equation*}
According to equation $8.332(3)$ of \cite{GR}
\begin{equation*}
 |\Gamma(1-i(r+t))|=\sqrt{\frac{\pi|r+t|}{\sinh{\pi|r+t|}}}
\ll e^{-\pi|r+t|/2}\left(|r+t|^{1/2}+1 \right).
\end{equation*}
Therefore,
\begin{equation*}
 e^{\mp\pi t/2}I_{\pm}(0,v,k,x)\ll x^{-1/2}\int_{-\infty}^{+\infty}\frac{1+|r+t|^{1/2}}{(k+|r|)^2}
e^{-\pi/2(|r+t|\pm (r+t))}dr.
\end{equation*}
Without loss of generality we assume that $t>1.$

First, we estimate $e^{-\pi t/2}I_{+}(0,v,k,x)$. Since
\begin{equation*}
 |r+t|+(r+t)=\begin{cases}
              2(r+t), & r \geq-t\\
              0,&r<-t
             \end{cases},
\end{equation*}
we have
\begin{multline*}
 x^{1/2}e^{-\pi t/2}I_{+}(0,v,k,x)\\ \ll
\int_{-\infty}^{-t}\frac{1+|r+t|^{1/2}}{(k+|r|)^2}dr+
\int_{-t}^{\infty}\frac{1+|r+t|^{1/2}}{(k+|r|)^2}e^{-\pi(r+t)}dr\\
\ll \int_{-\infty}^{-t-1}\frac{|r+t|^{1/2}}{(k+|r|)^2}dr+\frac{1}{(t+k)^2}
+\int_{-t+1}^{\infty}\frac{|r+t|^{1/2}}{(k+|r|)^2}e^{-\pi(r+t)}dr.
\end{multline*}
Consider the first integral
\begin{multline*}
 \int_{-\infty}^{-t-1}\frac{|r+t|^{1/2}}{(k+|r|)^2}dr=
\int_{1}^{\infty}\frac{r^{1/2}}{(r+k+t)^2}dr\\
\ll \int_{1}^{k+t}\frac{r^{1/2}}{(k+t)^2}dr+\int_{k+t}^{\infty}\frac{dr}{r^{3/2}}
\ll \frac{1}{\sqrt{k+t}}.
\end{multline*}
Let
\begin{equation*}
 f_1(r):=-\pi r+1/2\log{r}-2\log{(k+t-r)},
\end{equation*}
\begin{equation*}
 f_2(r):=-\pi r+1/2\log{r}-2\log{(k-t+r)}.
\end{equation*}
Since $f'_{1}(r),f'_{2}(r) <-1/2$  the second integral can be evaluated as follows
\begin{multline*}
 \int_{-t+1}^{\infty}\frac{|r+t|^{1/2}}{(k+|r|)^2}e^{-\pi(r+t)}dr=
\int_{1}^{\infty}\frac{r^{1/2}e^{-\pi r}}{(|r-t|+k)^2}dr\\
=\int_{1}^{t}\frac{r^{1/2}e^{-\pi r}}{(t-r+k)^2}dr+
\int_{t}^{\infty}\frac{r^{1/2}e^{-\pi r}}{(r-t+k)^2}dr
=\int_{1}^{t}e^{f_1(r)}dr+\int_{t}^{\infty}e^{f_2(r)}dr\\
\ll \int_{1}^{t}e^{f_1(r)}f_1'(r)dr+\int_{t}^{\infty}e^{f_2(r)}f_2'(r)dr
\ll\frac{1}{(k+t)^2}+\frac{\sqrt{t}e^{-\pi t}}{k^2}.
\end{multline*}
Finally, $$e^{-\pi t/2}I_{+}(0,v,k,x)\ll x^{-1/2}\frac{1}{\sqrt{k+t}}.$$

Similarly to the previous case
\begin{multline*}
 x^{1/2}e^{\pi t/2}I_{-}(0,v,k,x)\ll \int_{-\infty}^{-t-1}\frac{|r+t|^{1/2}}{(k+|r|)^2}e^{\pi(r+t)}dr+\\+
\int_{-t+1}^{\infty}\frac{|r+t|^{1/2}}{(|r|+k)^2}dr+\frac{1}{(k+t)^2}.
\end{multline*}
Estimating the first integral we have
\begin{equation*}
 \int_{-\infty}^{-t-1}\frac{|r+t|^{1/2}}{(k+|r|)^2}e^{\pi(r+t)}dr=\int_{1}^{\infty}
\frac{r^{1/2}}{(r+k+t)^2}e^{-\pi r}\ll \frac{1}{(k+t)^2}.
\end{equation*}
The second integral splits further into two parts
\begin{equation*}
 \int_{-t+1}^{\infty}\frac{|r+t|^{1/2}}{(|r|+k)^2}dr=
\int_{1}^{t}\frac{r^{1/2}}{(k+t-r)^2}dr+\int_{t}^{\infty}\frac{r^{1/2}}{(r+k-t)^2}dr.
\end{equation*}
Note that
\begin{equation*}
 \int_{t}^{\infty}\frac{r^{1/2}}{(r+k-t)^2}dr\ll
\begin{cases}
 t^{-1/2},& k<2t\\
k^{-1/2},&k>2t
\end{cases}
\end{equation*}
and
\begin{equation*}
 \int_{1}^{t}\frac{r^{1/2}}{(k+t-r)^2}dr=\frac{r^{1/2}}{k+t-r}\Bigl|_1^t-
 \int_{1}^{t}\frac{dr}{2(k+t-r)r^{1/2}}
 \ll \frac{\sqrt{t}}{k}.
\end{equation*}

Therefore,
$$e^{\pi t}I_{-}(0,v,k,x)\ll x^{-1/2}\max\left( \frac{\sqrt{t}}{k},\frac{1}{\sqrt{k}}\right).$$

\end{proof}

\section{Asymptotic formula for the first moment: $\nu \geq 3$}\label{section4}
Consider the twisted first moment of automorphic $L$-functions associated to primitive forms of  weight $2k\geq 4$ and level
$N=p^{\nu}$ with $p$ prime and $\nu \geq 3$
\begin{equation}
M_1(l,u,v)=\sum_{f \in H_{2k}^{*}(N)}^{h}\lambda_f(l)L_f(1/2+u+v).
\end{equation}

In this section we prove the following theorem
\begin{thm}
Let $k\geq 2 $, $p$ be a prime, $v=it, t \in \R$ and $T:=1+|t|$. For $N=p^{\nu},$ $\nu \geq 3$ we have
\begin{multline}\label{theorem nu3 statement}
 M_1(l,0,v)=\id_{(l,p)=1}\Biggl[\left( 1- \frac{1}{p}\right)\frac{1}{l^{1/2+v}} +\\+2 \pi i^{2k}V_N(0,v,k)
-\frac{2\pi i^{2k}}{p}V_{N/p}(0,v,k)\Biggr].
\end{multline}
Here
\begin{equation}\label{lemVestimate}
 V_N(0,v,k)\ll \begin{cases}
                 \frac{l^{1/2}(lT)^{\epsilon}}{N}\max{\left( \frac{\sqrt{T}}{k},\frac{1}{\sqrt{k}}\right)} & 4\pi elT\geq Nk,\\
                 \frac{1}{\sqrt{lT}}\left(2\pi e\frac{lT}{Nk}\right)^k & 4\pi elT<Nk.
                \end{cases}
\end{equation}
\end{thm}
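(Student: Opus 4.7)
The plan is to reduce $M_1(l,u,v)$ to a combination of Petersson's trace formulas by opening the Dirichlet series $L_f(1/2+u+v)=\sum_n \lambda_f(n)n^{-1/2-u-v}$ in the region $\Re(u+v)>1/2$ and interchanging with the $f$-average, and then to process the resulting off-diagonal by the Mellin--Barnes methods developed in Section~\ref{section3}. Swapping the order of summation produces
\begin{equation*}
M_1(l,u,v)=\sum_{n\ge 1} n^{-1/2-u-v}\,\Delta^{*}_{2k,N}(l,n).
\end{equation*}
By Theorem~\ref{PetRou} the right-hand side vanishes identically when $p\mid l$ (this is the factor $\id_{(l,p)=1}$ in the statement), and when $(l,p)=1$ and $\nu\ge 3$ only the terms with $(n,p)=1$ survive, where $\Delta^{*}_{2k,N}(l,n)=\Delta_{2k,N}(l,n)-p^{-1}\Delta_{2k,N/p}(l,n)$. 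Substituting Petersson's trace formula (Theorem~\ref{Petersson}), the Kronecker diagonals $\delta(l,n)$ of both pieces combine to produce the main term $(1-1/p)l^{-1/2-u-v}$, which specialises at $u=0$ to the first term on the right of \eqref{theorem nu3 statement}.

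The off-diagonal is the delicate part. I would substitute the Mellin--Barnes representation \eqref{Barnes} of $J_{2k-1}(4\pi\sqrt{ln}/c)$, exchange the $s$-integral with the sums over $c\equiv 0\pmod{N}$ and over $n$ with $(n,p)=1$, and open the Kloosterman sum $Kl(l,n;c)$ as a character sum over residues $x\bmod c$. The inner sum in $n$ then becomes, after a M\"obius removal of the coprimality condition $(n,p)=1$, a finite combination of Lerch zeta functions $\xi(0,\alpha;1/2+u+v+s/2)$ with $\alpha=x^{*}/c$. Applying the Lerch functional equation \eqref{Lerch.func.equat} to each of them, the phases $e(\pm s/4)$ that emerge identify the remaining Mellin integral in $s$ as precisely $I_{\pm}(0,v,k,x)$ of \eqref{int:I}, with argument $x$ essentially $N/(4\pi^{2}l)$ times a Kloosterman-type arithmetic factor. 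The divisibility $p^{2}\mid c$ together with Lemma~\ref{Royer} rules out any contribution from the pole of $\xi(\alpha,0;\cdot)$ at $s=1$ when $\nu\ge 3$, so no extra residue appears in the explicit formula --- this is exactly the point at which the case $\nu=2$ treated in the next section will differ.

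The off-diagonal thus collects into $2\pi i^{2k}\bigl(V_{N}(0,v,k)-p^{-1}V_{N/p}(0,v,k)\bigr)$, where $V_{N}(0,v,k)$ is an absolutely convergent arithmetic sum of values of $I_{\pm}(0,v,k,x)$ whose coefficient part contributes at most $O((lT)^{\epsilon})$. The two branches of \eqref{lemVestimate} then match precisely the small-$x$ and large-$x$ regimes of Lemmas~\ref{lem:Ismallx} and~\ref{lem:Ilargex}: the threshold $x\asymp Te/k$ translates into $4\pi elT\asymp Nk$ after substituting $x\asymp N/(4\pi^{2}l)$, the small-$x$ estimate $x^{-1/2}\max(\sqrt{T}/k,1/\sqrt{k})$ combines with the prefactor $\sqrt{l}/N$ to yield the first branch, and the exponentially decaying large-$x$ bound $(x/T)^{1/2}(eT/(xk))^{k}$ yields the second. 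The identity is then extended from $\Re(u+v)>1/2$ to $u=0$ by analytic continuation, both sides being regular in $u$ near the origin once the pole of $L_{f}$ cancelled against the diagonal main term has been accounted for. The principal obstacle is in the middle paragraph --- cleanly isolating the right Lerch pieces from the $c$-sum under $N\mid c$, carrying the M\"obius inversion through without polluting the main term, and matching the surviving $s$-integral with $I_{\pm}$ so that only the harmless factors $\sqrt{l}/N$ and $(lT)^{\epsilon}$ remain outside the function analysed in Section~\ref{section3}.
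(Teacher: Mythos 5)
Your proposal follows essentially the same route as the paper: open the $L$-series in the region of absolute convergence, apply Theorem \ref{PetRou} and Petersson's formula (with Royer's vanishing lemma disposing of the terms carrying $p\mid n$ once the coprimality condition is removed by inclusion--exclusion, which is exactly where the Lerch pole would otherwise enter for $\nu=2$), then Mellin--Barnes plus the Lerch functional equation to reduce the off-diagonal to the integrals $I_{\pm}$ of Section \ref{section3}, whose small- and large-argument regimes give the two branches of \eqref{lemVestimate}. One small slip: $L_f$ is entire, so the final continuation to $u=0$ involves no ``pole of $L_f$'' to cancel --- it is immediate from the regularity of the explicit formula in $u$.
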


\begin{lem}\label{lemM1=S-S}
Suppose that $\Re{u}>1/2$, $\Re{v}=0$, $k\geq 2$, $N=p^{\nu}$ and $\nu \geq 3$. Then
\begin{equation}
M_1(l,u,v)=\left(S(l,u,v;N)-\frac{1}{p}S(l,u,v;N/p)\right)\id_{(l,p)=1},
\end{equation}
\begin{equation}\label{eq:s}
 S(l,u,v;N)=\sum_{\substack{n=1\\(n,p)=1}}^{\infty}\frac{1}{n^{1/2+u+v}}\Delta_{2k,N}(l,n).
\end{equation}
\end{lem}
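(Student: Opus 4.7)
The strategy is to unfold $M_1(l,u,v)$ against the Dirichlet series of $L_f$, interchange the order of summation, and then apply the primitive--form trace identity of Theorem \ref{PetRou}.

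First, in the half--plane $\Re{u}>1/2$, $\Re{v}=0$ one has $\Re(1/2+u+v)>1$, so the series
$$L_f(1/2+u+v)=\sum_{n\geq 1}\frac{\lambda_f(n)}{n^{1/2+u+v}}$$
converges absolutely, uniformly in $f\in H^{*}_{2k}(N)$ (via the trivial divisor bound $|\lambda_f(n)|\leq\tau(n)\ll n^{\epsilon}$ deduced from the multiplicativity relation \eqref{multiplicity}). This legitimizes interchanging the sums over $n$ and $f$, producing
$$M_1(l,u,v)=\sum_{n\geq 1}\frac{\Delta_{2k,N}^{*}(l,n)}{n^{1/2+u+v}}.$$

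Next I would apply Theorem \ref{PetRou} termwise. With $N=p^{\nu}$ and $\nu\geq 3$, the hypothesis $(N,ln)=1$ reduces to the pair of conditions $(p,l)=1$ and $(p,n)=1$. If $p\mid l$, then $\Delta_{2k,N}^{*}(l,n)=0$ for every $n$, which accounts for the global factor $\id_{(l,p)=1}$ in the claim. Assuming henceforth $(l,p)=1$, only the values $n$ with $(n,p)=1$ contribute, and for those
$$\Delta_{2k,N}^{*}(l,n)=\Delta_{2k,N}(l,n)-\frac{1}{p}\Delta_{2k,N/p}(l,n).$$
Substituting this into the $n$-sum above and splitting by linearity matches exactly the two pieces $S(l,u,v;N)$ and $p^{-1}S(l,u,v;N/p)$ defined in \eqref{eq:s}, giving the asserted identity.

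There is no genuine obstacle in this proof: the one point requiring care is the justification of the termwise interchange, which is immediate from absolute convergence in the stated range of $u$. It is worth noting in passing that each of the series $S(l,u,v;\,\cdot\,)$ itself converges absolutely in the same half--plane; this can be seen by inserting Petersson's formula \eqref{eq:Petersson} and estimating the Kloosterman--Bessel tail via Weil's bound together with the standard decay of $J_{2k-1}$, which is harmless because $k\geq 2$. The truly substantive work, namely the asymptotic evaluation of $S(l,u,v;N)$ and its analytic continuation to $u=0$, is deferred to the subsequent lemmas.
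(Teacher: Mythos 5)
Your argument is correct and is essentially the same as the paper's own (one-display) proof: expand $L_f$ as a Dirichlet series in the region of absolute convergence, interchange the sums over $n$ and $f$, and apply Theorem \ref{PetRou}, with the indicator $\id_{(l,p)=1}$ and the restriction $(n,p)=1$ coming from the vanishing of $\Delta^{*}_{2k,N}(l,n)$ when $p\mid ln$. The only quibble is that the bound $|\lambda_f(n)|\leq\tau(n)$ is Deligne's theorem rather than a consequence of the multiplicativity relation \eqref{multiplicity} alone, but this does not affect the argument.
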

\begin{proof}
Consider
\begin{multline*}
M_1(l,u,v)=\sum_{n=1}^{\infty}\frac{1}{n^{1/2+u+v}}\Delta^{*}_{2k,N}(l,n)=\\
=\sum_{\substack{n=1\\(nl,p)=1}}^{\infty}\frac{1}{n^{1/2+u+v}}\left(\Delta_{2k,N}(l,n)-1/p\Delta_{2k,N/p}(l,n)\right)=\\=
\left(S(l,u,v;N)-\frac{1}{p}S(l,u,v;N/p)\right)\id_{(l,p)=1}.
\end{multline*}
\end{proof}
The condition $(n,p)=1$ can be removed by splitting $S(l,u,v;N)$ into two parts.
\begin{lem}\label{lemS=S1-S2}
One has
 \begin{equation}
   S(l,u,v;N)= S_1(l,u,v;N)-\frac{1}{p^{1/2+u+v}} S_2(l,u,v;N),
 \end{equation}
where
\begin{equation}\label{eq:s1}
S_1(l,u,v;N)= \sum_{n=1}^{\infty}\frac{1}{n^{1/2+u+v}}\Delta_{2k,N}(l,n)
\end{equation}
and
\begin{equation}\label{eq:s2}
S_2(l,u,v;N)= \sum_{n=1}^{\infty}\frac{1}{n^{1/2+u+v}}\Delta_{2k,N}(l,np).
\end{equation}
\end{lem}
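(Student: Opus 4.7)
The plan is to derive the identity by decomposing the sum defining $S_1$ according to the divisibility of the summation index $n$ by $p$, and then matching each piece with $S$ or $S_2$. Under the hypothesis $\Re u > 1/2$ inherited from Lemma~\ref{lemM1=S-S}, the Petersson formula \eqref{eq:Petersson} together with standard bounds on the Bessel function and Kloosterman sums makes the series defining $S_1$, $S$, and $S_2$ absolutely convergent, so all rearrangements below are legitimate.

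First, I would split the full sum in the definition \eqref{eq:s1} of $S_1(l,u,v;N)$ as
\begin{equation*}
S_1(l,u,v;N) = \sum_{\substack{n\geq 1\\(n,p)=1}}\frac{\Delta_{2k,N}(l,n)}{n^{1/2+u+v}} + \sum_{\substack{n\geq 1\\ p\mid n}}\frac{\Delta_{2k,N}(l,n)}{n^{1/2+u+v}}.
\end{equation*}
The first sum on the right is, by definition \eqref{eq:s}, equal to $S(l,u,v;N)$.

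Next, for the second sum I would change variables $n = pm$ with $m$ running over all positive integers:
\begin{equation*}
\sum_{\substack{n\geq 1\\ p\mid n}}\frac{\Delta_{2k,N}(l,n)}{n^{1/2+u+v}} = \frac{1}{p^{1/2+u+v}}\sum_{m=1}^{\infty}\frac{\Delta_{2k,N}(l,pm)}{m^{1/2+u+v}} = \frac{1}{p^{1/2+u+v}}\, S_2(l,u,v;N),
\end{equation*}
using definition \eqref{eq:s2}. Substituting this into the previous display and solving for $S(l,u,v;N)$ yields the claimed identity.

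There is essentially no obstacle here — the argument is purely bookkeeping, and the only technical point worth mentioning is the absolute convergence needed to separate the two ranges and to change variables. Since the assumption $\Re u > 1/2$ from Lemma~\ref{lemM1=S-S} carries over, no further justification is required.
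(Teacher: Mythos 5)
Your argument is correct and is precisely the bookkeeping the paper intends: the paper states this lemma without proof, and the splitting of the full sum over $n$ into the coprime-to-$p$ part (which is $S$) and the part with $p\mid n$ reindexed as $n=pm$ (which is $p^{-(1/2+u+v)}S_2$) is the whole content. The only quibble is your convergence claim: standard Weil/Bessel bounds give $\Delta_{2k,N}(l,n)\ll_l n^{1/4+\epsilon}$, so absolute convergence of these individual series is guaranteed for $\Re u>3/4$ (the threshold the paper itself uses in Lemma~\ref{lem:v}) rather than for $\Re u>1/2$; since the identity is formal and everything is later continued analytically, this does not affect the result.
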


\begin{lem}\label{lemS2=0}
Let $(l,p)=1$, $k\geq 2$, $N=p^{\nu}$ and $\nu \geq 3$. Then
\begin{equation}
 S_2(l,u,v;N)=S_2(l,u,v;N/p)=0.
\end{equation}

\end{lem}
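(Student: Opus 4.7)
The plan is to expand $\Delta_{2k,N}(l,np)$ via the Petersson trace formula (Theorem \ref{Petersson}) and then show that every resulting contribution vanishes term by term in the sum over $n$, so that no convergence or analytic-continuation subtleties ever enter.

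More concretely, for each $n \geq 1$ we write
\begin{equation*}
\Delta_{2k,N}(l,np) = \delta(l,np) + 2\pi i^{-2k}\sum_{c \equiv 0 \Mod{N}} \frac{Kl(l,np;c)}{c}\, J_{2k-1}\!\left(\frac{4\pi\sqrt{lnp}}{c}\right).
\end{equation*}
The first step is to dispose of the diagonal: since $(l,p)=1$ while $p \mid np$, we have $l \neq np$ for every $n \geq 1$, so $\delta(l,np)=0$.

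The second step is to invoke Lemma \ref{Royer} on each Kloosterman sum. Using the symmetry $Kl(l,np;c) = Kl(np,l;c)$, I want to apply the lemma with $m = np$ (so $p \mid m$) and $n' = l$ (so $p \nmid n'$). It remains to check that $p^2 \mid c$ in both cases of interest. For $S_2(l,u,v;N)$ with $N = p^\nu$ and $\nu \geq 3$, the condition $c \equiv 0 \Mod{N}$ forces $p^\nu \mid c$, hence a fortiori $p^2 \mid c$. For $S_2(l,u,v;N/p)$ with $N/p = p^{\nu-1}$ and $\nu - 1 \geq 2$, the condition $c \equiv 0 \Mod{N/p}$ forces $p^{\nu-1} \mid c$, and again $p^2 \mid c$. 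In both cases Lemma \ref{Royer} gives $Kl(np,l;c) = 0$.

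Combining, every Kloosterman contribution vanishes identically, so $\Delta_{2k,N}(l,np) = 0$ and $\Delta_{2k,N/p}(l,np) = 0$ for all $n \geq 1$, whence $S_2(l,u,v;N) = S_2(l,u,v;N/p) = 0$. There is no real obstacle here; the only place care is needed is the bookkeeping on the $p$-adic valuation of $c$, which is exactly where the hypothesis $\nu \geq 3$ (as opposed to $\nu = 2$) becomes essential, since for $\nu = 2$ the shifted level $N/p = p$ would only force $p \mid c$ and not $p^2 \mid c$, and Lemma \ref{Royer} could not be applied. This is why the case $\nu = 2$ must be treated separately in a later section.
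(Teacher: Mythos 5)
Your proposal is correct and follows essentially the same route as the paper: expand $S_2$ via the Petersson trace formula, kill the diagonal using $(l,p)=1$, and kill every Kloosterman sum via Lemma \ref{Royer} after checking that $p^2\mid c$ for both levels $N$ and $N/p$ when $\nu\geq 3$. Your explicit mention of the symmetry $Kl(l,np;c)=Kl(np,l;c)$ needed to match the hypotheses of Lemma \ref{Royer} is a small point the paper leaves implicit, but otherwise the arguments coincide.
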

\begin{proof}
 Using Petersson's trace formula \eqref{eq:Petersson}, we obtain
\begin{multline*}
 S_2(l,u,v;N)=\sum_{n=1}^{\infty}\frac{\delta(l,np)}{n^{1/2+u+v}}+2\pi i^{2k}
\sum_{n=1}^{\infty}\frac{1}{n^{1/2+u+v}}\times \\ \times \sum_{ q\equiv 0 \Mod{N}}\frac{Kl(l,np;q)}{q}J_{2k-1}
\left( \frac{4\pi \sqrt{lnp}}{q}\right).
\end{multline*}
Since $(l,p)=1$, we have $\delta(l,np)=0$.
Note that $N/p\geq p^2$ because $N=p^{\nu}$, $\nu \geq 3$. Thus, $q \equiv 0 \Mod{p^2}$ in the expressions for $S_2(l,u,v;N)$ and $S_2(l,u,v;N/p)$.
Further, one has $(l,p)=1$. Therefore, lemma \ref{Royer} implies $Kl(l,pn;q)=0$.
The assertion follows.
\end{proof}

\begin{lem}\label{lem:v}
Let $k\geq 2$, $N=p^{\nu}$ and $\nu \geq 3$. Let $\Re{u}>3/4$ and $\Re{v}=0$. Then
\begin{equation}\label{eq:s11}
   S_1(l,u,v;N)=\frac{1}{l^{1/2+u+v}}+2\pi i^{2k}V_N(u,v,k),
\end{equation}
where
\begin{equation}\label{eq:V}
 V_N(u,v,\lambda)=\sum_{n=1}^{\infty}\sum_{q\equiv 0 \Mod{N}}\frac{Kl(l,n;q)}{n^{1/2+u+v}q}J_{2\lambda-1}\left( 4\pi \frac{\sqrt{ln}}{q}\right).
\end{equation}
The series in \eqref{eq:V} over $n$ and $q$ converge absolutely for $\Re{\lambda}>3/4$, $\Re{u}>3/4$.
\end{lem}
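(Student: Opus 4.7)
The plan is to substitute Petersson's trace formula \eqref{eq:Petersson} directly into the Dirichlet series \eqref{eq:s1} defining $S_1$. For $\Re u > 3/4$ this series converges absolutely: by Deligne's bound one has $|\lambda_f(n)|\le\tau(n)$, and the positivity of the harmonic weights together with lemma \ref{tracebound} yield $\Delta_{2k,N}(l,n)\ll_l n^{\epsilon}$, so termwise substitution is legitimate. The diagonal piece $\sum_n \delta(l,n)/n^{1/2+u+v}$ collapses to $l^{-1/2-u-v}$, and the off-diagonal piece becomes, using $i^{-2k}=i^{2k}$,
\begin{equation*}
 2\pi i^{2k}\sum_{n\ge 1}\frac{1}{n^{1/2+u+v}}\sum_{q\equiv 0\Mod{N}}\frac{Kl(l,n;q)}{q}J_{2k-1}\!\left(\frac{4\pi\sqrt{ln}}{q}\right),
\end{equation*}
which matches $2\pi i^{2k}V_N(u,v,k)$ after the $n$- and $q$-summations are exchanged.

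The remaining task is to verify that the double sum defining $V_N(u,v,\lambda)$ converges absolutely throughout the region $\Re\lambda>3/4$, $\Re u>3/4$; Fubini then justifies the interchange. To establish this I would combine Weil's bound $|Kl(l,n;q)|\le\tau(q)(l,n,q)^{1/2}q^{1/2}$ with the classical Bessel estimate $|J_{2\lambda-1}(y)|\ll\min(y^{2\Re\lambda-1},\,y^{-1/2})$ (valid for $\Re\lambda>1/2$), and split the double sum according to whether $y:=4\pi\sqrt{ln}/q\le 1$ or $y>1$. In the first regime the summand is majorised by a constant times $\tau(q)(l,n,q)^{1/2} l^{\Re\lambda-1/2} n^{\Re\lambda-1-\Re u} q^{1/2-2\Re\lambda}$, whose $q$-tail $\sum_{q\equiv 0\Mod{N}}\tau(q) q^{1/2-2\Re\lambda}$ converges precisely when $\Re\lambda>3/4$. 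In the second regime the $q$-range is finite for each $n$, and the Bessel bound $y^{-1/2}$ gives a summand majorised by $\tau(q)(l,n,q)^{1/2} l^{-1/4} n^{-3/4-\Re u}$, which is summable in $q$ (by a divisor estimate) and in $n$ under $\Re u>3/4$.

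The main obstacle is the tight exponent bookkeeping in this absolute-convergence estimate. The threshold $\Re\lambda>3/4$ is sharp: it arises from balancing the Weil factor $q^{1/2}$ against the $q^{1-2\Re\lambda}$ decay coming from the Bessel function in the tail $q\to\infty$, and the threshold $\Re u>3/4$ is then forced by the residual $n$-summation in both regimes. Once Fubini applies, the identity \eqref{eq:s11} is immediate from the Petersson expansion, and the formula for $V_N$ in \eqref{eq:V} is precisely the expression of the off-diagonal term with the $q$-sum placed outermost.
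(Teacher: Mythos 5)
Your proposal is correct and follows essentially the same route as the paper: substitute Petersson's trace formula, read off the diagonal term $l^{-1/2-u-v}$, and establish absolute convergence of the off-diagonal double sum by combining the Weil bound with the Bessel estimate $J_{2\lambda-1}(y)\ll\min(y^{2\Re\lambda-1},y^{-1/2})$ and splitting the $q$-sum at $q\asymp\sqrt{ln}$, which yields exactly the thresholds $\Re\lambda>3/4$ and $\Re u>3/4$. The only differences are cosmetic (you retain the $\tau(q)$ and $(l,n,q)^{1/2}$ factors that the paper absorbs into an $l$-dependent constant, and you are slightly more explicit about the Fubini step).
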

\begin{proof}
Equation \eqref{eq:s11} follows from Petersson's trace formula \eqref{eq:Petersson}.
 Estimating the Bessel function and the Kloosterman sum, we have
\begin{multline*}
 V_N(u,v,\lambda)\ll \sum_{n=1}^{\infty}\frac{1}{n^{1/2+\Re{u}}}\sum_{q=1}^{\infty}\frac{1}{\sqrt{q}}
\min \left(\left(\frac{\sqrt{ln}}{q} \right)^{2\Re{\lambda}-1}, \left(\frac{q}{\sqrt{ln}} \right)^{1/2} \right)\\
=\sum_{n=1}^{\infty}\frac{1}{n^{1/2+\Re{u}}}\sum_{q<\sqrt{ln}}\frac{1}{(ln)^{1/4}}+
\sum_{n=1}^{\infty}\frac{1}{n^{1/2+\Re{u}}}\sum_{q>\sqrt{ln}}\frac{(ln)^{\lambda-1/2}}{q^{2\lambda-1/2}}.
\end{multline*}
Thus, the series over $q$ in the second sum is convergent for $2\lambda-1/2>1$.
Evaluating the sum over $q$ we have
\begin{equation*}
  V_N(u,v,\lambda)\ll_{l} \sum_{n=1}^{\infty}\frac{1}{n^{1/4+\Re{u}}}+\sum_{n=1}^{\infty}\frac{n^{\lambda-1/2}}{n^{1/2+\Re{u}}n^{\lambda-3/4}}.
\end{equation*}
Taking $\Re{u}>3/4$ the series over $n$ is absolutely convergent.
\end{proof}

Lemma \ref{lem:v} allows changing the order of summation
\begin{equation}
 V_{N}(u,v,\lambda)=\sum_{q \equiv 0\Mod{N}}\frac{1}{q}V_{N}(u,v,\lambda;q),
\end{equation}
\begin{equation}
 V_{N}(u,v,\lambda;q)=\sum_{n=1}^{\infty}\frac{1}{n^{1/2+u+v}}Kl(l,n;q)J_{2\lambda-1}\left( 4\pi \frac{\sqrt{ln}}{q}\right).
\end{equation}
For $\Re{s}>1-\Re{v}$ let
\begin{equation}
 g(s,v;q)=\sum_{n=1}^{\infty}\frac{Kl(l,n;q)}{n^{s+v}}.
\end{equation}
Note that
\begin{equation}
 g(s,v;1)=\zeta(s+v).
\end{equation}
The next step is to apply the Mellin-Barnes representation for the Bessel function.
\begin{lem}
For $\Re{u}>3/4$, $\Re{\lambda}>3/4$
 \begin{multline}\label{eq:vn}
  V_{N}(u,v,\lambda;q)=\frac{1}{4\pi i}\int_{(\Delta)}
\frac{\Gamma(\lambda-1/2+s/2)}{\Gamma(\lambda+1/2-s/2)}\times \\ \times g(1/2+u+s/2,v;q)\left(\frac{q}{2\pi\sqrt{l}} \right)^sds,
 \end{multline}
where $1-2\Re{\lambda}<\Delta<0$, $1-2\Re{u}<\Delta<0$.
\end{lem}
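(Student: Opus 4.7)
The plan is to substitute the Mellin--Barnes representation \eqref{Barnes} of the Bessel function into the definition of $V_N(u,v,\lambda;q)$ and then swap the order of summation and contour integration. With $y=4\pi\sqrt{ln}/q$ in \eqref{Barnes}, one has $(y/2)^{-s}=(q/(2\pi\sqrt{l}))^s\, n^{-s/2}$, so formally
\begin{equation*}
V_N(u,v,\lambda;q)=\frac{1}{4\pi i}\int_{(\Delta)}\frac{\Gamma(\lambda-1/2+s/2)}{\Gamma(\lambda+1/2-s/2)}\left(\frac{q}{2\pi\sqrt{l}}\right)^s\sum_{n=1}^{\infty}\frac{Kl(l,n;q)}{n^{1/2+u+v+s/2}}\,ds,
\end{equation*}
and the inner sum is exactly $g(1/2+u+s/2,v;q)$ by its definition.

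The only real content of the lemma is therefore the justification of Fubini's theorem for this double sum/integral. I would verify it directly on the vertical line $\Re s=\Delta$. Writing $s=\Delta+i\tau$ and applying Stirling's formula, the factors $e^{-\pi|\tau|/4}$ coming from the two Gamma functions cancel, leaving
\begin{equation*}
\left|\frac{\Gamma(\lambda-1/2+s/2)}{\Gamma(\lambda+1/2-s/2)}\right|\ll (1+|\tau|)^{\Delta-3/2}.
\end{equation*}
For the inner series, the trivial bound $|Kl(l,n;q)|\le q$ gives absolute convergence of $\sum_n Kl(l,n;q)n^{-1/2-u-v-s/2}$ provided $\Re(1/2+u+s/2+v)>1$, which, using $\Re v=0$, is the hypothesis $\Delta>1-2\Re u$. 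The constraint $\Delta>1-2\Re\lambda$ is exactly what \eqref{Barnes} requires for the Mellin--Barnes representation to hold, and $\Delta<0$ is needed there as well.

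Combining these two estimates, the iterated integral of the absolute value is bounded by
\begin{equation*}
C(q,l,u,v)\int_{-\infty}^{\infty}(1+|\tau|)^{\Delta-3/2}\,d\tau<\infty,
\end{equation*}
since $\Delta-3/2<-3/2$. Hence Fubini applies and the interchange is legitimate, yielding the asserted formula \eqref{eq:vn}.

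The step requiring most care is the bookkeeping of the three constraints on $\Delta$: two coming from the Mellin--Barnes formula \eqref{Barnes} for the Bessel function ($1-2\Re\lambda<\Delta<0$) and one from absolute convergence of the Dirichlet series $g$ ($\Delta>1-2\Re u$). Under the assumptions $\Re u>3/4$ and $\Re\lambda>3/4$ inherited from lemma \ref{lem:v}, all three constraints are simultaneously satisfiable (e.g.\ $\Delta$ slightly negative but close to $0$), which is why those hypotheses are imposed.
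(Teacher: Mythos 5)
Your proposal is correct and is exactly the argument the paper intends: the lemma is stated there without proof, immediately after the remark that the Mellin--Barnes representation \eqref{Barnes} is to be applied, and the interchange is justified just as you do (absolute convergence of $g$ on the line $\Re s=\Delta>1-2\Re u$ plus decay of the Gamma ratio). One small slip: on $\Re s=\Delta$ Stirling gives $\bigl|\Gamma(\lambda-1/2+s/2)/\Gamma(\lambda+1/2-s/2)\bigr|\ll(1+|\tau|)^{\Delta-1}$ rather than $(1+|\tau|)^{\Delta-3/2}$ (the exponential factors and the two exponents $-1/2$ both cancel in the ratio), but since $\Delta<0$ this is still integrable and your Fubini argument goes through unchanged.
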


Using the properties of the Lerch zeta function we prove the functional equation and analytic continuation for $ g(s,v;q)$.
\begin{lem}\label{lem:g}
For $q \in \N$ and $ v \in \C$ the function $g(s,v;q)$ can be analytically continued on the whole complex plane as a function of complex variable $s$. Furthermore, for $\Re{(s+v)}<0$  one has
\begin{multline}
 g(s,v;q)=\Gamma(1-s-v)\left( \frac{2\pi}{q}\right)^{s+v-1}
\Biggl( e\left(\frac{1-s-v}{4} \right)  \times \\\times \sum_{\substack{n=1\\(n,q)=1}}^{\infty}\frac{e(n^{*}lq^{-1})}{n^{1-s-v}}+
e\left(-\frac{1-s-v}{4}\right) \sum_{\substack{n=1\\(n,q)=1}}^{\infty}\frac{e(-n^{*}lq^{-1})}{n^{1-s-v}}\Biggr),
\end{multline}
where $nn^{*}\equiv1\Mod{q}.$
\end{lem}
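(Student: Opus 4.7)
My plan is to open up the Kloosterman sum appearing in the definition of $g(s,v;q)$ and recognise the resulting inner Dirichlet series as Lerch zeta functions. Interchanging summations in the half-plane $\Re(s+v)>1$ yields
\[
g(s,v;q)=\sum_{\substack{x\bmod q\\(x,q)=1}}e\!\left(\frac{lx}{q}\right)\zeta\!\left(0,\frac{x^{*}}{q};\,s+v\right).
\]
Since $(x^{*},q)=1$ and $q\ge 2$ for every $q$ relevant to the application (indeed $q\equiv 0\Mod{N}$ with $N=p^{\nu}$, $\nu\ge 3$), none of the fractions $x^{*}/q$ is an integer, so each Lerch zeta $\zeta(0,x^{*}/q;\cdot)$ is entire on $\C$. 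Consequently $g(s,v;q)$ inherits an entire analytic continuation in $s$, settling the first claim of the lemma.

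To derive the explicit formula in the half-plane $\Re(s+v)<0$, I apply the functional equation \eqref{Lerch.func.equat} to each Lerch zeta above with $\alpha=x^{*}/q$, writing the argument $s+v$ as $1-s'$ where $s'=1-s-v$. This produces two Hurwitz-type sums, which in the range $\Re s'>1$ admit the absolutely convergent expansions
\[
\zeta(x^{*}/q,0;s')=q^{s'}\sum_{m\ge 0}\frac{1}{(mq+x^{*})^{s'}},\qquad \zeta(-x^{*}/q,0;s')=q^{s'}\sum_{m\ge 1}\frac{1}{(mq-x^{*})^{s'}}.
\]
The crucial step is the reindexing $n=mq+x^{*}$ and $n=mq-x^{*}$, each of which establishes a bijection between pairs $(m,x^{*})$ (with $x^{*}$ ranging over a reduced residue system modulo $q$) and positive integers $n$ coprime to $q$. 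Using $xx^{*}\equiv 1\Mod{q}$, the congruence $n\equiv x^{*}\Mod{q}$ forces $x\equiv n^{*}\Mod{q}$, while $n\equiv -x^{*}\Mod{q}$ forces $x\equiv -n^{*}\Mod{q}$. After substitution, the finite outer sum over $x$ merges with the inner sum over $m$ into $\sum_{(n,q)=1}e(\pm ln^{*}/q)n^{-s'}$, and assembling the prefactor $\Gamma(s')(2\pi)^{-s'}q^{s'}=\Gamma(1-s-v)(2\pi/q)^{s+v-1}$ yields the stated identity.

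The main technical point is the combinatorial bookkeeping of the reindexing: one must verify that the parametrisations $n=mq\pm x^{*}$ traverse every positive integer coprime to $q$ exactly once (the ranges $m\ge 0$ versus $m\ge 1$ being chosen precisely to enforce $n\ge 1$), and that the inverse relation between $x$ and $n^{*}$ carries the correct sign through the exponential $e(lx/q)$. Once that is done, everything else is a direct application of \eqref{Lerch.func.equat} together with the absolute convergence available in each half-plane under consideration.
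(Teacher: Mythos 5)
Your proposal is correct and follows essentially the same route as the paper's proof: open the Kloosterman sum into Lerch zeta values $\zeta(0,b/q;s+v)$, apply the functional equation \eqref{Lerch.func.equat}, and reindex the resulting Hurwitz-type sums over reduced residue classes into a single sum over $n$ coprime to $q$ with the sign of $n^{*}$ tracked through $e(lx/q)$. Your explicit observation that $q\ge 2$ in the application, so that no Lerch pole arises and the continuation is entire, is a point the paper leaves implicit (for $q=1$ one has $g(s,v;1)=\zeta(s+v)$, which does have a pole).
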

\begin{proof}
Consider
\begin{multline*}
 g(s,v;q)=\sum_{a,b=1}^{q}\delta_q(ab-1)e\left( \frac{al}{q}\right)\sum_{n=1}^{\infty}\frac{e(bn/q)}{n^{s+v}}=\\
=\sum_{a,b=1}^{q}\delta_q(ab-1)e\left(\frac{al}{q} \right)\zeta(0,b/q;s+v).
\end{multline*}
Applying the functional equation for the Lerch zeta function we have
\begin{multline*}
 g(s,v;q)
=\frac{\Gamma(1-s-v)}{(2\pi)^{1-s-v}}e\left( \frac{1-s-v}{4}\right)\times \\ \times \sum_{a,b=1}^{q}\delta_q(ab-1)e\left(\frac{al}{q} \right)\zeta(b/q,0;1-s-v)+
\frac{\Gamma(1-s-v)}{(2\pi)^{1-s-v}}\times \\ \times e\left(- \frac{1-s-v}{4}\right) \sum_{a,b=1}^{q}\delta_q(ab-1)e\left(\frac{al}{q} \right)\zeta(-b/q,0;1-s-v).
\end{multline*}

Note that
\begin{equation*}
\zeta(b/q,0;z)=\sum_{n+b/q>0}\frac{1}{(n+b/q)^z}=
q^z\sum_{\substack{n\equiv b \Mod{q}\\n>0}}\frac{1}{n^z}.
\end{equation*}
Therefore,
\begin{multline*}
 \sum_{a,b=1}^{q}\delta_q(ab-1)e\left(\frac{al}{q}\right)\zeta(b/q,0,1-s-v)=
q^{1-s-v}\sum_{n=1}^{\infty}\frac{1}{n^{1-s-v}}\times \\ \times \sum_{\substack{a,b=1\\b \equiv n \Mod{q}}}^{q}\delta_q(ab-1)e\left( \frac{al}{q}\right)
=q^{1-s-v}\sum_{\substack{n=1\\(n,q)=1}}^{\infty}\frac{e(n^{*}lq^{-1})}{n^{1-s-v}}.
\end{multline*}
Analogously,
\begin{multline*}
 \sum_{a,b=1}^{q}\delta_q(ab-1)e\left(\frac{al}{q}\right)\zeta(-b/q,0,1-s-v)=\\
=q^{1-s-v}\sum_{\substack{n=1\\(n,q)=1}}^{\infty}\frac{e(-n^{*}lq^{-1})}{n^{1-s-v}}.
\end{multline*}

\end{proof}

Now we can express $ V_N(u,v, \lambda)$ in terms of the integrals $I_{\pm}(u,v,\lambda,x)$ studied in section \ref{section3}.
\begin{lem}\label{lemV=}
For
$\Re \lambda-1>\Re u>3/4$  and  $\Re v=0$ one has
 \begin{multline}\label{eq:vnuv}
  V_N(u,v, \lambda)=\sum_{q\equiv 0 \Mod{N}}\frac{1}{q^{1/2+u+v}}
\sum_{\substack{n=1\\(n,q)=1}}^{\infty}\frac{e(n^{*}lq^{-1})}{n^{1/2-u-v}}(2\pi)^{u+v-1/2}\times \\ \times
e \left(\frac{1/2-u-v}{4}\right)I_-\left(u,v,\lambda,\frac{qn}{2\pi l}\right) +
\sum_{q\equiv 0 \Mod{N}}\frac{1}{q^{1/2+u+v}}\times \\ \times
\sum_{\substack{n=1\\(n,q)=1}}^{\infty}\frac{e(-n^{*}lq^{-1})}{n^{1/2-u-v}}(2\pi)^{u+v-1/2}
e \left(-\frac{1/2-u-v}{4}\right)I_+\left(u,v,\lambda,\frac{qn}{2\pi l}\right),
 \end{multline}
where  $1-2\Re{\lambda}<\Delta<-1-2\Re{u}$ and
\begin{multline}\label{Idef}
 I_{\pm}(u,v,\lambda,x)=\frac{1}{4\pi i}\int_{(\Delta)} \frac{\Gamma(\lambda-1/2+s/2)}{\Gamma(\lambda+1/2-s/2)}\times \\ \times\Gamma(1/2-u-v-s/2)
\left(x \right)^{s/2}e\left( \pm \frac{s}{8}\right)ds.
\end{multline}

\end{lem}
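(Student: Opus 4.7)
The plan is to begin from the Mellin--Barnes expression (\ref{eq:vn}) for $V_N(u,v,\lambda;q)$, in which the contour lies on $\Re s = \Delta_0$ with $\max(1-2\Re\lambda,\,1-2\Re u) < \Delta_0 < 0$, and to shift it leftwards onto a line $\Re s = \Delta$ in the strip $1-2\Re\lambda < \Delta < -1-2\Re u$. This strip is non-empty precisely because we assume $\Re\lambda - 1 > \Re u$, and on the new line one has $\Re(1/2+u+s/2+v) < 0$, which is exactly the regime in which the functional equation of Lemma \ref{lem:g} converts the series defining $g$ into a pair of absolutely convergent twisted Dirichlet series.

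Before shifting I would check that no singularities of the integrand are crossed. The poles of $\Gamma(\lambda-1/2+s/2)$ sit at $s = 1-2\lambda-2j$ for $j \ge 0$, all to the left of $1-2\Re\lambda < \Delta$; the denominator $\Gamma(\lambda+1/2-s/2)$ introduces none; and $g(1/2+u+s/2,v;q)$ is entire in $s$ whenever $q \ge 2$, because the only potential pole of the Lerch $\zeta(0,b/q;\,\cdot\,)$ arising in the proof of Lemma \ref{lem:g} would require $b \equiv 0 \pmod{q}$, which is incompatible with the side condition $ab \equiv 1 \pmod{q}$ unless $q = 1$. Here $q \equiv 0 \pmod{N}$ with $N = p^{\nu} \ge p^3 \ge 8$, so this degenerate case does not occur.

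On the shifted contour I would apply Lemma \ref{lem:g} to $g(1/2+u+s/2, v; q)$. Splitting the phase via
\begin{equation*}
e\!\left(\pm\frac{1/2-u-v-s/2}{4}\right) = e\!\left(\pm\frac{1/2-u-v}{4}\right)e\!\left(\mp\frac{s}{8}\right)
\end{equation*}
and using the identity
\begin{equation*}
\left(\frac{q}{2\pi}\right)^{-s/2}\!\left(\frac{q}{2\pi\sqrt{l}}\right)^{s} n^{s/2} = \left(\frac{qn}{2\pi l}\right)^{s/2},
\end{equation*}
collapses every $s$-dependent factor into the integrand of $I_{\mp}(u,v,\lambda, qn/(2\pi l))$ as defined in (\ref{Idef}); the sign inversion means that the $\pm$ branch of the functional equation is paired with $I_{\mp}$. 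The remaining $s$-independent factors $(q/(2\pi))^{1/2-u-v}$, $e(\pm(1/2-u-v)/4)$, and the arithmetic coefficients $e(\pm n^{*}lq^{-1})/n^{1/2-u-v}$ all factor outside the inner integral.

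The only analytic point is Fubini: on $\Re s = \Delta < -1-2\Re u$ each transformed Dirichlet series converges absolutely with exponent strictly greater than $1$, while the gamma-ratio combined with $\Gamma(1/2-u-v-s/2)$ gives exponential decay in $|\Im s|$, so the sum--integral exchange is legitimate. Summing $1/q$ over $q \equiv 0 \pmod{N}$ and merging the constants via $\frac{1}{q}\left(\frac{q}{2\pi}\right)^{1/2-u-v} = \frac{(2\pi)^{u+v-1/2}}{q^{1/2+u+v}}$ then yields (\ref{eq:vnuv}). The main obstacle is not analytic but combinatorial: one must track four sign conventions (two $\pm$ from the functional equation and two from $I_{\pm}$) to ensure that each branch of Lemma \ref{lem:g} is paired with the correct $I_{\mp}$ and that the external phases $e(\pm(1/2-u-v)/4)$ attach to the matching Kloosterman-type sum.
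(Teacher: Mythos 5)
Your proposal is correct and follows essentially the same route as the paper: shift the contour in \eqref{eq:vn} into the strip $1-2\Re\lambda<\Delta<-1-2\Re u$, apply the functional equation of Lemma \ref{lem:g} to $g(1/2+u+s/2,v;q)$, and justify the interchange of summation and integration via the exponential decay of the gamma factors and the absolute convergence of the transformed Dirichlet series. Your sign bookkeeping (pairing the $\pm$ branch of the functional equation with $I_{\mp}$) and the observation that $g(\cdot,v;q)$ is entire for $q\ge 2$ match the paper's argument.
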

\begin{proof}
 Moving the contour of integration in \eqref{eq:vn} to $1-2\Re{\lambda}<\Delta<-1-2\Re{u}$
\begin{multline*}
 V_N(u,v, \lambda)=\frac{1}{4\pi i}\int_{(\Delta)}\frac{\Gamma(\lambda-1/2+s/2)}{\Gamma(\lambda+1/2-s/2)}\times \\ \times \sum_{q\equiv 0 \Mod{N}}
g(1/2+u+s/2,v;q)\frac{1}{q}\left(\frac{q}{\pi \sqrt{l}} \right)^{s}ds.
\end{multline*}
By Lemma \ref{lem:g} we have
\begin{multline*}
 V_N(u,v, \lambda)=\frac{1}{4\pi i}\int_{(\Delta)}\frac{\Gamma(\lambda-1/2+s/2)}{\Gamma(\lambda+1/2-s/2)}\Gamma(1/2-u-v-s/2)\times \\ \times
\sum_{q \equiv 0\Mod{N}}\frac{1}{q}\left( \frac{2\pi}{q}\right)^{s/2+u+v-1/2}
\left( \frac{q}{2\pi \sqrt{l}}\right)^s\times\\ \times
\left( e\left( \frac{1/2-u-v-s/2}{4}\right)\sum_{\substack{n=1\\(n,q)=1}}^{\infty}\frac{e(n^{*}lq^{-1})}{n^{1/2-u-s/2-v}}+\right.\\ \left.
+ e\left(- \frac{1/2-u-v-s/2}{4}\right)\sum_{\substack{n=1\\(n,q)=1}}^{\infty}\frac{e(-n^{*}lq^{-1})}{n^{1/2-u-s/2-v}}\right) ds.
\end{multline*}
Note that for $-u+\Delta/2<0$ the integral over $s$ converges since
\begin{equation*}
 \frac{\Gamma(\lambda-1/2+s/2)}{\Gamma(\lambda+1/2-s/2)}\Gamma(1/2-u-v-s/2)\asymp \Re{s}^{-1-u+\Delta/2}e^{-\pi|\Im{s}|/4}.
\end{equation*}
If $1/2-u-\Delta/2>1$, then the series over $n$ and $q$ are convergent as well and, therefore, we can change the order of summation and integration.
The assertion follows.
\end{proof}
Finally,  explicit formula \eqref{theorem nu3 statement} follows from lemmas \ref{lemM1=S-S}, \ref{lemS=S1-S2}, \ref{lemS2=0}, \ref{lem:v} and \ref{lemV=}
by means of analytic continuation to  $u=0,\lambda=k\geq2.$  We are left to prove estimate \eqref{lemVestimate}.
\begin{lem}\label{lem:estv}
Let $k\geq 2 $, $t \in \R$ and $T:=1+|t|$. Let $d_0=4\pi e\frac{lT}{Nk}$. Then
\begin{equation}
 V_N(0,it,k)\ll \begin{cases}
                 \frac{l^{1/2}}{N}d_{0}^{\epsilon}\max{\left( \frac{\sqrt{T}}{k},\frac{1}{\sqrt{k}}\right)}, & d_0\geq 1\\
                 \frac{1}{\sqrt{lT}}\left( \frac{d_0}{2}\right)^k, & d_0<1.
                \end{cases}
\end{equation}

\end{lem}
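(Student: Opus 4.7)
The plan is to specialize the explicit formula \eqref{eq:vnuv} to $u=0$, $v=it$, $\lambda=k\geq 2$ (the formula extends to this point by analytic continuation, which will be justified a posteriori from the absolute convergence of the double sums to be estimated), and then to bound the resulting double sum termwise. Without loss of generality assume $t\geq 0$ so that $T=1+t$. First I note that the prefactors $e\bigl((1/2-it)/4\bigr)$ and $e\bigl(-(1/2-it)/4\bigr)$ multiplying $I_-$ and $I_+$ have absolute values $e^{\pi t/2}$ and $e^{-\pi t/2}$ respectively, which precisely counterbalance the exponential weights appearing in the estimates of section \ref{section3}: for the small-$x$ bound of Lemma \ref{lem:Ismallx} one has $e^{\pi t/2}\cdot e^{-\pi T/2}\ll 1$ for the $I_-$ contribution and $e^{-\pi t/2}\cdot e^{\pi t/2}=1$ for the $I_+$ contribution, and similarly the factor $e^{-\pi T/2}$ in Lemma \ref{lem:Ilargex} absorbs either sign. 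After this absorption, the Kloosterman-type phases $e(\pm n^{*}l/q)$ are bounded trivially by $1$, so it remains to estimate
\[
 \sum_{q\equiv 0\Mod{N}}\frac{1}{\sqrt q}\sum_{\substack{n\geq 1\\(n,q)=1}}\frac{1}{\sqrt n}\Bigl|I_{\pm}\!\Bigl(0,it,k,\tfrac{qn}{2\pi l}\Bigr)\Bigr|.
\]

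Writing $q=Nq'$ and setting $x=qn/(2\pi l)$, the transition $x=Te/k$ between Lemmas \ref{lem:Ismallx} and \ref{lem:Ilargex} corresponds exactly to $nq'=d_0/2$. When $d_0\geq 2$ one splits the double sum into a small-$x$ range $\{nq'\leq d_0/2\}$ and a large-$x$ range $\{nq'>d_0/2\}$; when $d_0<2$ only the large-$x$ range is present. I will treat the two regimes separately and then combine.

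In the small-$x$ range, Lemma \ref{lem:Ismallx} gives $|I_{\pm}|\ll x^{-1/2}\max(\sqrt T/k,1/\sqrt k)$, and the identity $q^{-1/2}n^{-1/2}x^{-1/2}=\sqrt{2\pi l}/(qn)$ reduces the contribution to
\[
 \frac{\sqrt l}{N}\,\max\!\Bigl(\tfrac{\sqrt T}{k},\tfrac{1}{\sqrt k}\Bigr)\sum_{q'\geq 1}\frac{1}{q'}\sum_{n\leq d_0/(2q')}\frac{1}{n}\ll \frac{\sqrt l}{N}\,(\log d_0)^{2}\,\max\!\Bigl(\tfrac{\sqrt T}{k},\tfrac{1}{\sqrt k}\Bigr),
\]
which is the claimed bound with $d_0^{\epsilon}$ absorbing the logarithms. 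In the large-$x$ range, Lemma \ref{lem:Ilargex} gives $|I_{\pm}|\ll\sqrt{x/T}(eT/(xk))^{k}$, and the factor $\sqrt{x/T}\cdot q^{-1/2}n^{-1/2}=1/\sqrt{2\pi lT}$ reduces the contribution to
\[
 \frac{1}{\sqrt{2\pi lT}}\sum_{nq'>d_0/2}\Bigl(\frac{d_0}{2nq'}\Bigr)^{k}.
\]
When $d_0<1$ the constraint is automatic and the double sum is bounded absolutely by $(d_0/2)^{k}\sum_{q',n\geq 1}(q'n)^{-k}\ll (d_0/2)^{k}$, since $k\geq 2$; this yields the second case of the lemma. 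When $d_0\geq 1$, splitting at $q'\leq d_0/2$ and $q'>d_0/2$ and using $\sum_{n>M}n^{-k}\ll M^{1-k}$ shows that this contribution is $\ll d_0\log d_0/\sqrt{lT}=O(\sqrt{lT}\,(\log d_0)/(Nk))$, which is absorbed by the small-$x$ bound.

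The main obstacle is purely bookkeeping: one must carefully pair the prefactors $e(\pm(1/2-it)/4)$ with the exponentially $t$-dependent estimates on $I_{\pm}$ from section \ref{section3}, noting that the asymmetric decay between $I_-$ and $I_+$ in Lemma \ref{lem:Ismallx} is irrelevant after cancellation, and precisely identifying the split point $nq'=d_0/2$ in terms of the natural parameter $d_0=4\pi elT/(Nk)$.
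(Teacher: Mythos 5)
Your proposal is correct and follows essentially the same route as the paper: specialize \eqref{eq:vnuv} to $u=0$, $v=it$, bound the phases trivially, and split the resulting sum at the transition point dictated by $d_0$, applying Lemma \ref{lem:Ismallx} in the small-$x$ range and Lemma \ref{lem:Ilargex} in the large-$x$ range (the paper merely collapses $q'n$ into a single variable $d$, so the divisor function produces the $d^{\epsilon}$ where you get $(\log d_0)^2$). Your bookkeeping of the $e^{\pm\pi t/2}$ prefactors against the exponential weights in the section \ref{section3} estimates, and your absorption of the large-$x$ tail $\ll d_0\log d_0/\sqrt{lT}\ll\sqrt{lT}\log d_0/(Nk)$ into the first case, both check out.
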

\begin{proof}
 Estimating \eqref{eq:vnuv} one has
\begin{equation*}
 V_N(0,v, k)\ll \frac{1}{N^{1/2}}\sum_{d=1}^{\infty}\frac{d^{\epsilon}}{d^{1/2}}
e^{\pm \pi t/2}I_{\pm}\left(0,v,k,\frac{dN}{2\pi l}\right).
\end{equation*}
We apply lemmas \ref{lem:Ilargex} and \ref{lem:Ismallx} to bound the last expression.
For $d_0\geq 1$
\begin{multline*}
 V_N(0,v, k)\ll \frac{1}{\sqrt{N}}\sum_{d\leq d_0}\frac{d^{\epsilon}}{d^{1/2}}
\left(\frac{l}{dN} \right)^{1/2}\max{\left( \frac{\sqrt{T}}{k},\frac{1}{\sqrt{k}}\right)}+\\
+\frac{1}{\sqrt{N}}\sum_{d> d_0}\frac{d^{\epsilon}}{d^{1/2}}
\left(\frac{dN}{lT} \right)^{1/2}\left( \frac{2\pi e lT}{dNk}\right)^k\ll  \\
\ll \frac{\sqrt{l}}{N}d_{0}^{\epsilon}\max{\left( \frac{\sqrt{T}}{k},\frac{1}{\sqrt{k}}\right)}+
\frac{d_{0}^{\epsilon}\sqrt{lT}}{Nk2^k}.
\end{multline*}
For $d_0<1$
\begin{multline*}
 V_N(0,v, k)\ll \frac{1}{\sqrt{N}}\sum_{d=1}^{\infty}
\frac{d^{\epsilon}}{d^{1/2}}\left( \frac{dN}{lT}\right)^{1/2}
\left( \frac{2\pi elT}{dNk}\right)^k
\ll \frac{1}{\sqrt{lT}}\left( \frac{d_0}{2}\right)^k.
\end{multline*}
\end{proof}

\section{Asymptotic formula for the first moment: $\nu=2$}\label{section5}
\begin{thm}
Suppose that $v=it, t \in \R$, $k \geq 2$ and $N=p^2$ with $p$ prime. Then
 \begin{multline}
M_1(l,0,v)=\id_{(l,p)=1}\Biggl[\left(1-\frac{1}{p-p^{-1}}\right)\frac{1}{l^{1/2+v}}-\\-\frac{(2\pi)^{2v}i^{2k}}{p-p^{-1}} \frac{\Gamma(k-v)}{\Gamma(k+v)}\frac{1}{p^{1+2v}l^{1/2-v}}
+O\left( V_{p^2}(0,v,k)+\frac{1}{p}V_p(0,v,k)\right)\Biggr].
 \end{multline}
where $V_{N}(0,v,k)$ can be  estimated by \eqref{lemVestimate}.
\end{thm}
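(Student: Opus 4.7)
I would follow the argument of section \ref{section4} essentially verbatim, addressing the one point where it fails when $\nu=2$: the vanishing $S_2(l,u,v;N/p)=0$ in lemma \ref{lemS2=0} required $p^2\mid q$, but when $N/p=p$ the Kloosterman moduli are only divisible by $p$, so lemma \ref{Royer} no longer applies. Theorem \ref{PetRou} together with lemma \ref{lemS=S1-S2} gives
\begin{equation*}
 M_1(l,u,v)=\id_{(l,p)=1}\!\left[S_1(l,u,v;p^2)-\tfrac{1}{p-p^{-1}}S_1(l,u,v;p)+\tfrac{1}{(p-p^{-1})p^{1/2+u+v}}S_2(l,u,v;p)\right],
\end{equation*}
using that $S_2(l,u,v;p^2)=0$ still holds by Royer. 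Lemma \ref{lem:v} applied to the two $S_1$ terms then yields the diagonal contribution $(1-\tfrac{1}{p-p^{-1}})l^{-1/2-v}$ together with the $V_{p^2}$ and $\tfrac{1}{p}V_p$ error terms.

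All that is genuinely new is evaluating $S_2(l,u,v;p)$. Following the machinery of lemma \ref{lemV=} I would apply Petersson's formula and the Mellin--Barnes representation \eqref{Barnes} for $J_{2k-1}$, converting $S_2$ into a contour integral whose Dirichlet-series input is $\sum_n Kl(l,np;q)n^{-s'}$ summed against $q^{s-1}$ over $q\equiv 0\Mod{p}$. Writing $q=pq_1$ and using $e(np\bar x/(pq_1))=e(n\bar x/q_1)$, expansion of the Kloosterman sum turns the inner series into a finite combination of Lerch zeta functions $\zeta(0,b/q_1;s')$. Each such Lerch function has a pole at $s'=1$ only when $q_1\mid b$, which combined with $(b,pq_1)=1$ forces $q_1=1$. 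Hence only the single term $q=p$ contributes a pole; for this $q$ the Kloosterman sum collapses to the Ramanujan sum $Kl(l,np;p)=c_p(l)=-1$ (since $(l,p)=1$), and the inner Dirichlet series is simply $-\zeta(s')$.

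The explicit extra term in the theorem then arises from analytic continuation in $u$. For $\Re u$ large the pole $s_0=1-2u-2v$ of the $q=p$ contribution lies well to the left of the Mellin--Barnes contour; as $\Re u$ descends to $0$ it crosses to the right, and to preserve holomorphy of $S_2$ in $u$ one must pick up the corresponding $2\pi i\cdot\mathrm{Res}$. A direct residue calculation at $s_0=1-2v$, using $\mathrm{Res}_{s'=1}\zeta(s')=1$, the chain-rule factor $2$ from $s'=1/2+v+s/2$, and inserting $s=s_0$ into the Gamma and power factors, gives
\begin{equation*}
 S_2(l,0,v;p)=-i^{2k}\,\frac{\Gamma(k-v)}{\Gamma(k+v)}(2\pi)^{2v}\,\frac{1}{p^{1/2+v}l^{1/2-v}}+(\text{smooth contour integral}),
\end{equation*}
and multiplication by $\tfrac{1}{(p-p^{-1})p^{1/2+v}}$ reproduces the closed-form term in the statement. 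The smooth integral is bounded by the same techniques as lemma \ref{lem:estv} applied to the Kloosterman-variant Dirichlet series; the resulting contribution to $M_1$ is of size $\ll p^{-3/2}V_p(0,v,k)$, comfortably absorbed into the stated $O(V_{p^2}+p^{-1}V_p)$ error.

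The main obstacle is the residue bookkeeping: confirming that \emph{only} $q=p$ produces a pole (via the coprimality argument $(b,pq_1)=1\Rightarrow q_1=1$), tracking the sign and factor of $2\pi i$ from the contour deformation through the domain of validity of the initial representation, and matching all Gamma-factors and powers of $p$, $l$, and $2\pi$ against the exact coefficient in the statement. Estimating the smooth remainder is essentially routine given the $I_{\pm}$ bounds of section \ref{section3} and the summation technique of lemma \ref{lem:estv}.
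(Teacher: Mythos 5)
Your proposal is correct and follows essentially the same route as the paper: the decomposition via Theorem \ref{PetRou}, the observation that only $S_2(l,u,v;p)$ is new, the identification of $q=p$ as the unique polar modulus (via the Lerch zeta function and $(b,q)=1$), the evaluation $Kl(l,np;p)=-1$, and the residue computation all match, and your closed form for $S_2(l,0,v;p)$ agrees exactly with the paper's \eqref{eq:slp}. The only cosmetic difference is that you extract the residue by letting the pole cross a fixed contour as $\Re u\to 0$, whereas the paper shifts the $s$-contour past the pole at fixed large $\Re u$ and then continues analytically; these are equivalent, and your remainder bound, though slightly weaker than the paper's $O(p^{-1/2}V_p)$ for the smooth part of $\widetilde V_p$, is still absorbed by the stated error term.
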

Similarly to lemma \ref{lemM1=S-S} we prove the following decomposition.

\begin{lem}
Suppose that $\Re{u}>1/2$, $\Re{v}=0$, $k\geq 2$, $N=p^{2}$. Then
\begin{equation}
M_1(l,u,v)=\left(S(l,u,v;p^2)-\frac{1}{p-p^{-1}}S(l,u,v;p)\right)\id_{(l,p)=1},
\end{equation}
where $S(l,u,v;N)$ is given by \eqref{eq:s}.
\end{lem}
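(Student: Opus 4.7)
The plan is to follow the pattern of section \ref{section4}, the new feature for $\nu=2$ being that $S_2(l,u,v;p)$ no longer vanishes and produces the explicit secondary main term. First, by the last lemma above together with Lemma \ref{lemS=S1-S2},
\[
M_1(l,u,v)=\id_{(l,p)=1}\Bigl[S_1(l,u,v;p^2)-\tfrac{S_1(l,u,v;p)}{p-p^{-1}}+\tfrac{p^{-1/2-u-v}S_2(l,u,v;p)}{p-p^{-1}}-p^{-1/2-u-v}S_2(l,u,v;p^2)\Bigr],
\]
and Lemma \ref{lem:v} immediately evaluates both $S_1$-pieces, giving the expected $(1-(p-p^{-1})^{-1})l^{-1/2-u-v}$ together with $2\pi i^{2k}V_{p^2}(u,v,k)-2\pi i^{2k}(p-p^{-1})^{-1}V_p(u,v,k)$.

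Next I would show $S_2(l,u,v;p^2)=0$, exactly as in Lemma \ref{lemS2=0}: Petersson's formula produces $Kl(l,np;q)$ with $p^2\mid q$, which vanishes by Lemma \ref{Royer} applied via the symmetry $Kl(l,np;q)=Kl(np,l;q)$ (noting $p\mid np$, $p\nmid l$), while $\delta(l,np)=0$ because $(l,p)=1$. The essential new computation is $S_2(l,u,v;p)$. Here Petersson gives $Kl(l,np;q)$ with $p\mid q$, and Lemma \ref{Royer} restricts the surviving contributions to moduli $q=pq'$ with $(q',p)=1$. The Chinese Remainder Theorem factorisation then yields
\[
Kl(l,np;pq')=-Kl(l\overline{p}_{q'},n;q'),
\]
the $-1$ being the Ramanujan sum $c_p(l)$ (for $(l,p)=1$) and $\overline{p}_{q'}$ the inverse of $p$ modulo $q'$. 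Splitting into the $q'=1$ piece $T_1$ and the $q'\geq 2$ tail $T_2$ separates the main-term and error-term contributions.

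For $T_1$ the relevant sum is $-\tfrac{2\pi i^{-2k}}{p}\sum_n n^{-1/2-u-v}J_{2k-1}(4\pi\sqrt{ln/p})$; inserting the Mellin-Barnes representation \eqref{Barnes} of $J_{2k-1}$ and evaluating the Dirichlet series in $n$ turns this into a contour integral containing $\zeta(1/2+u+v+s/2)$. Analytic continuation of $u$ from $\Re u>1/2$ down to $0$ pulls the pole of $\zeta$ at $s=1-2u-2v$ across the contour; extracting the residue, multiplying by the prefactor $p^{-1/2-v}(p-p^{-1})^{-1}$ from the first step, and using $i^{-2k}=i^{2k}$, one recovers exactly the advertised secondary main term $-\frac{(2\pi)^{2v}i^{2k}}{p-p^{-1}}\frac{\Gamma(k-v)}{\Gamma(k+v)}\frac{1}{p^{1+2v}l^{1/2-v}}$. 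For $T_2$ I would rerun, verbatim, the Mellin-Barnes/Lerch-functional-equation/$I_{\pm}$-integral machinery of section \ref{section4}; the extra constraint $(q',p)=1$ and the unit twist $\overline{p}_{q'}$ inside the Kloosterman sum are harmless, and after the overall prefactor $p^{-1}(p-p^{-1})^{-1}$ the tail is bounded by $O(p^{-1}V_p(0,v,k))$, which also absorbs the shifted integral left over in $T_1$ after residue extraction. The main obstacle will be the bookkeeping in the $T_1$ residue calculation: tracking the motion of the $\zeta$-pole across the contour as $u$ varies, and verifying that all constants, powers of $p$ and $l$, and the factor $i^{\pm 2k}$ recombine into precisely the secondary main term demanded by the statement.
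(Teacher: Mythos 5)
You have proved the wrong statement. The lemma to be established is only the two-line decomposition
\begin{equation*}
M_1(l,u,v)=\Bigl(S(l,u,v;p^2)-\tfrac{1}{p-p^{-1}}S(l,u,v;p)\Bigr)\id_{(l,p)=1},
\end{equation*}
and your very first display obtains this identity ``by the last lemma above'', i.e.\ you take it as an input and then combine it with Lemma \ref{lemS=S1-S2}. Everything that follows --- the vanishing of $S_2(l,u,v;p^2)$, the evaluation of $S_2(l,u,v;p)$, the residue of the zeta pole, the secondary main term --- is a sketch of the proof of the \emph{theorem} that opens Section \ref{section5}, not of this lemma. As a proof of the statement in question the argument is therefore circular: the one identity you are asked to justify is never derived.

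The actual proof is short and is the exact analogue of Lemma \ref{lemM1=S-S}: for $\Re u>1/2$ the series $L_f(1/2+u+v)=\sum_n \lambda_f(n)n^{-1/2-u-v}$ converges absolutely, so one may interchange summation to get $M_1(l,u,v)=\sum_{n\ge1}n^{-1/2-u-v}\Delta^{*}_{2k,p^2}(l,n)$; then the $\nu=2$ case of Theorem \ref{PetRou} gives $\Delta^{*}_{2k,p^2}(l,n)=\Delta_{2k,p^2}(l,n)-\tfrac{1}{p-p^{-1}}\Delta_{2k,p}(l,n)$ when $(ln,p)=1$ and $0$ otherwise, which produces both the indicator $\id_{(l,p)=1}$ and the coprimality condition $(n,p)=1$ built into the definition \eqref{eq:s} of $S$. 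Note in particular that the coefficient $1/(p-p^{-1})$ (rather than the $1/p$ of the $\nu\ge3$ case) comes precisely from \eqref{eq:rtrace}; this is the only point at which $\nu=2$ differs, and it is the point your write-up skips. For what it is worth, your outline of the subsequent theorem is broadly sound and close to the paper's route (the paper reaches the secondary main term via the pole of $\zeta(0,bp/q;s+v)$ at $q=p$ and the evaluation $Kl(l,np;p)=-1$, rather than via a CRT factorisation over all $q=pq'$), but that material belongs to the theorem, not to this lemma.
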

The sum $ S(l,u,v;p^2)$ can be evaluated with the methods of the previous section. This gives
\begin{equation}
 S(l,0,v;p^2)=S_1(l,0,v;p^2)=\frac{1}{l^{1/2+v}}+2\pi i^{2k}V_{p^2}(0,v,k),
\end{equation}
where $S_1(l,u,v;p^2)$ is defined by \eqref{eq:s1}.
The sum  $ S(l,u,v;p)$ splits into two parts
\begin{equation}\label{sumsp}
 S(l,u,v;p)=S_1(l,u,v;p)-\frac{1}{p^{1/2+u+v}}S_2(l,u,v;p).
\end{equation}
Here
\begin{equation}\label{sumsp1}
 S_1(l,0,v;p)=\frac{1}{l^{1/2+v}}+2\pi i^{2k}V_p(0,v,k)
\end{equation}
and $S_2(l,u,v;p)$ is given by \eqref{eq:s2}.
Note that if $N=p$ we cannot use the property of vanishing of Kloosterman sums
\eqref{Royer} to evaluate $S_2(l,u,v;p)$.

\begin{lem}
 For $(l,p)=1$ we have
\begin{equation}\label{sumsp2}
 S_2(l,u,v;p)=2\pi i^{2k}\widetilde{V}_{p}(u,v,k),
\end{equation}
where
\begin{equation}
 \widetilde{V}_{p}(u,v,k)=\sum_{n=1}^{\infty}\sum_{q\equiv 0 \Mod{p}}
\frac{Kl(l,np;q)}{n^{1/2+u+v}q}J_{2k-1}\left( 4\pi \frac{\sqrt{lnp}}{q}\right).
\end{equation}

\end{lem}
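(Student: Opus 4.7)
The plan is to apply the Petersson trace formula directly to $\Delta_{2k,p}(l,np)$ and then exploit the coprimality hypothesis $(l,p)=1$ to kill the diagonal contribution.

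First, I would substitute the Petersson formula (Theorem \ref{Petersson}) with level $p$ into the definition of $S_2(l,u,v;p) = \sum_{n \geq 1} n^{-1/2-u-v}\Delta_{2k,p}(l,np)$, writing
\begin{equation*}
\Delta_{2k,p}(l,np) = \delta(l,np) + 2\pi i^{-2k}\sum_{q \equiv 0 \Mod{p}}\frac{Kl(l,np;q)}{q}J_{2k-1}\!\left(\frac{4\pi\sqrt{lnp}}{q}\right).
\end{equation*}
Since $(l,p)=1$ but $p\mid np$ for every $n\geq 1$, the Kronecker delta $\delta(l,np)$ vanishes identically, so only the Kloosterman term survives. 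Using $i^{-2k}=i^{2k}$, the lemma will follow from interchanging the order of the $n$- and $q$-summations, giving precisely $2\pi i^{2k}\widetilde{V}_p(u,v,k)$.

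The main technical issue is justifying the swap of sums. This is analogous to the absolute convergence argument carried out in Lemma \ref{lem:v}: I would bound the Weil estimate $Kl(l,np;q)\ll q^{1/2+\epsilon}(l,np,q)^{1/2}$ and use the standard estimate
\begin{equation*}
J_{2k-1}(x)\ll \min\!\left(x^{2k-1},\,x^{-1/2}\right)
\end{equation*}
to split the double sum at $q\asymp \sqrt{lnp}$. In the range $q\leq \sqrt{lnp}$ the Bessel factor gives $(lnp)^{-1/4}q^{1/2}$, and for $q\geq \sqrt{lnp}$ it gives $(lnp)^{k-1/2}q^{1/2-2k}$, and both resulting sums converge absolutely for $\Re u$ sufficiently large (say $\Re u>3/4$) and $k\geq 2$, exactly as in the proof of Lemma \ref{lem:v}.

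Once this Fubini step is legitimate in some right half-plane of $u$, the identity $S_2(l,u,v;p)=2\pi i^{2k}\widetilde{V}_p(u,v,k)$ is established there, and since both sides will later be continued analytically to $u=0$ in the same way as in Section \ref{section4}, the identification is valid in the region of interest. The only delicate point — and what distinguishes this case from Lemma \ref{lemS2=0} — is precisely that we cannot invoke the vanishing of Kloosterman sums from Lemma \ref{Royer}: here the modulus $q$ is only divisible by $p$, not by $p^2$, so the Kloosterman sum $Kl(l,np;q)$ genuinely contributes and must be kept as an explicit error term $\widetilde{V}_p$.
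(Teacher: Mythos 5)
Your proof is correct and matches the paper's (implicit) argument: the lemma is just the Petersson trace formula applied to $\Delta_{2k,p}(l,np)$, with the diagonal term killed because $(l,p)=1$ while $p\mid np$, i.e.\ exactly the argument of Lemma \ref{lemS2=0} minus the Royer vanishing step. The only cosmetic point is that $\widetilde{V}_p(u,v,k)$ is defined with the $n$-sum outermost, so no interchange of summation is actually needed for this identity itself; the swap, together with the absolute-convergence justification you correctly supply, is the content of the next lemma in the paper.
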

In the region of absolute convergence we can change the order of summations.
\begin{lem}
 For $\Re{u}>3/4$, $\Re{\lambda}>3/4$ we have
\begin{equation}
 \widetilde{V}_{p}(u,v,\lambda)=\sum_{q\equiv 0\Mod{p}}\frac{1}{q}\widetilde{V}_p(u,v,\lambda;q),
\end{equation}
\begin{equation}
 \widetilde{V}_p(u,v,\lambda;q)=\sum_{n=1}^{\infty}\frac{1}{n^{1/2+u+v}}
Kl(l,np;q)J_{2\lambda-1}\left( 4\pi \frac{\sqrt{lnp}}{q}\right).
\end{equation}
\end{lem}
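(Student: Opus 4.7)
The plan is to mirror the argument used in Lemma \ref{lem:v}, where the analogous decomposition for $V_N(u,v,\lambda)$ was established by proving absolute convergence of the associated double sum and then applying Fubini. The only substantive change here is the replacement of $n$ by $np$ inside the Kloosterman sum and the Bessel argument; since $p$ is a fixed prime, this introduces only a harmless multiplicative constant that does not shift any convergence threshold in $n$ or $q$.

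Concretely, I would start with the trivial bound $|Kl(l,np;q)| \leq q$ (the Weil bound $\ll \tau(q) q^{1/2}$ also works but is not needed for mere absolute convergence) together with the standard estimate
\begin{equation*}
|J_{2\lambda-1}(y)| \ll \min\bigl( y^{2\Re\lambda - 1},\; y^{-1/2}\bigr), \quad y>0,
\end{equation*}
valid for $\Re \lambda > 1/2$. Plugging these into the definition of $\widetilde{V}_p(u,v,\lambda)$ and splitting the $q$-summation at the threshold $q = \sqrt{lnp}$, one obtains the two-regime estimate
\begin{equation*}
\sum_{n=1}^\infty \frac{1}{n^{1/2+\Re u}}\sum_{\substack{q \equiv 0\Mod{p}}}\frac{|Kl(l,np;q)|}{q}\,\bigl|J_{2\lambda-1}(4\pi \sqrt{lnp}/q)\bigr|
\ll_{l,p} \sum_{n} \frac{1}{n^{1/4+\Re u}} + \sum_{n} \frac{n^{\Re\lambda - 1/2}}{n^{\Re u + \Re \lambda - 1/4}},
\end{equation*}
exactly as in the proof of Lemma \ref{lem:v}. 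The first series converges for $\Re u > 3/4$, and the second reduces to $\sum_n n^{-1/4 - \Re u}$, which converges under the same condition; the hypothesis $\Re \lambda > 3/4$ is only needed to ensure that the sum over $q < \sqrt{lnp}$ is effectively bounded using $J_{2\lambda-1}(y) \ll y^{2\Re \lambda -1}$.

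Once absolute convergence of the double series has been verified, Fubini's theorem permits interchanging the order of summation, yielding
\begin{equation*}
\widetilde{V}_p(u,v,\lambda)=\sum_{q\equiv 0\Mod{p}}\frac{1}{q}\sum_{n=1}^\infty \frac{Kl(l,np;q)}{n^{1/2+u+v}}\,J_{2\lambda-1}\!\left(\frac{4\pi\sqrt{lnp}}{q}\right) = \sum_{q\equiv 0\Mod{p}}\frac{1}{q}\widetilde{V}_p(u,v,\lambda;q),
\end{equation*}
as claimed. There is no genuine obstacle here; the statement is a bookkeeping lemma of Fubini type, and the only point requiring a sanity check is that the extra factor of $\sqrt p$ coming from $\sqrt{lnp}$ inside the Bessel function does not disturb the $n$-convergence — which it does not, since $p$ is fixed and independent of the summation variables.
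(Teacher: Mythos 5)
Your overall strategy is the right one and is exactly what the paper (tacitly) does: this lemma carries no separate proof in the text and is justified by the same absolute-convergence-plus-Fubini argument as Lemma \ref{lem:v}, with $n$ replaced by $np$, which costs only a fixed constant. However, there is one concrete misstep that would cause the argument to fail in the stated range. The trivial bound $|Kl(l,np;q)|\le q$ is \emph{not} sufficient, contrary to your parenthetical remark. With the trivial bound the summand in the regime $q<\sqrt{lnp}$ is $\ll (q/\sqrt{lnp})^{1/2}$, so the $q$-sum there is $\ll \sqrt{lnp}$ and the resulting $n$-series is $\sum_n n^{-\Re u}$, which requires $\Re u>1$; and in the regime $q>\sqrt{lnp}$ the $q$-sum $\sum_q q^{1-2\Re\lambda}$ requires $\Re\lambda>1$. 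The thresholds $\Re u>3/4$ and $\Re\lambda>3/4$ in the statement are calibrated precisely to the Weil bound $|Kl(l,np;q)|\ll_\epsilon q^{1/2+\epsilon}$ (this is the origin of the factor $q^{-1/2}$ in the display in the proof of Lemma \ref{lem:v}): it is that extra $q^{-1/2}$ which turns the small-$q$ contribution into $(lnp)^{1/4}n^{-1/2-\Re u}\ll n^{-1/4-\Re u}$ and makes the large-$q$ tail $\sum_q q^{1/2-2\Re\lambda}$ converge for $\Re\lambda>3/4$. Your displayed two-regime estimate is in fact the Weil-bound estimate, so the numbers you wrote are correct, but they do not follow from the input you claim to use; you must invoke Weil (or at least $|Kl|\ll q^{1/2+\epsilon}$).

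A secondary, smaller point: you attribute the hypothesis $\Re\lambda>3/4$ to the range $q<\sqrt{lnp}$, but the small-argument bound $J_{2\lambda-1}(y)\ll y^{2\Re\lambda-1}$ is used for \emph{large} $q$ (where $y=4\pi\sqrt{lnp}/q$ is small), and that is where $\Re\lambda>3/4$ is needed for the convergence of the $q$-sum; the range $q<\sqrt{lnp}$ uses $J_{2\lambda-1}(y)\ll y^{-1/2}$ and produces the condition $\Re u>3/4$. Once these two corrections are made, the rest of your argument (Fubini after absolute convergence, restriction of $q$ to the progression $q\equiv 0\Mod{p}$ only helping) is sound and coincides with the paper's route.
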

For $\Re{s}>1-\Re{v}$ let
\begin{equation}
 \widetilde{g}(s,v;q)=\sum_{n=1}^{\infty}\frac{Kl(l,np;q)}{n^{s+v}}.
\end{equation}
Using the definition of Kloosterman sums
\begin{equation}\label{eq:gtilde}
 \widetilde{g}(s,v;q)=\sum_{a,b=1}^{q}\delta_q(ab-1)e\left( \frac{al}{q}\right)
\zeta(0,bp/q;s+v).
\end{equation}
Since $(b,qp)=1$ the function $\zeta(0,bp/q,s+v)$ has a pole only if $q=1$ or $q=p$. Because $(l,p)=1$ we have
\begin{equation*}
 Kl(l,np;p)=\sum_{a,b=1}^{p}\delta_p(ab-1)e\left( \frac{al}{p}\right)=
\sum_{a=1}^{p-1}e\left(\frac{al}{p} \right)=-1.
\end{equation*}
This gives
\begin{equation}\label{eq:gtilde p}
 \widetilde{g}(s,v;p)=-\zeta(s+v).
\end{equation}



Applying the Mellin-Barnes representation for the Bessel function we obtain
\begin{lem}
For $\Re{u}>3/4$, $\Re{\lambda}>3/4$
\begin{multline}
 \widetilde{V}_p(u,v,\lambda;q)=\frac{1}{4\pi i}\int_{(\Delta)}
\frac{\Gamma(\lambda-1/2+s/2)}{\Gamma(\lambda+1/2-s/2)}\times \\ \times
\widetilde{g}(1/2+u+s/2,v;q)\left(\frac{q}{2\pi \sqrt{lp}} \right)^sds,
\end{multline}
where $ \max{(1-2\Re{\lambda},1-2\Re{u})}<\Delta<0.$
\end{lem}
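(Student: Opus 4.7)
The plan is a direct application of the Mellin-Barnes representation \eqref{Barnes} for the Bessel function inside the defining series of $\widetilde{V}_p(u,v,\lambda;q)$. Setting $y = 4\pi\sqrt{lnp}/q$ in \eqref{Barnes} yields
$$J_{2\lambda-1}\left(\frac{4\pi\sqrt{lnp}}{q}\right) = \frac{1}{4\pi i}\int_{(\Delta)} \frac{\Gamma(\lambda-1/2+s/2)}{\Gamma(\lambda+1/2-s/2)} \left(\frac{2\pi\sqrt{lnp}}{q}\right)^{-s} ds$$
for $1-2\Re{\lambda}<\Delta<0$. The factor $(2\pi\sqrt{lnp}/q)^{-s} = (q/(2\pi\sqrt{lp}))^{s}\, n^{-s/2}$ separates cleanly into a function of $s$ times a pure power of $n$. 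I would substitute this expression into the series for $\widetilde{V}_p(u,v,\lambda;q)$ and interchange the order of summation and integration, so that the inner $n$-sum becomes $\sum_n Kl(l,np;q)\, n^{-(1/2+u+v+s/2)} = \widetilde{g}(1/2+u+s/2, v; q)$, producing exactly the stated identity.

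To legitimize the interchange via Fubini's theorem, I would verify absolute convergence of the resulting double sum-integral. Stirling's formula gives the bound $\Gamma(\lambda-1/2+s/2)/\Gamma(\lambda+1/2-s/2) \ll (1+|\Im s|)^{\Re s - 1}$, which on the line $\Re s = \Delta < 0$ decays polynomially and guarantees uniform $s$-integrability. Using the trivial estimate $|Kl(l,np;q)| \le q$, the $n$-series converges absolutely provided $\Re(1/2+u+v+s/2) > 1$ on $\Re s = \Delta$, i.e.\ $\Delta > 1 - 2\Re u - 2\Re v$; taking $\Re v = 0$ this reduces to $\Delta > 1 - 2\Re u$. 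Combined with the Mellin-Barnes condition $\Delta > 1 - 2\Re\lambda$ and $\Delta < 0$, this yields precisely the admissible strip $\max(1-2\Re\lambda, 1-2\Re u) < \Delta < 0$ stated in the lemma, and the hypotheses $\Re u, \Re\lambda > 3/4$ ensure it is nonempty.

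The argument is essentially identical to the proof of the companion representation \eqref{eq:vn} in Lemma \ref{lemV=}: the only modifications are the replacement of $Kl(l,n;q)$ by $Kl(l,np;q)$ and the appearance of $\sqrt{lp}$ in place of $\sqrt{l}$ in the $s$-power, both of which are absorbed into the definition of $\widetilde{g}$ rather than $g$. No new obstacle arises; the only bookkeeping point is to track the shift parameters $u,v,\lambda$ so that the admissible strip for $\Delta$ is nonempty, which is immediate from the hypotheses.
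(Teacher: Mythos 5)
Your proof is correct and is exactly the argument the paper intends (the paper states this lemma, like its companion \eqref{eq:vn}, with only the phrase ``applying the Mellin--Barnes representation''): substitute \eqref{Barnes} with $y=4\pi\sqrt{lnp}/q$, separate $n^{-s/2}$ from $(q/(2\pi\sqrt{lp}))^{s}$, and interchange sum and integral via the Stirling decay of the Gamma ratio together with the trivial bound on the Kloosterman sums, which forces precisely the stated strip for $\Delta$. No gaps.
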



\begin{lem}
For $q \in \N$,  $p|q$ and $ v \in \C$ the function $ \widetilde{g}(s,v;q)$ can be meromorphically continued on the whole complex plane as a function of complex variable $s$. Furthermore, for $\Re{(s+v)}<0$  one has
\begin{multline}
 \widetilde{g}(s,v;q)=\Gamma(1-s-v)\left( \frac{2\pi p}{q}\right)^{s+v-1}\id_{(q/p,p)=1}
\times \\ \times \Biggl( e\left(\frac{1-s-v}{4}\right)\sum_{n=1}^{\infty}\frac{f_+(n)}{n^{1-s-v}}+
e\left(-\frac{1-s-v}{4}\right)\sum_{n=1}^{\infty}\frac{f_-(n)}{n^{1-s-v}}  \Biggr),
\end{multline}
where the functions
\begin{equation}\label{eq:fpm}
 f_{\pm}(n)=\id_{(n,q/p)=1}e\left( \pm\frac{ln^{*}}{q}\right)
\sum_{\substack{s=1\\(\pm n^{*}+sq/p,p)=1}}^{p}e\left(\frac{ls}{p} \right)
\end{equation}
with $nn^*\equiv 1 \Mod{q/p}$ satisfy
\begin{equation}\label{fineq}
 |f_{\pm}(n)|\leq 1.
\end{equation}

\end{lem}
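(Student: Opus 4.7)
The plan is to adapt the proof of Lemma~\ref{lem:g} while keeping track of the extra prime factor $p$ in the Kloosterman argument. Writing $q_0 := q/p$, I would start from \eqref{eq:gtilde}: opening the Kloosterman sum and summing over $n$ first gives
\[
\widetilde{g}(s,v;q) = \sum_{a,b=1}^{q} \delta_q(ab-1)\, e\!\left(\frac{al}{q}\right) \zeta\!\left(0,\tfrac{b}{q_0};s+v\right),
\]
using $e(npb/q)=e(nb/q_0)$ and periodicity of the Lerch zeta in its second argument. Since $(b,q)=1$ forces $(b,q_0)=1$, in particular $b\not\equiv 0\Mod{q_0}$, each Lerch zeta appearing is entire in $s$; this already supplies the meromorphic (indeed holomorphic) continuation.

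For the explicit formula in the range $\Re(s+v)<0$ I would invoke the functional equation \eqref{Lerch.func.equat} to convert $\zeta(0,b/q_0;s+v)$ into a combination of $\zeta(\pm b/q_0,0;1-s-v)$, and then expand
\[
\zeta\!\left(\pm \tfrac{b}{q_0},0;1-s-v\right) = q_0^{1-s-v}\sum_{\substack{n>0\\ n\equiv \pm b \Mod{q_0}}} \frac{1}{n^{1-s-v}}.
\]
Swapping summations—legitimate for $\Re(s+v)$ sufficiently negative—produces the announced prefactor $\Gamma(1-s-v)(2\pi p/q)^{s+v-1}$ together with Dirichlet series whose coefficients are
\[
F_\pm(n) = \id_{(n,q_0)=1}\sum_{\substack{b \Mod q\\ (b,q)=1\\ b\equiv \pm n \Mod{q_0}}} e\!\left(\frac{lb^{-1}}{q}\right).
\]
What remains is to identify $F_\pm(n)$ with $f_\pm(n)$ and to justify the outer indicator $\id_{(q_0,p)=1}$.

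The core arithmetic is a change of variable in the sum over $b$. For $(n,q_0)=1$ I would parametrize $b = n_0 + sq_0$ with $n_0\equiv n\Mod{q_0}$ and $s\in\{0,\ldots,p-1\}$, the constraint $(b,q)=1$ reducing to $(n_0+sq_0,p)=1$. Using the ansatz $b^{-1}\equiv n^*+tq_0 \Mod q$, with $nn^*\equiv 1\Mod{q_0}$, and writing $n_0 n^*=1+Kq_0$, solving $bb^{-1}\equiv 1\Mod q$ modulo $p$ yields the M\"obius relation
\[
t(n_0+sq_0)\equiv -(K+sn^*) \Mod p,
\]
which bijects $\{s:(n_0+sq_0,p)=1\}$ with $\{t:(n^*+tq_0,p)=1\}$. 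Since $e(lb^{-1}/q)=e(ln^*/q)\,e(lt/p)$, passing from $s$ to $t$ as summation variable yields precisely $e(ln^*/q)\sum_{t,\,(n^*+tq_0,p)=1}e(lt/p) = f_+(n)$; the minus case is strictly parallel using $(-n)^*\equiv -n^*\Mod{q_0}$.

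The main obstacle is the split between the two arithmetic situations. When $(q_0,p)=1$, the M\"obius map bijects two sets of size $p-1$, and the restricted character sum collapses via $\sum_{s=1}^{p}e(ls/p)=0$ (valid because $(l,p)=1$) to a single omitted term of modulus one; this delivers both the identification $F_\pm(n)=f_\pm(n)$ and the bound $|f_\pm(n)|\le 1$. When instead $p\mid q_0$, the condition $(n^*+sq_0,p)=(n^*,p)=1$ is automatic, so the restricted sum becomes the full character sum $\sum_{s=1}^p e(ls/p)$ and vanishes; this is exactly the content of the external factor $\id_{(q/p,p)=1}$ and is also consistent with the vanishing $Kl(l,np;q)=0$ predicted by Lemma~\ref{Royer} when $p^2\mid q$ and $(l,p)=1$.
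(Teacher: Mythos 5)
Your proposal is correct and follows essentially the same route as the paper: open the Kloosterman sum as in \eqref{eq:gtilde}, apply the Lerch functional equation, interchange summations, and evaluate the complete residue sum by the same parametrization $a=n^{*}+sq/p$ (your bijection $b\mapsto b^{-1}$ is just a rephrasing of the paper's direct solution of the congruence system), with the identical case split on whether $p\mid q/p$ yielding both the indicator $\id_{(q/p,p)=1}$ and the bound $|f_{\pm}(n)|\leq 1$. The only slip is the parenthetical claim that the continuation is holomorphic: in the degenerate case $q=p$ (so $q/p=1$) every $b$ satisfies $b\equiv 0\Mod{q/p}$ and the Lerch zeta has a pole at $s+v=1$, consistent with $\widetilde{g}(s,v;p)=-\zeta(s+v)$; this is why the lemma asserts only meromorphy and why the paper isolates $q=p$ before invoking the lemma.
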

\begin{proof}
Applying the functional equation for the Lerch zeta function  we have
\begin{multline*}
  \widetilde{g}(s,v;q)
=\frac{\Gamma(1-s-v)}{(2\pi)^{1-s-v}}e\left( \frac{1-s-v}{4}\right)\times \\ \times \sum_{a,b=1}^{q}\delta_q(ab-1)e\left(\frac{al}{q} \right)\zeta(bp/q,0;1-s-v)+
\frac{\Gamma(1-s-v)}{(2\pi)^{1-s-v}}\times \\ \times e\left(- \frac{1-s-v}{4}\right) \sum_{a,b=1}^{q}\delta_q(ab-1)e\left(\frac{al}{q} \right)\zeta(-bp/q,0;1-s-v).
\end{multline*}
Note that
\begin{multline*}
 \sum_{a,b=1}^{q}\delta_q(ab-1)e\left(\frac{al}{q} \right)\zeta(bp/q,0;1-s-v)=\\=
\left( \frac{q}{p}\right)^{1-s-v}\sum_{n=1}^{\infty}\frac{1}{n^{1-s-v}}
\sum_{\substack{a,b=1\\b \equiv n \Mod{q/p}}}^{q}\delta_q(ab-1)e\left( \frac{al}{q}\right).
\end{multline*}
Consider the sum over $a,b$. Since $b \equiv n\Mod{q/p}$, there is $t \Mod{p}$ such that
$b=n+tq/p$. Then $ab\equiv 1\Mod{q}$ implies $an+atq/p\equiv 1\Mod{q}$. Further,
\begin{equation*}
 \begin{cases}
  an \equiv 1 \Mod{q/p}\\
  at+p(an-1)/q\equiv 0\Mod{p}\\
  (a,p)=1
 \end{cases}
\Leftrightarrow
\begin{cases}
 an\equiv 1\Mod{q/p}\\
(a,p)=1
\end{cases}.
\end{equation*}
Therefore,
\begin{equation*}
S:= \sum_{\substack{a,b=1\\b \equiv n \Mod{q/p}}}^{q}\delta_q(ab-1)e\left( \frac{al}{q}\right)=
\sum_{\substack{a=1\\(a,p)=1}}^{q}\delta_{q/p}(an-1)e\left(\frac{al}{q} \right).
\end{equation*}
Condition $an\equiv 1\Mod{q/p}$ implies that
\begin{equation*}
 \begin{cases}
  (n,q/p)=1\\
  a\equiv n^{*}\Mod{q/p}
 \end{cases}
\Leftrightarrow
\begin{cases}
 (n,q/p)=1\\
  a= n^{*}+sq/p\text{, }s\Mod{p}
\end{cases}.
\end{equation*}
Consequently,
\begin{equation*}
 S=
\id_{(n,q/p)=1}
e\left( \frac{ln^{*}}{q}\right)
\sum_{\substack{s=1\\(n^{*}+sq/p,p)=1}}^{p}e\left(\frac{ls}{p} \right).
\end{equation*}
The inequality \eqref{fineq} can be proved as follows.
If $(q/p,p)=1$ then requirement $(n^{*}+sq/p,p)=1$ is not satisfied only for one $s$. Since
$(l,p)=1$ we conclude that
\begin{equation*}
 \sum_{s=1}^{p}e\left( \frac{ls}{p}\right)=0,
\end{equation*}
and, therefore,
\begin{equation*}
 \left|\sum_{\substack{s=1\\(n^{*}+sq/p,p)=1}}^{p}e \left( \frac{ls}{p}\right)\right|\leq 1.
\end{equation*}
If $q/p\equiv 0\Mod{p}$ then condition $(n^{*}+sq/p,p)=1$ is equivalent to $(n^{*},p)=1$.
In this case
\begin{equation*}
\sum_{\substack{s=1\\(n^{*}+sq/p,p)=1}}^{p}e \left( \frac{ls}{p}\right)=0.
\end{equation*}

\end{proof}

The following statement can be proved in the same manner as lemma \ref{lemV=}
\begin{lem}
For
$\Re \lambda-1>\Re u>3/4$  and  $\Re v=0$ one has
\begin{multline}\label{eq:vtilde2}
 \widetilde{V}_p(u,v,\lambda)=-\frac{1}{p}\frac{\Gamma(\lambda-u-v)}{\Gamma(\lambda+u+v)}
\left( \frac{\sqrt{p}}{2\pi \sqrt{l}}\right)^{1-2u-2v}\\
+
\sum_{\substack{q \equiv 0 \Mod{p}\\ (q/p,p)=1}}\frac{(2\pi p)^{u+v-1/2}}{q^{1/2+u+v}} \\
\times \Biggl(\sum_{n=1}^{\infty}\frac{f_+(n)}{n^{1/2-u-v}}
e\left(\frac{1/2-u-v}{4}\right) I_{-}\left(u,v,\lambda,\frac{qn}{2\pi l} \right)\\+
\sum_{n=1}^{\infty}\frac{f_-(n)}{n^{1/2-u-v}}
e\left(-\frac{1/2-u-v}{4}\right)I_{+}\left(u,v,\lambda,\frac{qn}{2\pi l} \right)
\Biggr),
\end{multline}
where $I_{\pm}(u,v,\lambda,x)$ and $f_{\pm}(n)$ are defined by \eqref{Idef} and \eqref{eq:fpm}, respectively.
\end{lem}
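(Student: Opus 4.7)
My plan is to adapt the derivation of Lemma~\ref{lemV=} to the present twisted setting, the key new feature being the residue contribution from $q=p$, where $\widetilde{g}(\cdot,v;p)=-\zeta(\cdot+v)$ has a simple pole. Starting from the Mellin--Barnes representation for $\widetilde{V}_p(u,v,\lambda;q)$ proved just above and summing over $q\equiv 0\Mod{p}$ with weight $1/q$, I arrive at
\begin{equation*}
\widetilde{V}_p(u,v,\lambda)=\frac{1}{4\pi i}\int_{(\Delta_1)}\frac{\Gamma(\lambda-1/2+s/2)}{\Gamma(\lambda+1/2-s/2)}\sum_{q\equiv 0\Mod{p}}\frac{\widetilde{g}(1/2+u+s/2,v;q)}{q}\left(\frac{q}{2\pi\sqrt{lp}}\right)^{s}\,ds,
\end{equation*}
with $1-2\Re u<\Delta_1<0$ so that both the series defining $\widetilde{g}$ and the $s$-integral converge absolutely. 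I then shift the contour leftward to $\Re s=\Delta_2$ with $1-2\Re\lambda<\Delta_2<-1-2\Re u$, the range in which the functional equation for $\widetilde{g}$ from the preceding lemma applies.

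Inspection of the Lerch decomposition \eqref{eq:gtilde} shows that, for any $q=pm$ with $m>1$, the conditions $ab\equiv 1\Mod{q}$ and $bp/q\in\Z$ force $(b,q)\neq 1$, so $\widetilde{g}(\cdot,v;q)$ is entire. Consequently the only pole crossed by the shift is that of $-\zeta(1/2+u+s/2+v)$ at $s_{0}=1-2u-2v$, arising solely from the $q=p$ term, with residue equal to $-2$. Evaluating the remaining integrand at $s_{0}$ yields $\Gamma(\lambda-u-v)/\Gamma(\lambda+u+v)\cdot(\sqrt{p}/(2\pi\sqrt{l}))^{1-2u-2v}$, and multiplying by $1/p$ (from $1/q$ at $q=p$) and by $\frac{1}{4\pi i}\cdot 2\pi i=\frac{1}{2}$ gives exactly the first term of the claimed formula.

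For the remaining integral on $\Re s=\Delta_2$, I substitute the explicit functional-equation formula for $\widetilde{g}$, whose factor $\id_{(q/p,p)=1}$ automatically restricts the $q$-sum to $q=pm$ with $(m,p)=1$. Combining the three factors $(2\pi p/q)^{u+s/2+v-1/2}$ from $\widetilde{g}$, $(q/(2\pi\sqrt{lp}))^{s}$ from the Mellin--Barnes setup, and $1/q$ from the sum, I extract the $s$-independent coefficient $(2\pi p)^{u+v-1/2}/q^{1/2+u+v}$, leaving $(q/(2\pi l))^{s/2}$ inside the integral. Interchanging with the $n$-sum and writing $n^{-(1/2-u-v-s/2)}=n^{u+v-1/2}\cdot n^{s/2}$, the interior factor becomes $(qn/(2\pi l))^{s/2}$, while $e(\pm(1/2-u-v-s/2)/4)=e(\pm(1/2-u-v)/4)\cdot e(\mp s/8)$ matches $I_{\mp}$ in the definition~\eqref{Idef}, so $f_+$ is paired with $I_-$ and $f_-$ with $I_+$. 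Exchanging the $n$- and $q$-sums with the $s$-integral is justified by the $e^{-\pi|\Im s|/4}$ decay of the Gamma ratio together with the uniform bound $|f_\pm|\leq 1$, and reproduces the second term of the stated formula.

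The main obstacle is the arithmetic bookkeeping of the prefactors: one must carefully track all powers of $q$, $p$, $l$, $n$, and $2\pi$ so that $(2\pi p)^{u+v-1/2}q^{-1/2-u-v}$ collects outside the integral while $(qn/(2\pi l))^{s/2}$ assembles inside, and verify that the sign pairing between $f_\pm$ and $I_\mp$ comes out as advertised. Everything else is parallel to the argument of Lemma~\ref{lemV=}.
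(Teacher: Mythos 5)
Your proposal is correct and follows exactly the route the paper intends (it states only that the lemma ``can be proved in the same manner as lemma \ref{lemV=}''): you shift the contour into the region where the functional equation for $\widetilde{g}$ applies, and the single new ingredient --- the simple pole of $\widetilde{g}(\cdot,v;p)=-\zeta(\cdot+v)$ at the $q=p$ term, crossed during the shift with residue $-2$ in the variable $s$ --- produces precisely the first term of \eqref{eq:vtilde2}. Your bookkeeping of the prefactors, the pairing of $f_{\pm}$ with $I_{\mp}$, and the observation that no other $q$ contributes a pole (since $(b,q)=1$ rules out $bp/q\in\Z$ for $q>p$) are all accurate.
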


\begin{cor}
Let $N=p^2$. Then
\begin{equation}\label{eqv3}
 \widetilde{V}_p(0,v,k)=-\frac{1}{p}\frac{\Gamma(k-v)}{\Gamma(k+v)}
\left(\frac{\sqrt{p}}{2\pi\sqrt{l}}\right)^{1-2v}+O\left( \frac{1}{\sqrt{p}}V_p(0,v,k)\right),
\end{equation}

\begin{equation}\label{eq:slp}
 S(l,0,v;p)=\frac{1}{l^{1/2+v}}+\frac{2\pi i^{2k}}{(2\pi)^{1-2v}}\frac{\Gamma(k-v)}{\Gamma(k+v)}
\frac{l^{v-1/2}}{p^{1+2v}}+O(V_p(0,v,k)).
\end{equation}

\end{cor}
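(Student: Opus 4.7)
The plan is to specialize the previous lemma (equation \eqref{eq:vtilde2}) to $u=0$, $\lambda=k$ by analytic continuation, and then substitute the resulting formula for $\widetilde{V}_p(0,v,k)$ into the decomposition \eqref{sumsp} to recover \eqref{eq:slp}.

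\emph{Step 1: Proof of \eqref{eqv3}.} Setting $u=0$, $\lambda=k$ in \eqref{eq:vtilde2}, the first term on the right-hand side is already in the required closed form, so the task reduces to bounding the double sum over $q\equiv 0\pmod{p}$ with $(q/p,p)=1$ and over $n\ge 1$. Apart from the extra prefactor $p^{u+v-1/2}$ and the arithmetic weights $f_{\pm}(n)$ in place of $e(\pm n^{*}lq^{-1})$, this sum has exactly the same analytic shape as the right-hand side of \eqref{eq:vnuv} that produced $V_{p}(0,v,k)$. Since $|f_{\pm}(n)|\le 1$ by \eqref{fineq} and the argument of lemma \ref{lem:estv} used only the trivial bound on the $n$-weights together with the estimates for $I_{\pm}$ from lemmas \ref{lem:Ilargex} and \ref{lem:Ismallx}, that proof transfers verbatim. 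The leftover prefactor $p^{u+v-1/2}$ specialized to $u=0$, $v=it$ has modulus $p^{-1/2}$, which is precisely the gain stated in the error term of \eqref{eqv3}.

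\emph{Step 2: Proof of \eqref{eq:slp}.} Combining \eqref{sumsp}, \eqref{sumsp1} and \eqref{sumsp2} at $u=0$ yields
\[
S(l,0,v;p)=\frac{1}{l^{1/2+v}}+2\pi i^{2k}V_{p}(0,v,k)-\frac{2\pi i^{2k}}{p^{1/2+v}}\widetilde{V}_{p}(0,v,k).
\]
Substituting \eqref{eqv3} and collecting the powers of $p$ via $p^{-(1/2+v)}\cdot p^{-1}\cdot p^{(1-2v)/2}=p^{-1-2v}$, together with the identity $\bigl(\sqrt{p}/(2\pi\sqrt{l})\bigr)^{1-2v}=p^{1/2-v}(2\pi)^{2v-1}l^{v-1/2}$, produces the explicit second main term $\frac{2\pi i^{2k}}{(2\pi)^{1-2v}}\frac{\Gamma(k-v)}{\Gamma(k+v)}\frac{l^{v-1/2}}{p^{1+2v}}$. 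The contribution of the error in \eqref{eqv3} carries an additional factor $p^{-(1/2+v)}$, and since $|p^{-v}|=1$ for $v=it$, this term is $O(p^{-1}V_{p}(0,v,k))$, which is absorbed into the $O(V_{p}(0,v,k))$ already coming from $S_{1}(l,0,v;p)$.

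The main obstacle will be the verification in Step 1 that lemma \ref{lem:estv} indeed transfers without loss to the present situation: the summation set changes from all multiples of $p$ to those $q$ with $(q/p,p)=1$, and the Kloosterman-type weights are replaced by $f_{\pm}(n)$. However, the proof of lemma \ref{lem:estv} only uses the pointwise bound of these weights by one and the size of $q$, so restricting the set of $q$ and replacing the weights can only help. The rest is bookkeeping of prefactors, where one must be careful to track the factors of $2\pi$, the signs of the exponents of $p$ and $l$, and the fact that $|v|$ is purely imaginary so moduli of $p^{v}$ and $l^{v}$ equal one.
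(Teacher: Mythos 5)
Your proposal is correct and follows the same route as the paper: analytic continuation of \eqref{eq:vtilde2} to $u=0$, $\lambda=k$, with the tail estimated as in lemma \ref{lem:estv} using $|f_{\pm}(n)|\le 1$ (the prefactor $(2\pi p)^{u+v-1/2}$ versus $(2\pi)^{u+v-1/2}$ in \eqref{eq:vnuv} accounting for the $p^{-1/2}$ saving), followed by substitution into \eqref{sumsp}, \eqref{sumsp1}, \eqref{sumsp2}. Your bookkeeping of the powers of $p$, $l$ and $2\pi$ reproduces the stated main term exactly, so nothing is missing.
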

\begin{proof}
First, we make analytic continuation to  $u=0,\lambda=k\geq2.$
The inequality \eqref{fineq} allows  estimating the series over $n$ in \eqref{eq:vtilde2}
similarly to lemma \ref{lem:estv}. This proves \eqref{eqv3}.
Equation \eqref{eq:slp} follows from \eqref{sumsp}, \eqref{sumsp1}, \eqref{sumsp2} and \eqref{eqv3}.
\end{proof}

\section{Mollification at the center of the critical strip}\label{section6}
Using the technique of mollification and theorems \ref{thm:main2}, \ref{thm:Bet}, \ref{thm:BF}  we prove a lower bound on the proportion of non-vanishing $L$-functions at the critical point $1/2$. Note that due to the uniformity of our results in $t$-aspect a positive proportion of non-vanishing $L$-values can also be established at any point on the critical line $1/2+it$ such that $|t|<N$.

We choose the standard mollifier of the form
\begin{equation}
 X(f)=\sum_{l \leq L}\frac{x_l\lambda_f(l)}{\sqrt{l}},
\end{equation}
where $(x_l)$ are real coefficients and $L=N^{\Omega}$ is called the length of mollifier.
Then
\begin{equation}
M_{1}^h=\sum_{f \in H_{2k}^{*}(N)}^{h}L_f(1/2)X(f)=\sum_{l \leq L}\frac{x_l}{\sqrt{l}}M_1(l,0,0)
\end{equation}
and
\begin{equation}
M_{2}^h=\sum_{f \in H_{2k}^{*}(N)}^{h}L^{2}_{f}(1/2)X^2(f)=\sum_{\substack{d \leq L\\ l_1d \leq L\\ l_2d \leq L}}\frac{x_{dl_1}x_{dl_2}}{d\sqrt{l_1l_2}}M_2(l_1l_2,0,0).
\end{equation}
Theorems  \ref{thm:main2}, \ref{thm:Bet}, \ref{thm:BF}  imply that any $\Omega<1$ is admissible for the first moment and $\Omega<1/2$ for the second moment. Therefore, the length of mollifier can be taken up to $N^{1/2}$.
Following \cite{R2} we set
\begin{equation}
x_l=\sum_{lm\leq N^{\Omega}}\frac{\mu*\mu(m)}{m}y_{lm},
\end{equation}
where
\begin{equation}
y_n=\begin{cases}
n\mu(n)/\phi(n)&  \text{ if }n\leq N^{\Omega} \text{ and }p\not{|}n,\\
0 & \text{ otherwise.}
\end{cases}
\end{equation}

By Cauchy-Schwartz inequality (see \cite{R2} for details)
\begin{equation}
 \sum_{\substack{f \in H_{2k}^{*}(N)\\ L_f(1/2)\neq 0}}1 \geq \frac{(M_{1}^h)^2}{M_{2}^{h}}
\geq \frac{p-1}{p}\frac{\Omega}{1+2\Omega}-\epsilon.
\end{equation}
Taking $\Omega=1/2-\epsilon$ as $N\rightarrow \infty$ we prove theorem \ref{thm:main}.
\section*{Funding}
The work of Olga Balkanova was supported by the Russian Science Foundation under grant $14-11-00335$ and performed at the Institute for Applied Mathematics of Russian Academy of Sciences.

The work of Dmitry Frolenkov  was supported by the Russian Science Foundation under grant 14--50--00005 and performed in Steklov Mathematical Institute of Russian Academy of Sciences.

\section*{Acknowledgments}
The authors thank Sandro Bettin for extending his result \cite{Bet} to the case of prime powers  for our application.

\nocite{}


\begin{thebibliography}{}
\bibitem{Ak}
\newblock
A. Akbary, \emph{Non-vanishing of weight $k$ modular $L$-functions with large level}, J. Ramanujan Math. Soc. $14$  ($1999$), no. $1$, $37-54$.
\bibitem{BalFrol}
\newblock
O. Balkanova, D. Frolenkov, \emph{A uniform asymptotic formula for the second moment of primitive $L$-functions on the critical line}, Proceedings of the Steklov Institute of Mathematics, to appear.
\bibitem{Bet}
\newblock
S. Bettin, \emph{The first moment of twisted Hecke $L$-functions with unbounded shifts}, preprint ($2015$).
\bibitem{Byk}
\newblock
V. A. Bykovskii,  \emph{A trace formula for the scalar product of {H}ecke series and its applications },
Zap. Nauchn. Sem. S.-Peterburg. Otdel. Mat. Inst. Steklov.(POMI) $226$ ($1996$), $14-36$.
\bibitem{BF}
\newblock
V. A. Bykovskii, D.A. Frolenkov, \emph{Asymptotic formulas for the second moments of $L$-series associated to holomorphic cusp forms on the critical line},  preprint ($2015$), $1-28$.
\bibitem{BF2}
\newblock
V. A. Bykovskii, D.A. Frolenkov, \emph{On the Second Moment of $L$-Series
of Holomorphic Cusp Forms on the Critical Line},  Doklady Mathematics $92$ ($2015$), No. $1$, $1-4$.

\bibitem{Duk}
\newblock
W. Duke, \emph{ The critical order of vanishing of automorphic $L$-functions with large level}, Invent.
Math. $119$ ($1995$), no. $1$, $165-174$.

\bibitem{Ell}
\newblock
J.S. Ellenberg, \emph{On the error term in Duke\textsc{\char13}s estimate for the average special value of $L$-
functions}, Canad. Math. Bull. $48$ ($2005$),no. $4$, $535-546$.

\bibitem{F}
\newblock
G.~M.~Fichtenholz, \emph{Differential and integral calculus}, vol.~$2$.
\bibitem{GR}
\newblock
I.~S.~Gradshteyn, and I.~M.~Ryzhik, \emph{Table of Integrals, Series, and Products}, $6$th ed., Academic Press, Boston $(2000)$.

\bibitem{Ich}
\newblock
Y. Ichihara, \emph{The first moment of $L$  -functions of primitive forms on $\Gamma_0(p^{\alpha})$
and a basis of old forms}, J. Number Theory $131$ ($2011$), No. $2$ , $343-362$.
\bibitem{IS}
\newblock
H. Iwaniec, P. Sarnak,  \emph{The non-vanishing of central values of automorphic L-functions and Landau-Siegel zeros},  Israel Journal of Math. $120$ ($2000$), $155-177$.


\bibitem{JK}
\newblock
J. Jackson, A. Knightly, \emph{Averages of twisted $L$-functions}, J. Aust. Math. Soc. $99$ ($2015$), no. $2$, $207-236$.

\bibitem{Kam}
\newblock
Y. Kamiya, \emph{Certain mean values and non-vanishing of automorphic $L$-functions with large level}, Acta Arith. $93$ ($2000$), no. $2$, $157-176$.
\bibitem{KM}
\newblock
E. Kowalski, P. Michel, \emph{ The analytic rank of $J_0 (q)$ and zeros of automorphic $L$-functions},
Duke Math. J. $100$ ($1999$), no. $3$, $503-542$.
\bibitem{Kuz}
\newblock
N. V. Kuznetsov, \emph{Convolution of the Fourier coefficients of Eisentein-Maass series}, Automorphic functions and number theory. Part I, Zap. Nauchn. Sem. LOMI $129$ ($1983$), Nauka, Leningrad. Otdel., Leningrad,  $43-84$.
\bibitem{HMF}
\newblock
F.W.J.~Olver , D.W.~Lozier, R.F.~Boisvert and C.W.~Clarke, \emph{{NIST} {H}andbook of {M}athematical {F}unctions}, Cambridge University Press, Cambridge $(2010)$.
\bibitem{R0}
\newblock
D. Rouymi, \emph{Formules de trace en niveau primaire et non annulation de valeurs centrales de fonctions L automorphes}, PhD thesis, Universit\'{e} Henri Poincar\'{e} ($2009$).
\bibitem{R}
\newblock D. Rouymi, \emph{Formules de trace et non annulation de fonctions $L$ automorphes au niveau $p^v$ }, Acta Arith. $147$ ($2011$), $1-32$.
\bibitem{R2}
\newblock D. Rouymi, \emph{Mollification et non annulation de fonctions L automorphes en niveau primaire},  J. Number Theory $132$ ($2012$), No. $1$ , $79-93$.
\bibitem{Royer}
\newblock
E. Royer, \emph{Sur les fonctions L de formes modulaires}, PhD thesis, Universit\'{e} de Paris-Sud $(2001)$.
\bibitem{Van}
\newblock
J.M. VanderKam,  \emph{The rank of quotients of $J_0 (N )$}, Duke Math. J. $97$ ($1999$), no. $3$, $545-577$.

\end{thebibliography}
\end{document}